\providecommand{\algorithmname}{Algorithm}
\theoremstyle{plain}
\newtheorem{thm}{\protect\theoremname}
\theoremstyle{definition}
\newtheorem{defn}[thm]{\protect\definitionname}
\theoremstyle{plain}
\newtheorem{cor}[thm]{\protect\corollaryname}
\theoremstyle{remark}
\newtheorem{rem}[thm]{\protect\remarkname}
\newenvironment{lyxcode}
{\par\begin{list}{}{
\setlength{\rightmargin}{\leftmargin}
\setlength{\listparindent}{0pt}% needed for AMS classes
\raggedright
\setlength{\itemsep}{0pt}
\setlength{\parsep}{0pt}
\normalfont\ttfamily}%
 \item[]}
{\end{list}}
\theoremstyle{plain}
\newtheorem{lem}[thm]{\protect\lemmaname}
\providecommand{\corollaryname}{Corollary}
\providecommand{\definitionname}{Definition}
\providecommand{\lemmaname}{Lemma}
\providecommand{\remarkname}{Remark}
\providecommand{\theoremname}{Theorem}
\title{Orthogonal Matching Pursuit with Noisy and Missing Data: Low and High Dimensional Results}
\author{
Yudong Chen \\
Department of Electrical and Computer Engineering \\
The University of Texas at Austin\\
Austin, TX 78712 \\
\texttt{ydchen@utexas.edu} \\
\And
Constantine Caramanis \\
Department of Electrical and Computer Engineering \\
The University of Texas at Austin\\
Austin, TX 78712 \\
\texttt{caramanis@mail.utexas.edu} \\
}
\begin{document}

\maketitle

\begin{abstract}
Many models for sparse regression typically assume that the covariates are known completely, and without noise. Particularly in high-dimensional applications, this is often not the case. This paper develops efficient OMP-like algorithms to deal with precisely this setting. Our algorithms are as efficient as OMP, and improve on the best-known results for missing and noisy data in regression, both in the high-dimensional setting where we seek to recover a sparse vector from only a few measurements, and in the classical low-dimensional setting where we recover an unstructured regressor. In the high-dimensional setting, our support-recovery algorithm {\it requires no knowledge} of even the statistics of the noise. Along the way, we also obtain improved performance guarantees for OMP for the standard sparse regression problem with Gaussian noise.
\end{abstract}

\section{Introduction}

Sparse Linear Regression, also popularly known as compressed sensing, deals with the problem of recovering a sparse vector from linear projections. These projections typically represent a physical measurement process, and as such, are often subject to noisy, missing or corrupted data. Standard algorithms, including popular approaches such as $\ell^1$-penalized regression, known as LASSO, are not equipped to deal with incomplete or noisy measurements. Not surprisingly, blindly running such algorithms on corrupted data gives solutions of significantly compromised quality, and indeed, we provide some computational results corroborating precisely this natural expectation. 

Recently, attention has turned to large-scale settings, where the data sets can be very large, and high-dimensional. Significantly, data collection in these settings can be even further prone to missing, noisy or corrupted data. In the high-dimensional setting, in particular, problems of prediction and inference critically hinge on correct identification of a low-dimensional structure -- sparsity, in the regression setting. Thus, algorithms that provably provide correct subset recovery are important, in addition to providing guarantees on $\ell^2$-error. Finally, the push towards large-scale learning calls for stable, simple and computationally efficient algorithms.

This paper focuses on precisely this problem. We present simple algorithms, no more complicated than the fastest algorithms run on clean (meaning, no additional noise, and no missing variables) covariates, whose statistical performance is as good as or better than any method known to us for noisy regression in the high dimensional setting, and in the low-dimensional setting. Indeed, the two main settings we consider are the regime where we have more observations than dimensions, but the signal (the regressor) exhibits no special structure such as sparsity (the ``classical'' or ``low-dimensional'' regime), and then the high-dimensional setting where dimensionality far outnumbers the number of measurements, but the signal is sparse (the standard compressed sensing setup). We describe our setting and assumptions in detail below, as well as provide a brief summary of recent work done in this area.

In the high-dimensional setting, our algorithm is  a greedy (OMP-like) algorithm. We provide conditions under which this algorithm is guaranteed (with high probability) to identify the correct support of the regressor. Interestingly, our support recovery algorithm requires no knowledge of the statistics of the corruption, in contrast with all other work we are aware of. Once the support has been identified, the problem reduces to one from the classical regime, since in the typical high dimensional scaling for compressed sensing, we have sparsity $k$ and $n = k \log p$ samples. This reduction to the low-dimensional setting turns out to be critical for both computational complexity, as well as stability and statistical performance. The central challenge in regression with noisy or otherwise corrupted covariates, $X$, is in estimating the covariance matrix $X^{\top}X$. This is exacerbated in the high-dimensional setting, where natural estimates may not even be positive semidefinite. In our setting, on the other hand, we estimate the support without requiring use of the covariance estimate, and hence avoid computational issues of non-convexity by moving directly to the low-dimensional setting. 

\subsection*{Related Work}
The problem of regression with noisy or missing covariates, in the high-dimensional regime, has recently attracted some attention, and several authors have considered this problem and made important contributions. Stadler and Buhlmann \cite{stadler2010missing} developed an EM algorithm to cope with missing data. While effective in practical examples, there does not seem to be a proof guaranteeing global convergence.  Recent
work has considered adapting existing approaches for
sparse regression with good theoretical properties to
handle noisy/missing data. The work by Rosenbaum and Tsybakov~\cite{rosenbaum2010sparse,rosenbaum2011improved} is among the first to obtain theoretical guarantees. They propose using a modified version of the Dantzig selector (they called it the MU selector) as follows. Letting $\mathbf{y} = X \beta + \mathbf{e}$, and $Z = X + W$ denote the noisy version of the covariates (we define the setup precisely, below), the standard Dantzig selector would minimize $\|\beta\|_1$ subject to the condition $\| Z^{\top}(\mathbf{y} - Z \beta)\|_{\infty} \le \tau$. Instead, they solve $\|  Z^{\top}(\mathbf{y} - Z \beta) + \mathbb{E}[W^{\top}W] \beta\|_{\infty} \le \mu\|\beta\|_1 + \tau$, thereby adjusting for the (expected) effect of the noise, $W$. Loh and Wainwright \cite{loh2011nonconvex} pursue a related approach, where they modify Lasso rather than the Dantzig selector. Rather than minimize $\|Z \beta - \mathbf{y}\|_2^2 + \lambda \|\beta\|_1$, they instead minimize a similarly adjusted objective: $\beta^{\top}(Z^{\top}Z - \mathbb{E}[W^{\top}W])\beta - 2\beta^{\top}Z^{\top}\mathbf{y} + \|\mathbf{y}\|_2^2 + \lambda \|\beta\|_1$. In this sense, their work is related to work by Xu and You~\cite{xu2007eiv} who consider a similar estimator but for noisy-regression in the low-dimensional setting. The modified Dantzig selector can be computed by solving a convex program. The modified Lasso formulation becomes non-convex. Interestingly, Loh and Wainwright show that the projected gradient descent algorithm finds a possibly local optimum that nevertheless has strong performance guarantees.

These methods obtain similar $\ell^2$-performance bounds, and recent work \cite{loh2012minimax} shows they are minimax optimal. Significantly, they both rely on knowledge of the covariance matrix $\mathbb{E}[W^{\top}W]$ of the  noise on the covariates.\footnote{The earlier work \cite{rosenbaum2010sparse} does not require that, but it does not guarantee support recovery, and its $ \ell_2 $ error bounds are weaker than the more recent approaches in~\cite{rosenbaum2011improved,loh2011nonconvex} that use $\mathbb{E}[W^{\top}W]$.} As our simulations demonstrate, this dependence seems critical: if the variance of the noise is either over- or under-estimated, the performance of the algorithms, even for support recovery, deteriorate considerably. The simple variant of the Orthogonal Matching Pursuit (OMP) algorithm we analyze requires no such knowledge for support recovery. Moreover, if $\mathbb{E}[W^{\top}W]$ is available, our algorithm has $\ell^2$-performance matching that in \cite{rosenbaum2011improved, loh2011nonconvex}.

Computationally, the methods mentioned above are more demanding than the simplest greedy methods that have proven theoretically and empirically successful in the clean-covariate case, i.e., when we have noiseless access to all the covariates. OMP~\cite{tropp2004greed,tropp2007OMP,davenport2010OMPRIP} is one of the greedy methods that have proven remarkably effective. These methods, however, have not been extended to the noisy or missing covariates case. Moreover, to the best of our knowledge, even the clean covariate case of sparse regression does not have a complete analysis for OMP under the noisy setting where measurements (response variables) are received with additive Gaussian noise. Many papers (e.g., \cite{donoho2006stable} or even more recent papers, e.g., \cite{JainTewariDhillonOMPR}) consider deterministic ($\ell^2$- or $\ell^{\infty}$-bounded) noise, and then obtain results for Gaussian noise as a corollary; however these results seem to be weaker than required. The work in \cite{FletcherRangan2009} considers the high-SNR case, so it is not clear that one can make use of the results there.

\subsection*{Contributions}
The present work develops a greedy OMP-like algorithm for the noisy case with noise: the setting where the measurements we receive have additive Gaussian noise, and moreover the version of the covariate (or sensing) matrix we get to see, has either additive noise or random erasures. Our algorithm is as efficient as standard OMP algorithms, and in the case of independent columns, our results are at least as good or better than any results available by any method known to us. While we conjecture that our results can be extended to the setting where the columns are not independent, and the sparsity is not known explicitly, we do not pursue this here. Specifically, the contributions of this paper are as follows:

\begin{enumerate}
\item Low-dimensional regime: For the case where the number of measurements, $n$, exceeds the dimensionality of the regressor, $k$, we design simple estimators for both the case of noisy covariates, and missing covariates. Our estimators are based on either knowledge of the statistics of the covariate noise, knowledge of the statistics of the covariate distribution, or knowledge of an Instrumental Variable correlated with the covariates. For both the case of missing and noisy data, we provide {\it finite sample} performance guarantees that are as far as we know, the best available.\footnote{Xu and You \cite{xu2007eiv} have shown asymptotic performance guarantees for some of the estimators we use, but have no finite sample results.} In the case of Instrumental Variables, we are not aware of any rigorous non-asymptotic  results.

Finally, we note that our results for the low-dimensional setting require no assumptions on the independence of columns of the covariate matrix.

\item High-dimensional regime: Next, we consider the standard high-dimensional scaling, when the regressor is $k$-sparse, but of dimension $p$, where $p$ greatly outnumbers the available measurements, $n$. We develop an iterative OMP-like algorithm. We give conditions for exact support recovery in the missing and noisy covariate setting, and provide $\ell^2$ error bounds for the regressor. For the case of independent columns, our results improve on  past results, to the best of our knowledge, both in terms of computational speed and performance. Interestingly, in the noisy $X$ setting, our support recovery algorithm {\it requires no knowledge of the statistics of the noise}; thus, our results imply that we have distributionally-robust support recovery. As far as we know (see also our simulations in Section \ref{sec:simulations}) other algorithms require some knowledge of the corruption statistics for support recovery.

\item In simulations, the advantage of our algorithm seems more pronounced, both in terms of speed, and in terms of statistical performance. Moreover, while we provide no analysis for the case of correlated columns of the covariate matrix, our simulations indicate that the impediment is in the analysis, as the results for our algorithm seem very promising. 

\item Finally, as a corollary to our results above, setting the covariate-noise-level to zero, we obtain bounds on the performance of OMP in the standard setting, with additive Gaussian measurement noise. Our bounds are better than bounds obtained by specializing deterministic results (for, e.g., $\ell^2$-bounded noise as in \cite{donoho2006stable}) and ignoring Gaussianity; meanwhile, while similar to the results in \cite{cai2011omp}, there seem to be gaps in their proof that we do not quite see how to fill.
\end{enumerate}

\subsection*{Paper Outline}
The outline of the paper is as follows. We first consider the low-dimensional or classical statistical setting in Section \ref{sec:low}. Here, the number of samples exceeds the dimensionality of the regressor. While important in its own right, this is critical for an OMP-based approach to sparse regression, since once the greedy approach has determined the (sparse) support set, the resulting problem is indeed a low-dimensional regression problem. Section \ref{sec:high} introduces the high-dimensional setting, and our OMP-like algorithm. Here we state the main results for this regime. Section \ref{sec:proofs} contains the proofs of our main results. Section \ref{sec:simulations} illustrates the performance and advantages of our algorithm empirically, and compares performance to other methods. 

\section{Problem Setup }

The main focus of this paper is on the high-dimensional setting. However, an important intermediate point is the consideration of the low-dimensional regime, since once the support of the sparse regressor has been identified, estimating those non-zero coefficients amounts to a low-dimensional problem. We thus define the setup in both settings.

We denote our unknown regressor (or signal) as $\beta^{*}$. In the low-dimensional setting, we have $\beta^{\ast} \in \mathbb{R}^k$, where as in the high dimensional setting, $\beta^{\ast} \in\mathbb{R}^{p}$ is an unknown $k$-sparse vector. For $i=1,\ldots,n$, we obtain measurement $y_{i}\in\mathbb{R}$ according to the linear model
$$
y_{i}=\left\langle \mathbf{x}_{i},\beta^{*}\right\rangle +e_{i}, \quad i=1,\dots,n.
$$
Here, $\mathbf{x}_{i}$ is the covariate vector of appropriate dimension ($\mathbf{x}_i \in \mathbb{R}^k$ for the low-dimensional case, and in $\mathbb{R}^p$ in the high dimensional case), and $e_{i}\in\mathbb{R}$ is additive error.

The standard setting assumes that each covariate vector $\mathbf{x}_i$ is known directly, and exactly. Instead, here we assume we only observe a vector
$\mathbf{z}_i \in \mathbb{R}^{k}$ (or $\mathbb{R}^p$) which is linked to $\mathbf{x}_i$ via some distribution. We focus on two cases:
\begin{enumerate}
\item Covariates with additive noise: We observe $\mathbf{z}_i=\mathbf{x}_i+\mathbf{w}_i$, where $\mathbf{w}_i \in \mathbb{R}^{k}$ (or $\mathbb{R}^p$) is the noise.

\item Covariates with missing data: We consider the case where the entries
of $\mathbf{x}_i$ are observed independently with probability $1-\rho$,
and missing with probability $\rho$. In particular, we assume the
following model: 
\[
\left(\mathbf{z}_i\right)_{j}=\begin{cases}
\left(\mathbf{x}_i\right)_{j} & w.p.\;(1-\rho)\\
0 & w.p.\;\rho
\end{cases},\quad\forall i,j.
\]

\end{enumerate}

Thus, in matrix notation, we have
$$
\mathbf{y} = X \beta^* + \mathbf{e},
$$
and we get to see: $(\mathbf{y},Z)$, where $Z = X + W$ in the noisy case, and $Z$ is the entry-erased version of $X$ in the missing data case. Thus, given $\{\mathbf{z}_i\}$ and $\{y_{i}\}$, we seek to estimate the unknown vector $\beta^{*}$. A task of particular importance in the high-dimensional setting is to estimate the support of $\beta^{\ast}$.
We use $\mathbf{x}_i$, $\mathbf{z}_i$ and $\mathbf{w}_i$ to denote the $i$th row of $X$, $Z$ and $W$,
respectively, and $X_i$, $Z_i$ and $W_i$ to denote the $i$th column.

In this paper, we consider the case where both the covariate matrix $X$ and the noise $ W $ and $\mathbf{e}$ are sub-Gaussian. We give the basic definitions here, as these are used throughout.
\begin{defn}
Sub-Gaussian Variable: A zero-mean variable $v$ is sub-Gaussian with parameter $\sigma_v > 0$ if for all $ \lambda \in \mathbb{R} $,
$$
\mathbb{E}[\exp(\lambda v)] \leq \exp(\sigma_v^2 \lambda^2 / 2).
$$
\end{defn}
\begin{defn}
Sub-Gaussian Matrix: A zero-mean matrix $V$ is called sub-Gaussian
with parameter $(\frac{1}{n}\Sigma,\frac{1}{n}\sigma^{2})$ if both of the following are
satisfied:
\begin{enumerate}
\item Each row $\mathbf{v}_{i}^{\top}\in\mathbb{R}^{p}$ of $V$ is sampled independently
and has $\mathbb{E}\left[\mathbf{v}_{i}\mathbf{v}_{i}^{\top}\right]=\frac{1}{n}\Sigma$.%
\footnote{The $\frac{1}{n}$ factor is used to simplify subsequent notation;
no generality is lost.
}
\item For any unit vector $\mathbf{u} \in \mathbb{R}^{p}$, $\mathbf{u}^{\top} \mathbf{v}_{i}$ is a sub-Gaussian
random variable with parameter at most $\frac{1}{\sqrt{n}}\sigma$.
\end{enumerate}
\end{defn}
Note that the second parameter $ \frac{1}{n} \sigma^2 $ is an upper bound.

In Section \ref{sec:low}, we need only a general sub-Gaussian assumption in order to guarantee our results. Thus we define:
\begin{defn}
Sub-Gaussian Design Model: We assume $X$, $W$ and $\mathbf{e}$ are sub-Gaussian
with parameters $(\frac{1}{n}\Sigma_{x},\frac{1}{n})$, $(\frac{1}{n}\Sigma_{w},\frac{1}{n}\sigma_w^2)$ and
$(\frac{1}{n}\sigma^{2},\frac{1}{n}\sigma_{e}^{2})$, respectively. We assume they are independent of each other.
\end{defn}

For our analytical results in the high-dimensional setting, we currently require independence across entries for $X$, $W$ and $\mathbf{e}$. Thus we define:
\begin{defn}
Independent Sub-Gaussian Design Model: We assume $X$, $W$ and $\mathbf{e}$ have zero-mean, independent and sub-Gaussian entries. The entries of $X$, $W$, and $\mathbf{e}$ have parameter $\frac{1}{n}$, $\frac{1}{n}\sigma_w^2$ and
$\frac{1}{n}\sigma_{e}^{2}$, respectively.
\end{defn}

Finally, we discuss our notation. In this paper, for the simplicity of exposition of the results but also of the analysis, we disregard constants that do not scale with $k$, $n$ or $p$ or any relevant variance $\sigma^2$. Thus, for example, writing $n \gtrsim f(k,p,\sigma)$ means  $ n\gtrsim cf(k,p,\sigma) $ for a positive universal constant $ c $ that does not scale with $k$, $n$, $p$, or $\sigma$.

The results we present in the sequel all hold with high probability (w.h.p.). By this, we mean with probability at least $1-C_{1}p^{-C_{2}}$, for positive constants $C_1$, $C_2$ independent of $n$, $p$, $k$ and $\sigma$ (i.e., all relevant variance quantities).

\section{The Low-Dimensional Problem}\label{sec:low}

We first consider the low-dimensional version of the problem where $\beta^{\ast} \in \mathbb{R}^k$, with
$k \ll n$. As noted above, in the high-dimensional sparse-regression setting, once we know the support of $\beta^{*}$, this is precisely the resulting problem. When $k \ll n$, the problem is strongly convex, and in the clean-covariate setting where we know $X$ exactly and completely, the solution is given by the standard least-square estimator:
\begin{eqnarray}
\hat{\beta} & = & (X^{\top}X)^{-1}X^{\top} \mathbf{y}=\arg\min_{\beta}\beta^{\top}(X^{\top}X)\beta-2 \mathbf{y}^{\top}X\beta. \label{eq:stdestimate}
\end{eqnarray}
In this setting, well-known results establish, among other measures of closeness to $\beta^{\ast}$, the following:
\begin{thm}[\cite{meinshausenyu2009lasso}] Suppose that (according to the sub-Gaussian design model defined above) $X$ is sub-Gaussian with parameters $(\frac{1}{n}\Sigma_x,\frac{1}{n})$, and the noise vector $\mathbf{e}$ is sub-Gaussian with parameters $(\frac{1}{n}\sigma^2,\frac{1}{n} \sigma_e^2)$. Moreover, suppose that $n \gtrsim \frac{k \log p}{\lambda^2_{\min}(\Sigma_x)}$. Then with high probability, the estimator above satisfies:
$$
\|\hat{\beta} - \beta^{\ast}\|_2 \lesssim \frac{\sigma_e}{\lambda_{\min}(\Sigma_x)} \sqrt{\frac{k \log p}{n}}.
$$
\end{thm}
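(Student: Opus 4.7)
The plan is to start from the closed-form residual $\hat{\beta}-\beta^{*} = (X^{\top}X)^{-1} X^{\top}\mathbf{e}$, which follows immediately from substituting $\mathbf{y}=X\beta^{*}+\mathbf{e}$ into \eqref{eq:stdestimate}. Taking operator-norm bounds this gives
\[
\|\hat{\beta}-\beta^{*}\|_{2} \;\leq\; \Bigl\| \bigl(\tfrac{1}{n}X^{\top}X\bigr)^{-1}\Bigr\|_{\mathrm{op}} \cdot \Bigl\| \tfrac{1}{n}X^{\top}\mathbf{e}\Bigr\|_{2}.
\]
So the whole argument reduces to (i) a lower bound on $\lambda_{\min}(\tfrac{1}{n}X^{\top}X)$ and (ii) an upper bound on $\|\tfrac{1}{n}X^{\top}\mathbf{e}\|_{2}$.

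For step (i), I would invoke a standard sub-Gaussian sample-covariance concentration result (e.g.\ Vershynin-style net argument or Rudelson--Zhou). Since the rows of $X$ are independent sub-Gaussian with $\mathbb{E}[\mathbf{x}_{i}\mathbf{x}_{i}^{\top}]=\tfrac{1}{n}\Sigma_{x}$ and sub-Gaussian parameter $\tfrac{1}{\sqrt{n}}$, a covering of the unit sphere in $\mathbb{R}^{k}$ together with Bernstein-type tail bounds yields
\[
\Bigl\|\tfrac{1}{n}X^{\top}X - \Sigma_{x}\Bigr\|_{\mathrm{op}} \;\lesssim\; \sqrt{\tfrac{k+\log p}{n}}
\]
with probability at least $1-C_{1}p^{-C_{2}}$. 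Under the scaling $n \gtrsim k\log p/\lambda_{\min}^{2}(\Sigma_{x})$, this deviation is at most $\lambda_{\min}(\Sigma_{x})/2$, so by Weyl $\lambda_{\min}(\tfrac{1}{n}X^{\top}X)\geq \lambda_{\min}(\Sigma_{x})/2$ and the operator norm of the inverse is $\lesssim 1/\lambda_{\min}(\Sigma_{x})$.

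For step (ii), I would exploit the independence of $\mathbf{e}$ and $X$. Conditional on $X$, the vector $\tfrac{1}{n}X^{\top}\mathbf{e}$ is sub-Gaussian in $\mathbb{R}^{k}$ with covariance proxy $\tfrac{\sigma_{e}^{2}}{n^{2}}X^{\top}X$, whose operator norm is $\lesssim \sigma_{e}^{2}\lambda_{\max}(\Sigma_{x})/n$ on the same high-probability event as above. A standard $\epsilon$-net argument over the unit sphere in $\mathbb{R}^{k}$ (or equivalently, coordinate-wise Hoeffding for $\tfrac{1}{n}\langle X_{j},\mathbf{e}\rangle$ plus a union bound over $j=1,\dots,k$) then gives, with the desired $p^{-C_{2}}$ probability,
\[
\Bigl\|\tfrac{1}{n}X^{\top}\mathbf{e}\Bigr\|_{2} \;\lesssim\; \sigma_{e}\sqrt{\tfrac{k\log p}{n}}.
\]
Multiplying the two bounds gives the stated rate. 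The main obstacle is really just being careful with the probability budget: both the covariance concentration and the noise concentration must hold simultaneously with probability $1-C_{1}p^{-C_{2}}$, which is what forces the $\log p$ factor (even though the problem is nominally $k$-dimensional) and dictates the sample complexity $n \gtrsim k\log p/\lambda_{\min}^{2}(\Sigma_{x})$. Everything else is standard sub-Gaussian bookkeeping.
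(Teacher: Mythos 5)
Your argument is correct; note, though, that the paper does not actually prove this statement -- it is quoted from \cite{meinshausenyu2009lasso} -- so the relevant comparison is with the paper's own machinery, of which your proof is essentially the $\sigma_w=0$ specialization: your step (i) is Lemma \ref{lem:restricted_e} applied with $\lambda=\lambda_{\min}(\Sigma_x)$, your step (ii) is Lemma \ref{lem:Sigma_beta} plus $\|\cdot\|_2\le\sqrt{k}\|\cdot\|_\infty$, and your closed-form decomposition $\hat{\beta}-\beta^*=(X^\top X)^{-1}X^\top\mathbf{e}$ is equivalent, for this quadratic objective, to the basic-inequality argument of Theorem \ref{prop:lowD_general}. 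The only bookkeeping slip is the normalization: under the paper's convention each row of $X$ has covariance $\frac{1}{n}\Sigma_x$, so $\mathbb{E}[X^\top X]=\Sigma_x$ already and the concentration should be stated for $X^\top X-\Sigma_x$ rather than $\frac{1}{n}X^\top X-\Sigma_x$; this does not affect the substance of the proof.
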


The challenge in our setting is that we know only $Z$ (a noisy or partially deleted version of $X$), and hence cannot solve for $\hat{\beta}$. Some knowledge of $X$ or of the nature of the corruption ($W$ in the case of additive noise) is required in order to proceed. For the case of additive noise, we consider three models for {\it a priori} knowledge of $X$ or of the noise. For the case of partially missing data, we assume we know the erasure probability (easy to check directly). For additive noise, the models we use are as follows.
\begin{enumerate}
\item Noise Covariance: in this case, we assume we either know or somehow can estimate the noise covariance, $\Sigma_w = \mathbb{E} \left[W^{\top}W\right]$. We note that this is a typical assumption, e.g., \cite{loh2011nonconvex,xu2007eiv}. We also give results for when we can only {\it conservatively} estimate $\Sigma_w$. We are unaware of other such results. We discuss this further below.
\item Covariate Covariance: in this case, we assume that we either know or somehow can estimate the covariance of the true covariates, $\Sigma_{x}=\mathbb{E}\left[X^{\top}X\right]$. This assumption does not seem to be as common as the previous one, although it seems equally plausible to have an estimate of $\mathbb{E}\left[X^{\top}X\right]$ as of $\mathbb{E} \left[W^{\top}W\right]$.
\item Instrumental Variables: in this setting, we assume there are variables $U \in \mathbb{R}^{n\times m}$ with $m \ge k$, whose
rows are correlated with the rows of $X$, but independent of $W$ and $\mathbf{e}$, and that the realization of $U$ is known or can be estimated. Instrumental variables are common in the econometrics literature \cite{fuller1987measurementerror,carroll2006measurementerror}, and are often used when $X$ is not available. To the best of our knowledge, no rigorous non-asymptotic results are available when one has a noisy or partially erased version of the covariate matrix $X$.
\end{enumerate}
We note in this section, our results require no assumptions on the independence of the columns of $X$, $W$, or, therefore, of $Z$; that is, we assume we operate under the sub-Gaussian design model. As we discuss in more detail in Section \ref{sec:high}, our subset selection algorithm is iterative, and empirically works well for correlated or independent columns, although currently our analytical results (performance guarantees) do require this independence assumption, and hence the guarantees hold for the independent sub-Gaussian model.

Let us generically denote by $\hat{\Sigma}$ the estimator for $X^{\top}X$, and by $\hat{\gamma}$ our estimate for $X^{\top} \mathbf{y}$. The pair $(\hat{\Sigma},\hat{\gamma})$ depends on the assumption of what is known, i.e., according to the three possibilities outlined above. Thus, in place of $\hat{\beta} = (X^{\top}X)^{-1}X^{\top}\mathbf{y}$ given in (\ref{eq:stdestimate}), our proposed estimator for $\hat{\beta}$ naturally becomes:
\begin{eqnarray*}
\hat{\beta} & = & (\hat{\Sigma})^{-1}\hat{\gamma}=\arg\min_{\beta}\beta^{\top}(\hat{\Sigma})\beta-2\hat{\gamma}^{\top}\beta,
\end{eqnarray*}
where we require $ \hat{\Sigma}  $ to be positive semidefinite.
For this estimator, we have the following simple but general result. 
\begin{thm}
\label{prop:lowD_general} Suppose the following strong convexity condition holds:
\begin{eqnarray*}
\lambda_{\min}\left(\hat{\Sigma}\right) & \ge & \lambda>0.
\end{eqnarray*}
Then the estimation error satisfies 
\begin{eqnarray*}
\left\Vert \hat{\beta}-\beta^{*}\right\Vert _{2}
&\lesssim& \frac{1}{\lambda}\left\Vert \hat{\gamma}-\hat{\Sigma}\beta^{*}\right\Vert _{2}.
\end{eqnarray*}
\end{thm}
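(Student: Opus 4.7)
My plan is to treat this as an exercise in linear algebra: once we exploit the closed-form of the estimator, the bound falls out directly from the strong convexity hypothesis. There is essentially no probabilistic content in the statement; all of the heavy lifting is pushed into later results that actually control $\|\hat{\gamma} - \hat{\Sigma}\beta^*\|_2$ for each of the three choices of $(\hat{\Sigma},\hat{\gamma})$.

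First, I would observe that the hypothesis $\lambda_{\min}(\hat{\Sigma}) \ge \lambda > 0$ implies that $\hat{\Sigma}$ is invertible and the quadratic $\beta^\top \hat{\Sigma}\beta - 2\hat{\gamma}^\top \beta$ is strictly convex. Hence the unique minimizer is characterized by the first-order condition $\hat{\Sigma}\hat{\beta} = \hat{\gamma}$, i.e., $\hat{\beta} = \hat{\Sigma}^{-1}\hat{\gamma}$ is well-defined. The same hypothesis gives the operator-norm bound $\|\hat{\Sigma}^{-1}\|_{\mathrm{op}} = 1/\lambda_{\min}(\hat{\Sigma}) \le 1/\lambda$.

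Next, I would subtract $\hat{\Sigma}\beta^*$ from both sides of the normal equation to obtain
$$\hat{\Sigma}\left(\hat{\beta}-\beta^*\right) \;=\; \hat{\gamma} - \hat{\Sigma}\beta^*,$$
and then left-multiply by $\hat{\Sigma}^{-1}$ to rewrite the error as $\hat{\beta}-\beta^* = \hat{\Sigma}^{-1}\bigl(\hat{\gamma} - \hat{\Sigma}\beta^*\bigr)$. Taking $\ell^2$-norms and applying the operator-norm inequality together with the bound on $\|\hat{\Sigma}^{-1}\|_{\mathrm{op}}$ yields
$$\left\Vert \hat{\beta}-\beta^*\right\Vert_2 \;\le\; \left\Vert \hat{\Sigma}^{-1}\right\Vert_{\mathrm{op}}\left\Vert \hat{\gamma} - \hat{\Sigma}\beta^*\right\Vert_2 \;\le\; \frac{1}{\lambda}\left\Vert \hat{\gamma} - \hat{\Sigma}\beta^*\right\Vert_2,$$
which is the claim.

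There is no genuine obstacle in this proof, which is why the theorem is phrased so generically. The point of the statement is to isolate the deterministic linear-algebra step so that the remaining analysis for each of the three noise/missing-data models reduces to two well-separated probabilistic tasks: (i) verifying that the particular $\hat{\Sigma}$ being used is lower-bounded in spectrum with high probability (strong convexity), and (ii) obtaining a sharp high-probability bound on the ``residual'' $\|\hat{\gamma} - \hat{\Sigma}\beta^*\|_2$. The hard work — concentration of the plug-in estimators for $X^\top X$ and $X^\top \mathbf{y}$ under sub-Gaussian noise, erasures, or instrumental variables — is deferred to those follow-up lemmas.
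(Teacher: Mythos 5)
Your proof is correct, but it takes a different route from the paper's. You invert the normal equation $\hat{\Sigma}\hat{\beta}=\hat{\gamma}$ and bound $\|\hat{\Sigma}^{-1}\|_{\mathrm{op}}\le 1/\lambda$, which is perfectly valid here since $\lambda_{\min}(\hat{\Sigma})\ge\lambda>0$ makes $\hat{\Sigma}$ invertible and the unconstrained minimizer coincides with the closed form; you even get the constant $1$ rather than the paper's $2$. The paper instead runs the standard ``basic inequality'' argument from the variational characterization: writing $\Delta=\hat{\beta}-\beta^{*}$, optimality of $\hat{\beta}$ gives $\Delta^{\top}\hat{\Sigma}\Delta\le 2(\hat{\gamma}-\hat{\Sigma}\beta^{*})^{\top}\Delta$, and then the left side is lower-bounded by $\lambda\|\Delta\|_{2}^{2}$ while the right side is bounded by Cauchy--Schwarz. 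The practical difference is one of generality: the variational argument never needs $\hat{\Sigma}$ to be invertible globally --- it only uses a curvature lower bound in the direction of $\Delta$, so it extends verbatim to constrained or regularized versions of the estimator and to restricted-strong-convexity settings where no closed-form inverse exists. Your argument is cleaner and tighter for the exact unconstrained quadratic at hand, but it is tied to the explicit formula $\hat{\beta}=\hat{\Sigma}^{-1}\hat{\gamma}$. Your framing of the theorem's role --- isolating the deterministic step so the corollaries reduce to (i) a spectral lower bound on $\hat{\Sigma}$ and (ii) a concentration bound on $\hat{\gamma}-\hat{\Sigma}\beta^{*}$ --- matches exactly how the paper uses it.
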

\begin{proof}
Let $\Delta=\hat{\beta}-\beta^{*}$. By optimality of $\hat{\beta}$,
we have $(\beta^{*}+\Delta){}^{\top}(\hat{\Sigma})(\beta^{*}+\Delta)-2\hat{\gamma}^{\top}(\beta^{*}+\Delta)\le\beta^{*\top}(\hat{\Sigma})\beta-2\hat{\gamma}^{\top}\beta^{*}.$
Rearranging terms gives $\Delta^{\top}\hat{\Sigma}\Delta\le2(\hat{\gamma}^{\top}-\beta^{*\top}\hat{\Sigma})\Delta$.
Under the strong convexity assumption, the l.h.s. is lower-bounded
by $\lambda\left\Vert \Delta\right\Vert _{2}^{2}$. The r.h.s.\ is upper-bounded
by $2\left\Vert \hat{\gamma}-\hat{\Sigma}\beta^{*}\right\Vert _{2}\left\Vert \Delta\right\Vert _{2}$
thanks to Cauchy-Schwarz. The result then follows. 
\end{proof}
This result is simple and generic. We specialize this bound to the case of additive noise, and missing variables, and in particular, in the case of additive noise we specialize it according to each estimator we form depending on the information available (as discussed above).

\subsection{Additive Noise}

As outlined above, we have three different approaches for obtaining information on $X^{\top}X$, and hence three different estimators, depending on what information we have available. Given this information, the resulting estimator is quite natural. We list these here, and subsequently provide the results obtained by specializing the main theorem above.
\begin{enumerate}
\item If $\Sigma_{w}$ is known, we use $\hat{\Sigma}=Z^{\top}Z-\Sigma_{w}$, $\hat{\gamma}=Z^{\top}y$. We note that this estimator has been previously studied in \cite{xu2007eiv}, but their analysis is asymptotic. Here we give finite-sample bounds.

\item If $\Sigma_{x}$ is known, we use $\hat{\Sigma}=\Sigma_{x}$, $\hat{\gamma}=Z^{\top}y$. This is simple, and as our results show, in certain regimes its performance improves that of $Z^{\top}Z-\Sigma_{w}$. While simple and natural, we were not able to find previous analysis with performance guarantees.
\item An Instrumental Variable (IV) $U\in\mathbb{R}^{n\times m}$ ($m\ge k$) is a matrix whose rows are correlated with the corresponding rows of $X$ but independent of $W$. If such an IV is known, we use $\hat{\Sigma}=Z^{\top}UU^{\top}Z$, $\hat{\gamma}=Z^{\top}UU^{\top}y$. While the use of instrumental variables is popular in the economics literature, we are unaware of previous analysis with non-asymptotic performance guarantees.
\end{enumerate}
\begin{rem} While assuming knowledge of $\Sigma_w$ is common, and indeed a central focus of this paper, in some applications it may only be reasonable to assume knowledge of an {\it upper bound} on the noise covariance. That is, we may only be able to obtain some estimate $\overline{\Sigma}_w$ such that $\overline{\Sigma}_w \succeq \Sigma_w$. Our algorithms and analysis carry over in this case, providing a somewhat weaker (as expected) guarantee for $\ell^2$ error.
\end{rem}

The results we present hold with high probability, where recall that by with high probability (w.h.p.) we mean with probability at least $1-C_{1}p^{-C_{2}}$, for positive constants $C_1$, $C_2$ independent of $n$, $p$, $k$ and $\sigma$. While in the reduction from the high-dimensional setting, the parameter $p$ has a natural interpretation as the original number of covariates (i.e., the dimension of $\beta^{\ast}$), here it is just a parameter that indexes the guarantees, trading off between accuracy and reliability.
\begin{cor}
[Knowledge of $\Sigma_{w}$]\label{cor:LD_add_1} Suppose $n\gtrsim\frac{(1+\sigma_{w}^{2})^{2}}{\lambda^2_{\min}(\Sigma_{x})}k\log p$ for $ p\ge k $.
Then, w.h.p., the estimator built using $\hat{\Sigma} =  Z^{\top}Z-\Sigma_{w}$ and $\hat{\gamma}=Z^{\top} \mathbf{y}$, satisfies
\[
\left\Vert \hat{\beta}-\beta^{*}\right\Vert _{2}\lesssim\frac{\left(\sigma_{w}+\sigma_{w}^{2}\right)\left\Vert \beta^{*}\right\Vert _{2}+\sigma_{e}\sqrt{1+\sigma_{w}^{2}}}{\lambda_{\min}(\Sigma_{x})}\sqrt{\frac{k\log p}{n}}.
\]
\end{cor}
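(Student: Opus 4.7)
The plan is to apply Theorem~\ref{prop:lowD_general} to the pair $(\hat{\Sigma},\hat{\gamma})=(Z^{\top}Z-\Sigma_{w},\,Z^{\top}\mathbf{y})$, and so the proof breaks into two parts: verifying strong convexity of $\hat{\Sigma}$ with $\lambda\asymp\lambda_{\min}(\Sigma_{x})$, and bounding the residual $\|\hat{\gamma}-\hat{\Sigma}\beta^{*}\|_{2}$ at the desired rate.

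For the strong convexity part, expand $Z^{\top}Z=X^{\top}X+X^{\top}W+W^{\top}X+W^{\top}W$ and note that $\mathbb{E}[\hat{\Sigma}]=\Sigma_{x}$ by independence of $X$ and $W$. I would then invoke a standard sub-Gaussian matrix concentration inequality on each of the four summands: $\|X^{\top}X-\Sigma_{x}\|_{2}$, $\|W^{\top}W-\Sigma_{w}\|_{2}$, and the cross terms $\|X^{\top}W\|_{2}$. Each deviation scales like $(1+\sigma_{w}^{2})\sqrt{(k+\log p)/n}$ up to constants. Under the assumed sample size $n\gtrsim(1+\sigma_{w}^{2})^{2}k\log p/\lambda_{\min}^{2}(\Sigma_{x})$ this sum is at most $\lambda_{\min}(\Sigma_{x})/2$, yielding $\lambda_{\min}(\hat{\Sigma})\ge\lambda_{\min}(\Sigma_{x})/2$ with high probability; this gives $\lambda=\lambda_{\min}(\Sigma_{x})/2$ in Theorem~\ref{prop:lowD_general}.

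For the residual, substitute $\mathbf{y}=X\beta^{*}+\mathbf{e}$ and cancel the common $X^{\top}X\beta^{*}+W^{\top}X\beta^{*}$ terms to obtain the clean decomposition
\begin{equation*}
\hat{\gamma}-\hat{\Sigma}\beta^{*}\;=\;Z^{\top}\mathbf{e}\;-\;X^{\top}W\beta^{*}\;-\;(W^{\top}W-\Sigma_{w})\beta^{*}.
\end{equation*}
I would bound the three pieces separately. Writing $Z^{\top}\mathbf{e}=X^{\top}\mathbf{e}+W^{\top}\mathbf{e}$, each summand is a product of independent sub-Gaussian matrices/vectors, and standard Hoeffding-type bounds on sub-Gaussian quadratic forms give $\|Z^{\top}\mathbf{e}\|_{2}\lesssim\sigma_{e}\sqrt{1+\sigma_{w}^{2}}\sqrt{k\log p/n}$. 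Similarly, $X^{\top}W\beta^{*}$ is a sum of independent mean-zero sub-Gaussian vectors (conditional on $W$, or vice versa) with variance proxy $\sigma_{w}\|\beta^{*}\|_{2}/\sqrt{n}$ per coordinate, yielding $\|X^{\top}W\beta^{*}\|_{2}\lesssim\sigma_{w}\|\beta^{*}\|_{2}\sqrt{k\log p/n}$. Finally, $(W^{\top}W-\Sigma_{w})\beta^{*}$ is handled by the same matrix-concentration step already used above, giving $\|(W^{\top}W-\Sigma_{w})\beta^{*}\|_{2}\lesssim\sigma_{w}^{2}\|\beta^{*}\|_{2}\sqrt{k\log p/n}$.

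Adding the three contributions yields $\|\hat{\gamma}-\hat{\Sigma}\beta^{*}\|_{2}\lesssim\bigl[(\sigma_{w}+\sigma_{w}^{2})\|\beta^{*}\|_{2}+\sigma_{e}\sqrt{1+\sigma_{w}^{2}}\bigr]\sqrt{k\log p/n}$; dividing by $\lambda=\lambda_{\min}(\Sigma_{x})/2$ via Theorem~\ref{prop:lowD_general} gives exactly the claimed bound. The main technical obstacle is the careful bookkeeping of sub-Gaussian parameters across the several cross-product concentration steps to ensure the dependence on $\sigma_{w}$ matches the stated $\sigma_{w}+\sigma_{w}^{2}$ and $\sqrt{1+\sigma_{w}^{2}}$ factors rather than a looser bound; everything else is a direct application of the sub-Gaussian design assumption and a union bound over the $k$ coordinates (which is where the $\log p$ factor, together with $p\ge k$, enters).
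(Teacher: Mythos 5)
Your proposal is correct and follows essentially the same route as the paper: the identical decomposition $\hat{\gamma}-\hat{\Sigma}\beta^{*}=Z^{\top}\mathbf{e}-X^{\top}W\beta^{*}-(W^{\top}W-\Sigma_{w})\beta^{*}$, concentration bounds on each piece, a lower bound $\lambda_{\min}(\hat{\Sigma})\ge\lambda_{\min}(\Sigma_{x})/2$ under the stated sample size, and an appeal to Theorem~\ref{prop:lowD_general}. The only cosmetic differences are that the paper bounds the residual terms in $\ell^{\infty}$ via Lemma~\ref{lem:Sigma_beta} and pays a $\sqrt{k}$ to pass to $\ell^{2}$, and treats $Z$ as a single sub-Gaussian matrix with parameter $(\tfrac{1}{n}(\Sigma_{x}+\Sigma_{w}),\tfrac{1}{n}(1+\sigma_{w}^{2}))$ in the strong-convexity step rather than expanding $Z^{\top}Z$ into four summands.
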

\begin{rem}
Note that when $\sigma_{w}=0$, the bound reduces to the standard bound for the least-squares estimator; in particular, it implies exact recovery when $\sigma_{w}=\sigma_{e}=0$. 
Also, compared to existing results in \cite{xu2007eiv,loh2011nonconvex}, our bound makes clear the dependence on $\left\Vert \beta^{*}\right\Vert _{2}$. This is important and intuitive since $W$ is multiplied by $\beta^{\ast}$.
\end{rem}
\begin{rem} 
If we only have an upper bound, $\overline{\Sigma}_w \succeq \Sigma_w$, then using the same analysis one can show:
\[
\left\Vert \hat{\beta}-\beta^{*}\right\Vert _{2}\lesssim\frac{ \left[\left(\sigma_{w}+\sigma_{w}^{2}\right)\left\Vert \beta^{*}\right\Vert _{2}+\sigma_{e}\sqrt{1+\sigma_{w}^{2}} \right] \sqrt{\frac{k\log p}{n}} + \lambda_{\max}(\overline{\Sigma}_w - \Sigma_w) \|\beta^{\ast}\|_2}{\lambda_{\min}(\Sigma_{x}) - \lambda_{\max}(\overline{\Sigma}_w - \Sigma_w)}.
\]
with high probability as long as the denominator is positive.  Note that, as one might expect, the result is not consistent. Nevertheless, it allows us to quantify precisely the value of better estimation of the noise covariance.
\end{rem}

\begin{cor}[Knowledge of $\Sigma_{x}$]\label{cor:LD_add_2} Suppose $n\gtrsim  \log p$. Then, w.h.p.,
the estimator built using $\hat{\Sigma}=\Sigma_{x}$ and $\hat{\gamma}=Z^{\top} \mathbf{y}$, satisfies
\[
\left\Vert \hat{\beta}-\beta^{*}\right\Vert _{2}\lesssim\frac{\left(1+\sigma_{w}\right)\left\Vert \beta^{*}\right\Vert _{2}+\sigma_{e}\sqrt{1+\sigma_{w}^{2}}}{\lambda_{\min}(\Sigma_{x})}\sqrt{\frac{k\log p}{n}.}
\]
\end{cor}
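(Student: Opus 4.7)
My plan is to invoke Theorem~\ref{prop:lowD_general} directly, exploiting the fact that $\hat{\Sigma} = \Sigma_{x}$ is deterministic. The strong convexity hypothesis is then automatic with $\lambda = \lambda_{\min}(\Sigma_{x}) > 0$, requiring no sample complexity at all; this is precisely why the sample condition here ($n \gtrsim \log p$) is much weaker than in Corollary~\ref{cor:LD_add_1}, where one had to concentrate $Z^{\top}Z - \Sigma_{w}$ around $\Sigma_{x}$ in operator norm before the lower eigenvalue bound could be invoked.

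All that remains is to bound $\|\hat{\gamma} - \hat{\Sigma}\beta^{*}\|_{2} = \|Z^{\top}\mathbf{y} - \Sigma_{x}\beta^{*}\|_{2}$. Substituting $Z = X + W$ and $\mathbf{y} = X\beta^{*} + \mathbf{e}$, I split the error into four terms,
\[
Z^{\top}\mathbf{y} - \Sigma_{x}\beta^{*} = (X^{\top}X - \Sigma_{x})\beta^{*} + W^{\top}X\beta^{*} + X^{\top}\mathbf{e} + W^{\top}\mathbf{e},
\]
and bound each separately in $\ell_{2}$ norm.

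The three ``bilinear'' terms $W^{\top}X\beta^{*}$, $X^{\top}\mathbf{e}$, and $W^{\top}\mathbf{e}$ all share the same structure: each coordinate is a sum of $n$ independent products of two independent sub-Gaussians (rows are independent, and $X$, $W$, $\mathbf{e}$ are mutually independent). Such products are sub-exponential with parameters of order $\sigma_{w}\|\beta^{*}\|_{2}/n$, $\sigma_{e}/n$, and $\sigma_{w}\sigma_{e}/n$ respectively. Bernstein's inequality, combined with the sample condition $n \gtrsim \log p$ which places the deviation in the sub-Gaussian tail regime, yields per-coordinate deviation at rate $\sqrt{\log p/n}$ times the corresponding parameter with probability $1-p^{-C}$. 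A union bound over the $k$ coordinates and conversion from $\ell_{\infty}$ to $\ell_{2}$ contributes an additional $\sqrt{k}$ factor, producing contributions of order $\sigma_{w}\|\beta^{*}\|_{2}$, $\sigma_{e}$, and $\sigma_{w}\sigma_{e}$ times $\sqrt{k\log p/n}$.

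The main obstacle is the quadratic term $(X^{\top}X - \Sigma_{x})\beta^{*}$, since within a single row $\mathbf{x}_{i}$ the entries $X_{ij}$ and $\mathbf{x}_{i}^{\top}\beta^{*}$ are not independent of each other. I would still treat it coordinate-wise: the $j$-th entry equals $\sum_{i} X_{ij}(\mathbf{x}_{i}^{\top}\beta^{*}) - (\Sigma_{x}\beta^{*})_{j}$, a centered sum of $n$ \emph{independent} (across $i$) sub-exponentials with parameter of order $\|\beta^{*}\|_{2}/n$, since the product of two sub-Gaussians with parameters $1/\sqrt{n}$ and $\|\beta^{*}\|_{2}/\sqrt{n}$ is sub-exponential even when they are dependent. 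Bernstein's inequality (or equivalently a row-wise Hanson--Wright bound) then gives the same rate $\|\beta^{*}\|_{2}\sqrt{k\log p/n}$ after a union bound. Summing the four contributions and using $(1+\sigma_{w})\sigma_{e} \lesssim \sigma_{e}\sqrt{1+\sigma_{w}^{2}}$ yields
\[
\|\hat{\gamma} - \hat{\Sigma}\beta^{*}\|_{2} \lesssim \left[(1+\sigma_{w})\|\beta^{*}\|_{2} + \sigma_{e}\sqrt{1+\sigma_{w}^{2}}\right]\sqrt{\frac{k\log p}{n}},
\]
and dividing by $\lambda_{\min}(\Sigma_{x})$ via Theorem~\ref{prop:lowD_general} delivers the claimed bound.
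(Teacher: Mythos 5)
Your proposal is correct and follows essentially the same route as the paper: strong convexity is free because $\hat{\Sigma}=\Sigma_{x}$ is deterministic, and the error $\hat{\gamma}-\hat{\Sigma}\beta^{*}$ is decomposed into the same cross terms (the paper keeps $X^{\top}\mathbf{e}+W^{\top}\mathbf{e}$ bundled as $Z^{\top}\mathbf{e}$, which is immaterial since $\sigma_{e}+\sigma_{w}\sigma_{e}\asymp\sigma_{e}\sqrt{1+\sigma_{w}^{2}}$), each bounded coordinate-wise at rate $\sqrt{\log p/n}$ and converted to $\ell_{2}$ via a $\sqrt{k}$ factor. Your Bernstein/Hanson--Wright treatment of the dependent quadratic term $(X^{\top}X-\Sigma_{x})\beta^{*}$ is precisely the content of Lemma~\ref{lem:Sigma_beta} applied with $Y=X$, which the paper invokes for that term.
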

\begin{rem}
(1) We only require $n\gtrsim \log p$ (the case where we use $\Sigma_{w}$ for our estimator requires the much more restrictive $n\gtrsim(1+\sigma_{w}^{2})^{2}k\log p$). The reason for this, is that here we don't estimate $\hat{\Sigma}$ from data, and hence do not require $\Omega(k \log p)$ samples in order to control $\lambda_{\min}(\hat{\Sigma})$ by $\lambda_{\min}(\Sigma_x)$, as is required in the previous result. (2) The bound is linear, rather than quadratic, in $\sigma_{w}$ (when $\sigma_{w}$ is large), but it does {\em not} vanish when $\sigma_{w}$ and $\sigma_{e}$ are zero. 
\end{rem}

\begin{rem}
The projected gradient method in Loh and Wainwright \cite{loh2011nonconvex} can be modified to use $\Sigma_{x}$ as the covariance estimator, and when we extend their algorithm in the natural way, a similar analysis yields the same error bound. 
\end{rem}

Suppose $\Sigma_{x}=I$. Comparing the above bounds:
\begin{eqnarray*}
\textrm{Knowledge of $\Sigma_w$:} &  & \left\Vert \hat{\beta}-\beta^{*}\right\Vert _{2}\lesssim\left[\left(\sigma_{w}+\sigma_{w}^{2}\right)\left\Vert \beta^{*}\right\Vert _{2}+\sigma_{e}\sqrt{1+\sigma_{w}^{2}}\right]\sqrt{\frac{k\log p}{n}.}\\
\textrm{Knowledge of $\Sigma_x$:} &  & \left\Vert \hat{\beta}-\beta^{*}\right\Vert _{2}\lesssim\left[\left(1+\sigma_{w}\right)\left\Vert \beta^{*}\right\Vert _{2}+\sigma_{e}\sqrt{1+\sigma_{w}^{2}}\right]\sqrt{\frac{k\log p}{n},}
\end{eqnarray*}
we see that the only difference is $\sigma_{w}^{2}$ vs. $1$. The $\sigma_{w}^{2}$
term arises from $(W^{\top}W-\Sigma_{w})$ while the $1$-term comes from $(X^{\top}X-\Sigma_{x})$.
The first bound is better when $\sigma_{w}^{2}<1$, and the other
way around when $\sigma_{w}^{2}>1$. This suggests the following strategy:
if we somehow know (or can estimate) both the variance of $X$ and $W$,
then we should use the first estimator if $\sigma_{w}^{2}<1$, otherwise use the second estimator. This gap in performance according to different regimes is in fact present, as confirmed in our simulations in Section \ref{sec:simulations}.

For the final result in this section, we use the following standard notation: For a matrix $A$, we let $\sigma_{i}(A)$ be the $i$-th singular value, so, e.g., $\sigma_1(A) = \sigma_{\max}(A)$.
\begin{cor}[Instrumental Variables]\label{cor:LD_add_3} Suppose the Instrumental Variable
$U\in\mathbb{R}^{n\times m}$ is zero-mean sub-Gaussian with parameter
$(\frac{1}{n}\Sigma_U,\frac{1}{n}\sigma_{u}^{2})$, and $\mathbb{E}\left[U^{\top}X\right]=\Sigma_{UX}.$
Let $\sigma_{1}=\sigma_{1}(\Sigma_{UX})$ and $\sigma_{k}=\sigma_{k}(\Sigma_{UX})$.
If $n\gtrsim\max\left\{ 1,\frac{\sigma_{u}^{2}(1+\sigma_{w}^{2})}{(m/k)\sigma_{k}^{2}}\right\} k\log p$,
then w.h.p. the estimator built using $\hat{\Sigma} = Z^{\top}UU^{\top}Z$, and $\hat{\gamma} = Z^{\top}UU^{\top}\mathbf{y}$, satisfies
\[
\left\Vert \hat{\beta}-\beta^{*}\right\Vert _{2}\lesssim\sqrt{\sigma_{w}^{2}\left\Vert \beta^{*}\right\Vert ^{2}+\sigma_{e}^{2}}\frac{\sigma_{1}\sigma_{u}}{\sigma_{k}^{2}\sqrt{k/m}}\sqrt{\frac{k\log p}{n}}.
\]
\end{cor}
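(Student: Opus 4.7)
The plan is to invoke Theorem~\ref{prop:lowD_general}, which reduces everything to (i) a high-probability lower bound on $\lambda_{\min}(\hat\Sigma) = \lambda_{\min}(Z^\top U U^\top Z)$, and (ii) a high-probability upper bound on $\|\hat\gamma - \hat\Sigma\beta^*\|_2$. Substituting $\mathbf{y} = X\beta^* + \mathbf{e}$ and $Z = X + W$ gives the clean identity
\[
\hat\gamma - \hat\Sigma\beta^* \;=\; Z^\top U U^\top(\mathbf{e} - W\beta^*),
\]
so by Cauchy-Schwarz, $\|\hat\gamma - \hat\Sigma\beta^*\|_2 \le \|U^\top Z\|_{\mathrm{op}} \cdot \|U^\top(\mathbf{e} - W\beta^*)\|_2$. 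I would then attack these two factors, together with (i), through two independent concentration tasks: one on $\|U^\top Z - \Sigma_{UX}\|_{\mathrm{op}}$ and one on $\|U^\top(\mathbf{e} - W\beta^*)\|_2$.

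For the operator-norm concentration, the independence of $U$ from both $W$ and $\mathbf{e}$ gives $\mathbb{E}[U^\top Z] = \Sigma_{UX}$, so I would split $U^\top Z - \Sigma_{UX} = (U^\top X - \Sigma_{UX}) + U^\top W$ and apply a standard $\varepsilon$-net/covering bound for sub-Gaussian matrices to get
\[
\|U^\top Z - \Sigma_{UX}\|_{\mathrm{op}} \;\lesssim\; \sigma_u(1 + \sigma_w)\sqrt{\tfrac{k\log p}{n}}
\]
with high probability under the stated sample-size condition. By Weyl, this yields simultaneously $\sigma_k(U^\top Z) \ge \sigma_k/2$ (so $\lambda_{\min}(\hat\Sigma) = \sigma_k^2(U^\top Z) \gtrsim \sigma_k^2$) and $\|U^\top Z\|_{\mathrm{op}} \le \sigma_1 + \sigma_k/2 \lesssim \sigma_1$, handling both (i) and the first Cauchy-Schwarz factor.

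For the noise factor, conditioning on $\mathbf{e}$ and $W$, the quantity $U^\top(\mathbf{e} - W\beta^*) = \sum_{l=1}^n u_l(e_l - w_l^\top\beta^*)$ is a sum of $n$ independent zero-mean sub-Gaussian vectors in $\mathbb{R}^m$. Each term has effective scale $\sigma_u/\sqrt{n}$ weighted by the scalar $(e_l - w_l^\top\beta^*)$, which is sub-Gaussian with parameter $\lesssim \sqrt{\sigma_e^2 + \sigma_w^2\|\beta^*\|_2^2}/\sqrt{n}$. A covering argument over $\mathbb{S}^{m-1}$ then delivers
\[
\|U^\top(\mathbf{e} - W\beta^*)\|_2 \;\lesssim\; \sigma_u\sqrt{\sigma_e^2 + \sigma_w^2\|\beta^*\|_2^2}\,\sqrt{\tfrac{m\log p}{n}}.
\]
Plugging these three pieces into Theorem~\ref{prop:lowD_general} produces $\|\hat\beta - \beta^*\|_2 \lesssim \frac{\sigma_1\sigma_u}{\sigma_k^2}\sqrt{\sigma_e^2 + \sigma_w^2\|\beta^*\|_2^2}\sqrt{m\log p/n}$, which is the stated bound after rewriting $\sqrt{m\log p/n} = (k/m)^{-1/2}\sqrt{k\log p/n}$.

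The main obstacle is arranging the operator-norm concentration so that the resulting sample-size requirement carries the favorable $(m/k)$ in the denominator rather than scaling with $m$. A direct $\varepsilon$-net over $\mathbb{S}^{m-1}$ and $\mathbb{S}^{k-1}$ gives a looser bound of order $\sigma_u(1+\sigma_w)\sqrt{(m+k)\log p/n}$ and consequently $n \gtrsim \sigma_u^2(1+\sigma_w^2)m\log p/\sigma_k^2$. To recover the claimed condition one must exploit that only the rank-$k$ column span of $\Sigma_{UX}$ affects the smallest singular value: projecting $U^\top Z$ by an orthonormal basis $P \in \mathbb{R}^{m\times k}$ of this span gives a $k\times k$ matrix $P^\top U^\top Z$ with $\sigma_k(P^\top U^\top Z) \le \sigma_k(U^\top Z)$, and $\tilde U := UP$ is still sub-Gaussian with parameter $\sigma_u$ but now ambient dimension $k$, so the concentration of $\tilde U^\top Z$ only pays a $\sqrt{k\log p/n}$ covering cost. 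Tracking the dimension dependence through this reduction carefully, and verifying that the additional $m/k$ averaging improvement appears correctly, is the delicate bookkeeping step of the argument.
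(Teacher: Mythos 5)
Your proposal follows essentially the same route as the paper's proof: reduce to Theorem~\ref{prop:lowD_general}, write $\hat\gamma-\hat\Sigma\beta^{*}=Z^{\top}UU^{\top}(\mathbf{e}-W\beta^{*})$, control $\lambda_{\min}(\hat\Sigma)=\sigma_{k}^{2}(U^{\top}Z)$ and the operator norm of $U^{\top}Z$ via concentration of $U^{\top}X-\Sigma_{UX}$ and $U^{\top}W$ (the paper's Lemma~\ref{lem:restricted_e}, i.e.\ exactly your $\varepsilon$-net step), and bound $\left\Vert U^{\top}(W\beta^{*}+\mathbf{e})\right\Vert _{2}\lesssim\sigma_{u}\sqrt{\sigma_{w}^{2}\left\Vert \beta^{*}\right\Vert ^{2}+\sigma_{e}^{2}}\sqrt{m\log p/n}$ before combining. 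The one place you go beyond the paper --- the projection onto the rank-$k$ column span of $\Sigma_{UX}$ to avoid the $(k+m)\log p$ covering cost and justify the $(m/k)$ factor in the sample-size condition --- flags a genuine subtlety, but the paper does not carry out that refinement either (it simply applies Lemma~\ref{lem:restricted_e} with $\lambda=\sigma_{k}$ and $\lambda=\sigma_{1}$ ``under our assumption''), so on that point your attempt is, if anything, more careful than the published argument.
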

\begin{rem}
Consider the term 
\[
\sqrt{\sigma_{w}^{2}\left\Vert \beta^{*}\right\Vert ^{2}+\sigma_{e}^{2}}\frac{\sigma_{1}\sigma_{u}}{\sigma_{k}^{2}\sqrt{k/m}}=\frac{\sqrt{\sigma_{w}^{2}\left\Vert \beta^{*}\right\Vert ^{2}+\sigma_{e}^{2}}}{(\sigma_{1}/\sigma_{u})\sqrt{\frac{k}{m}}}\cdot\frac{1}{\left(\sigma_{k}/\sigma_{1}\right)^{2}}.
\]
The first factor can be interpreted as 1/SNR, and the second is a measure of the correlation between $X$ and $U$ (i.e., the strength of the Instrumental Variable).
\end{rem}

\subsection{Missing Data}

As we do for noisy data, in the missing data setting we consider the sub-Gaussian Design model, so that $X$ is sub-Gaussian with possibly dependent columns. We assume that each entry of the covariate matrix $X$ is missing independently of all other entries, with probability $\rho$. Again, the key is in designing an estimator for $X^{\top}X$. We consider the natural choice, given our erasure model: We use $\hat{\Sigma}=(Z^{\top}Z)\odot M$ and $\hat{\gamma}=\frac{1}{(1-\rho)}Z^{\top}\mathbf{y}$, where $M_{ij}=\frac{1}{1-\rho}$ if $i=j$ or $\frac{1}{(1-\rho)^{2}}$ otherwise; here $ \odot  $ denotes element-wise product.
\begin{cor}
[Missing Data]\label{cor:LD_mis_1}Suppose $n\gtrsim\frac{1}{(1-\rho)^{4}\lambda_{\min}^{2}(\Sigma_{x})}k\log p$.
Then, w.h.p. our estimator satisfies
\[
\left\Vert \hat{\beta}-\beta^{*}\right\Vert _{2}\lesssim\left(\frac{1}{(1-\rho)^{2}}\left\Vert \beta^{*}\right\Vert _{2}+\frac{1}{1-\rho}\sigma_{e}\right)\frac{1}{\lambda_{\min}(\Sigma_{x})}\sqrt{\frac{k\log p}{n}}.
\]
\end{cor}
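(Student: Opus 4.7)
The plan is to instantiate Theorem~\ref{prop:lowD_general} with the missing-data pair $(\hat\Sigma,\hat\gamma)$, so there are really two technical quantities to control: (i) $\lambda_{\min}(\hat\Sigma)$, to verify strong convexity; and (ii) $\|\hat\gamma-\hat\Sigma\beta^*\|_2$, the gradient-at-truth. Before either of these, I would first write the erasure model as $Z_{ij}=X_{ij}B_{ij}$ with $B_{ij}\sim\mathrm{Bern}(1-\rho)$ independent of everything else, and check that the mask $M$ has been chosen so that $\mathbb{E}[\hat\Sigma]=\Sigma_x$ and $\mathbb{E}[\hat\gamma]=\Sigma_x\beta^*$; together with $\mathbb{E}[\mathbf{e}]=0$ this makes $\hat\gamma-\hat\Sigma\beta^*$ a zero-mean object, which is what we need for concentration.

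For (i), I would use the decomposition
\[
\hat\Sigma-\Sigma_x \;=\; \bigl(\hat\Sigma-\mathbb{E}_B[\hat\Sigma\mid X]\bigr)\;+\;\bigl(X^\top X-\Sigma_x\bigr),
\]
where the second term vanishes in expectation over $B$ by the definition of $M$, so $\mathbb{E}_B[\hat\Sigma\mid X]=X^\top X$ (this is a short entry-by-entry check). The second summand is controlled by the standard sub-Gaussian operator-norm bound of order $\sqrt{k\log p/n}$, assuming the sample-size condition. The first summand is the genuinely new piece: conditionally on $X$, each entry is a sum of $n$ independent Bernoulli-weighted terms rescaled by $1/(1-\rho)$ or $1/(1-\rho)^2$. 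An $\epsilon$-net over the unit sphere in $\mathbb{R}^k$, combined with a Bernstein-style tail bound for a quadratic form in sub-Gaussian (from $X$) and Bernoulli (from $B$) variables, yields an operator-norm bound of order $\frac{1}{(1-\rho)^2}\sqrt{k\log p/n}$. Under the hypothesized sample size $n\gtrsim k\log p/\bigl[(1-\rho)^4\lambda_{\min}^2(\Sigma_x)\bigr]$ this is at most $\lambda_{\min}(\Sigma_x)/2$, and hence $\lambda:=\lambda_{\min}(\hat\Sigma)\ge \lambda_{\min}(\Sigma_x)/2$ w.h.p.

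For (ii), I would split
\[
\hat\gamma-\hat\Sigma\beta^* \;=\; \tfrac{1}{1-\rho}Z^\top \mathbf{e} \;+\; \Bigl(\tfrac{1}{1-\rho}Z^\top X-\hat\Sigma\Bigr)\beta^*.
\]
The first term $\frac{1}{1-\rho}Z^\top\mathbf{e}$: since $\mathbf{e}$ is independent of $Z$ and sub-Gaussian with parameter $\sigma_e^2/n$, and the columns of $Z$ are sub-Gaussian, a coordinate-wise Hoeffding bound together with a union bound over $k$ coordinates yields $\|\cdot\|_2\lesssim \frac{\sigma_e}{1-\rho}\sqrt{k\log p/n}$ w.h.p. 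The second term is controlled exactly like $\hat\Sigma-\Sigma_x$ in part (i): both $\frac{1}{1-\rho}Z^\top X$ and $\hat\Sigma$ are unbiased estimators of $\Sigma_x$, so their difference concentrates in operator norm at rate $\frac{1}{(1-\rho)^2}\sqrt{k\log p/n}$; multiplying by $\|\beta^*\|_2$ gives the claimed contribution. Adding the two pieces and plugging into Theorem~\ref{prop:lowD_general} with $\lambda\asymp\lambda_{\min}(\Sigma_x)$ delivers the bound in Corollary~\ref{cor:LD_mis_1}.

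The main obstacle is the operator-norm concentration of $\hat\Sigma$: the $1/(1-\rho)^2$ rescaling of off-diagonals amplifies the variance, and the estimator is a doubly-random quadratic form (sub-Gaussian $X$ times Bernoulli $B$) rather than a plain sub-Gaussian sample covariance, so the standard matrix concentration machinery has to be applied with care to keep the $(1-\rho)$ dependence clean. Everything else is either routine sub-Gaussian concentration plus union bound, or algebra reusing the master theorem.
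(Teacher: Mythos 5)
Your overall strategy coincides with the paper's: verify strong convexity of $\hat{\Sigma}$, bound $\|\hat{\gamma}-\hat{\Sigma}\beta^{*}\|_{2}$, and invoke Theorem \ref{prop:lowD_general}; your algebraic split of $\hat{\gamma}-\hat{\Sigma}\beta^{*}$ into the $\frac{1}{1-\rho}Z^{\top}\mathbf{e}$ piece and a ``two unbiased estimators of $\Sigma_x$'' piece is equivalent to the paper's split through $\Sigma_{x}\beta^{*}$. Where you genuinely diverge is the concentration of the masked covariance $\hat{\Sigma}=(Z^{\top}Z)\odot M$: you condition on $X$, isolate the Bernoulli randomness via $\hat{\Sigma}-\mathbb{E}_B[\hat{\Sigma}\mid X]$, and propose an $\epsilon$-net plus Bernstein bound for the resulting doubly-random quadratic form. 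The paper avoids this two-stage analysis entirely: it uses the fact (inherited from \cite{loh2011nonconvex}) that $Z$ is itself sub-Gaussian with parameter $(\frac{1}{n}\Sigma_{z},\frac{1}{n})$ --- the sub-Gaussian constant does \emph{not} degrade with $\rho$ --- together with the exact identity $\Sigma_{z}\odot M=\Sigma_{x}$, and then handles the Hadamard weighting by writing $v^{\top}(\Delta_{z}\odot M)v=\frac{1}{(1-\rho)^{2}}v^{\top}\Delta_{z}v+(\text{diagonal correction})$, so that Lemmas \ref{lem:covariance_concentration} and \ref{lem:restricted_e} apply verbatim to $\Delta_{z}=Z^{\top}Z-\Sigma_{z}$ and the $(1-\rho)$ factors enter only through this deterministic rescaling. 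Your route buys nothing extra here and costs more: the Bernstein step you defer is the entire technical content, and because you condition on $X$, the sub-exponential parameters of the Bernoulli sums depend on the realized magnitudes $\max_{l,i}|X_{li}|$, so you would additionally need a truncation or maximal inequality for the entries of $X$ (with attendant log factors) before the conditional Bernstein bound gives a clean $\frac{1}{(1-\rho)^{2}}\sqrt{k\log p/n}$ uniformly over the net. I would regard your plan as workable but unfinished at precisely its hardest point; if you adopt the paper's observation that $Z$ is sub-Gaussian with an undegraded parameter and that $M$ acts as a near-uniform rescaling, that entire difficulty disappears. One small additional note: the $\frac{1}{1-\rho}Z^{\top}\mathbf{e}$ term is a sum of products of two sub-Gaussians, hence sub-exponential, so the coordinate-wise bound is Bernstein-type (Lemma \ref{lem:Sigma_beta}) rather than Hoeffding, though this does not change the rate.
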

\begin{rem}
Note that as with the previous results, the dependence on $\left\Vert \beta^{*}\right\Vert _{2}$ is given explicitly.
\end{rem}

\section{The High-Dimensional Problem and Orthogonal Matching Pursuit}
\label{sec:high}

We now move to the high-dimensional setting. Given the results of the previous section, the main challenge that remains in the high-dimensional setting is to understand precisely when we can recover the correct support of $\beta^{\ast}$. With this accomplished, we can immediately apply the results of the low-dimensional setting. It is this which spares us from having to compute an estimate of $X^{\top}X$ in the high-dimensional setting, thus allowing us to avoid issues with non-convex optimization. 

The main contribution in this section is showing that our simple approach enjoys, as far as we know, the best known support recovery guarantees in the noisy and missing variable setting.

Our algorithm is iterative, and we would expect, as our experimental results in Section \ref{sec:simulations} corroborate, that it performs well despite correlation in the columns of $X$ and $W$. However, the line of analysis we develop in order to prove our performance guarantees, seems to require this independence. Hence, the results in this section all assume the {\it Independent} sub-Gaussian design model, i.e., the entries of $X$ and $W$ are assumed independent of each other and everything else.

Interestingly, this section shows that estimating the support of $\beta^{\ast}$, can be done {\it without using knowledge of $\Sigma_{w}$, $\Sigma_{x}$, or an instrumental variable}, but instead using $Z$ directly. A critical advantage to this approach is that while we pay a price in $ \ell_2 $ errors for not having an accurate estimate of $\Sigma_w$ (as in the remark of the previous section), our support recovery is {\it distributionally robust}, in the sense that our algorithm does not need to know $\Sigma_w$ or $\Sigma_x$ in order to recover the support (our guarantees, of course, are in terms of these quantities). Indeed, we use $Z$ directly for deciding on the next element to add to the support set. We note that this is not that surprising, given that the error is assumed to be unbiased (in particular, in dependent of $X$ and $\beta^{\ast}$) and standard OMP adds the element corresponding to largest correlation. Somewhat surprisingly, we also use $Z$ directly to estimate the residual, rather than use the estimators of the previous section. The advantage is that this allows distributionally-robust (i.e., distributionally oblivious) support recovery. Without this approach, we would have to control the propagation of the additional error obtained from conservative estimates of the noise. It seems that other algorithms in this space, e.g., \cite{loh2011nonconvex} require knowledge of $\Sigma_w$. We test the distributional robustness (i.e., the effect of not knowing the correct $\Sigma_w$) in our simulations results in Section \ref{sec:simulations}, where our results show that support recovery in \cite{loh2011nonconvex} deteriorates if $\Sigma_w$ is under- or over-estimated.

We consider the following modified OMP algorithm (Algorithm \ref{alg:1}). Note that the support recovery step is identical to OMP, where as in the final step recovering $\hat{\beta}$ is done using the estimators of the previous section. Given a matrix $Y$, we use $Y_{I}$ to denote the sub matrix with the columns of $Y$ with indices in $I$, and $Y_{I,I}$ the square submatrix of $Y$ with row and column indices in $I$. We note that unlike some of the latest results on OMP, we do not deal with stopping rules here, but instead simply assume we know the sparsity (or an upper bound of it). We believe that this can be relaxed by adapting the by-now standard stopping rules, although we do not pursue it here.

\begin{algorithm}[H]
\caption{mod-OMP\label{alg:1}}

\begin{lyxcode}
Input:~$Z$,$\mathbf{y}$,$k$

Initialize~$I=\phi$,~$I^{c}=\{1,2,\ldots,p\}$,~$\mathbf{r}=0$.

For~$j=1:k$

~~~~Compute~corrected~inner~products~$h_{i}=Z_{i}^{\top}\mathbf{r}$,~for~$i\in I^{c}$.

~~~~Let~$i^{*}=\arg\max_{i\in I^{c}}\left|h_{i}\right|$.

~~~~Set~$I=I\cup\{i^{*}\}$.

~~~~Update~residual~$\mathbf{r}=\mathbf{y}-Z_{I}(Z_{I}^{\top}Z_{I})^{-1}Z_{I}^{\top} \mathbf{y}$.

End

Set~$\hat{\beta}$~as:
\begin{eqnarray*}
\hat{\beta}_{I} & = & \hat{\Sigma}_{I,I}^{-1}\hat{\gamma}_{I}.\\
\hat{\beta}_{I^{c}} & = & 0,
\end{eqnarray*}
where $(\hat{\Sigma}_{I,I},\hat{\gamma}_I)$ are computed according to knowledge of $\Sigma_x$, $\Sigma_w$ or $U$.

Output~$\hat{\beta}$.~~~\end{lyxcode}
\end{algorithm}

\subsection{Guarantees for Additive Noise}
\begin{thm}
\label{thm:omp_add_support}Under the Independent sub-Gaussian Design model and Additive
Noise model, mod-OMP identifies the correct support of $\beta^{*}$ with high probability, provided
$n\gtrsim(1+\sigma_{w}^{2})^{2}k\log p$ and the non-zero entries
of $\beta^{*}$ is greater than 
\[
16\left(\sigma_{w}\left\Vert \beta^{*}\right\Vert _{2}+\sigma_{e}\right)\sqrt{\frac{\log p}{n}}.
\]
\end{thm}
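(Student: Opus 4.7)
The plan is induction on the iteration index $j$. I claim that if $I\subseteq S:=\mathrm{supp}(\beta^*)$ at the start of iteration $j$, then the index selected at iteration $j$ also lies in $S$; after $k$ successful iterations $I=S$, as required. The base case $I=\emptyset$ is vacuous.

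The key structural fact is that although $P_I^\perp := \mathbf{I}-Z_I(Z_I^\top Z_I)^{-1}Z_I^\top$ annihilates $Z_I$ rather than $X_I$, we have $P_I^\perp X_I = -P_I^\perp W_I$ (using $Z_I=X_I+W_I$). Splitting $\mathbf{y}=X_S\beta^*_S+\mathbf{e}$ along $S=I\sqcup(S\setminus I)$ yields the clean decomposition
\[
\mathbf{r} \;=\; P_I^\perp X_{S\setminus I}\beta^*_{S\setminus I} \;-\; P_I^\perp W_I\beta^*_I \;+\; P_I^\perp \mathbf{e},
\]
so that, apart from a remaining-signal piece, the residual only carries genuine noise of order $\sigma_w\|\beta^*_I\|_2$ and $\sigma_e$. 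The goal is to show $\max_{i\in S\setminus I}|h_i|>\max_{i\in S^c}|h_i|$, where $h_i=Z_i^\top \mathbf{r}$.

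For $i\in S^c$, the independent sub-Gaussian design model makes $Z_i$ independent of every ingredient of $\mathbf{r}$; conditioning on $\mathbf{r}$, the variable $h_i$ is sub-Gaussian in $Z_i$ with parameter $\sqrt{(1+\sigma_w^2)/n}\,\|\mathbf{r}\|_2$, giving $\max_{i\in S^c}|h_i|\lesssim \sqrt{(1+\sigma_w^2)\log p/n}\,\|\mathbf{r}\|_2$ after a union bound over the at most $p$ incorrect indices. For $i\in S\setminus I$, I expand $h_i$ term by term in $Z_i=X_i+W_i$ and isolate the diagonal signal $\beta^*_i\,X_i^\top P_I^\perp X_i$; since $X_i$ is independent of $Z_I$, $\mathbb{E}[X_i^\top P_I^\perp X_i\mid Z_I]=(n-|I|)/n\approx 1$, so this piece delivers magnitude $\beta^*_i\ge \beta_{\min}$. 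All remaining pieces (namely $X_i^\top P_I^\perp X_j$ for $j\neq i$, $W_i^\top P_I^\perp X_i$, $Z_i^\top P_I^\perp W_I\beta^*_I$, and $Z_i^\top P_I^\perp\mathbf{e}$) are sub-Gaussian by virtue of one factor being independent of the rest, and their contributions are bounded by standard concentration inequalities.

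Putting the two bounds together, the inductive step succeeds once $\beta_{\min}$ exceeds both (a) the noise scale $(\sigma_w\|\beta^*\|_2+\sigma_e)\sqrt{\log p/n}$ coming from the two genuine-noise terms in $\mathbf{r}$, and (b) a leakage term of order $\sqrt{(1+\sigma_w^2)k\log p/n}\,\beta_{\min}$ coming from the coupling of $Z_i$ with $P_I^\perp X_{S\setminus I}\beta^*_{S\setminus I}$; the sample-complexity hypothesis $n\gtrsim (1+\sigma_w^2)^2 k\log p$ exactly kills (b), while (a) is the $\beta_{\min}$ hypothesis in the theorem. A byproduct of the same sample-complexity assumption is that $\lambda_{\min}(Z_I^\top Z_I)$ concentrates around $1+\sigma_w^2$ uniformly over $|I|\le k$, so that the projection $P_I^\perp$ is well-defined and stable throughout. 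The main obstacle, and the reason the theorem requires independent (rather than merely sub-Gaussian) columns, is exactly the leakage term $Z_i^\top P_I^\perp X_{S\setminus I}\beta^*_{S\setminus I}$: a crude bound using $\|P_I^\perp X_{S\setminus I}\beta^*_{S\setminus I}\|_2\lesssim \|\beta^*\|_2$ introduces an unwanted $\sqrt{k}$ factor, and it is the independence across columns of $X$ and $W$ that permits the tighter sub-Gaussian control we need so that the sample-complexity hypothesis can absorb it.
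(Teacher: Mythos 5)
Your overall strategy is the one the paper uses: induction on the iterate, the observation that $(\mathbf{I}-\mathcal{P}_I)Z_I=0$ turns the residual into $(\mathbf{I}-\mathcal{P}_I)\bigl(X_{S\setminus I}\beta^*_{S\setminus I}-W_I\beta^*_I+\mathbf{e}\bigr)$, separate bounds for candidate indices inside and outside the support, and the same accounting for where the two hypotheses enter (the sample-size condition absorbs the leakage, the $\beta_{\min}$ condition beats the $\sigma_w\|\beta^*\|_2+\sigma_e$ noise floor). Your extraction of the signal via the diagonal entry $\beta^*_i X_i^{\top}(\mathbf{I}-\mathcal{P}_I)X_i$ is a cosmetic variant of the paper's route, which instead lower-bounds $\|h_{I_r}\|_\infty\ge\frac{1}{\sqrt{|I_r|}}\|h_{I_r}\|_2$ and uses $\lambda_{\min}\bigl(X_{I_r}^{\top}(\mathbf{I}-\mathcal{P}_I)X_{I_r}\bigr)\ge\tfrac14$; both work.

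The genuine gap is in your independence claims, which treat the selected set $I$ as if it were deterministic. First, for $i\in S^c$ you assert that $Z_i$ is ``independent of every ingredient of $\mathbf{r}$,'' but $\mathbf{r}$ depends on $I$, and $I$ is produced by argmax comparisons over \emph{all} of $I^c$ in earlier iterations --- so even on the event $I\subseteq S$, the set $I$ is a function of $Z_{S^c}$. The paper closes this with Tropp's decoupling device: run the oracle version of mod-OMP restricted to the columns in $S$, prove the selection inequality for the oracle's (genuinely $Z_{S^c}$-independent) iterates, and observe that on that event the real algorithm coincides with the oracle. Second, and not fixed by decoupling, the columns $X_i,W_i$ with $i\in S$ are \emph{not} independent of $I$ even for the oracle, so your claim that each remaining piece has ``one factor independent of the rest'' fails exactly for the terms $X_i^{\top}(\mathbf{I}-\mathcal{P}_I)X_j$, $W_i^{\top}(\mathbf{I}-\mathcal{P}_I)X_i$, and $Z_{I_r}^{\top}(\mathbf{I}-\mathcal{P}_I)(W_I\beta^*_I-\mathbf{e})$. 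The paper instead proves the needed spectral and norm bounds \emph{uniformly over all $2^k$ subsets} $I_1\subseteq S$ (this is why Lemma~\ref{lem:restricted_e} carries the $e^{6(k+m)}$ factor and why $n\gtrsim k\log p$ is needed to absorb the union bound). This is not a pedantic point: it is precisely the implicit-independence error the paper attributes to \cite{cai2011omp}, so your write-up needs either the decoupling-plus-uniform-bound machinery or some substitute for it before the concentration steps are legitimate.
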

\begin{rem}
(1) If $ k $ is an upper bound of the actual sparsity, then mod-OMP identifies a size-$ k $ superset of the support of $ \beta^* $. (2) Considering the clean-covariate case, $\sigma_w = 0$, we obtain results that seem to be stronger (better) than previous results for OMP with Gaussian noise and clean covariates. The work in \cite{cai2011omp} obtains a similar condition on the non-zeros of
$\beta^{*}$, however, their proof seems to implicitly require an independence between the residual columns and the noise vector which does not hold, and hence we are unable to complete the argument.\footnote{More specifically, in \cite{cai2011omp}, the proof of Theorem 8 applies the results of their Lemma 3 to bound $\|X^{\top} (I - X_I (X_I^{\top}X_I)^{-1}X_i^{\top}) \mathbf{e})\|_{\infty}$. As far as we can tell, however, Lemma 3 applies only to $\|X^{\top} \mathbf{e}\|_{\infty}$ thanks to independence, which need not hold for the case in question.}
%(2) This theorem shows that the problems of estimating
%the support and estimating $\beta^{*}$ given the support are decoupled
%in some sense. In particular, support estimation does not require
%knowledge of $\Sigma_{w}$, $\Sigma_{x}$, or an IV.
\end{rem}
Once mod-OMP identifies the correct support, the problem reduces to a
low-dimensional one, which is exactly what we discuss in the previous
section. Thus, applying our low-dimensional results, we have the following
bounds on $\ell_{2}$ error, complementing the above results on support recovery.
\begin{cor}
\label{thm:omp_add} Consider the Independent sub-Gaussian model and Additive Noise model. If $n\gtrsim(1+\sigma_{w}^{2})^{2}k\log p$ and the non-zero entries of $\beta^{*}$ are greater than $\left(\sigma_{w}\left\Vert \beta^{*}\right\Vert _{2}+\sigma_{e}\right)\sqrt{1+\sigma_{w}^{2}}\sqrt{\frac{k\log p}{n}},$ then with high probability, the output of mod-OMP with estimator built from $(\hat{\Sigma},\hat{\gamma})$, satisfies:
\begin{enumerate}
\item (Knowledge of $\Sigma_w$): $\left\Vert \hat{\beta}-\beta^{*}\right\Vert _{2}\lesssim\left[\left(\sigma_{w}+\sigma_{w}^{2}\right)\left\Vert \beta^{*}\right\Vert _{2}+\sigma_{e}\sqrt{1+\sigma_{w}^{2}}\right]\sqrt{\frac{k\log p}{n}}.$
\item (Knowledge of $\Sigma_x$): \textup{$\left\Vert \hat{\beta}-\beta^{*}\right\Vert _{2}\lesssim\left[\left(1+\sigma_{w}\right)\left\Vert \beta^{*}\right\Vert _{2}+\sigma_{e}\sqrt{1+\sigma_{w}^{2}}\right]\sqrt{\frac{k\log p}{n}.}$ }
\item (Instrumental Variables): \textup{$\left\Vert \hat{\beta}-\beta^{*}\right\Vert _{2}\lesssim\left(\sigma_{w}\left\Vert \beta^{*}\right\Vert _{2}+\sigma_{e}\right)\frac{\sigma_{1}}{\sigma_{k}^{2}\sqrt{k/m}}\sqrt{\frac{k\log p}{n}}.$}
\end{enumerate}
\end{cor}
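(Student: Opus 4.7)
The plan is to combine Theorem \ref{thm:omp_add_support} with the low-dimensional corollaries of Section \ref{sec:low} in a two-stage argument: first establish that mod-OMP identifies the true support, and then invoke the low-dimensional bounds on the resulting $k$-dimensional sub-problem.

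First I would verify that the hypotheses of the present corollary imply those of Theorem \ref{thm:omp_add_support}. The sample-size assumption $n\gtrsim(1+\sigma_w^2)^2 k\log p$ is identical, and the lower bound $\min_{i\in\mathrm{supp}(\beta^*)}|\beta^*_i|\gtrsim(\sigma_w\|\beta^*\|_2+\sigma_e)\sqrt{1+\sigma_w^2}\sqrt{k\log p/n}$ dominates the Theorem \ref{thm:omp_add_support} threshold $16(\sigma_w\|\beta^*\|_2+\sigma_e)\sqrt{\log p/n}$ since $\sqrt{k(1+\sigma_w^2)}\ge 1$. Thus, on an event of probability at least $1-C_1 p^{-C_2}$, the index set $I$ returned after the support-recovery loop equals $S:=\mathrm{supp}(\beta^*)$.

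Next, I would condition on $\{I=S\}$ and analyze the final assignment step of Algorithm \ref{alg:1}, namely $\hat\beta_I=\hat\Sigma_{I,I}^{-1}\hat\gamma_I$ with $\hat\beta_{I^c}=0$. Since $\beta^*_{I^c}=0$, we have $\|\hat\beta-\beta^*\|_2=\|\hat\beta_I-\beta^*_I\|_2$, and the observation model restricts to $\mathbf{y}=X_I\beta^*_I+\mathbf{e}$ with corrupted covariates $Z_I=X_I+W_I$—exactly the low-dimensional additive-noise problem of Section \ref{sec:low} in dimension $k$. Applying Corollaries \ref{cor:LD_add_1}, \ref{cor:LD_add_2} and \ref{cor:LD_add_3} to this sub-problem (with $\Sigma_x$ replaced by its $k\times k$ principal submatrix indexed by $S$, whose minimum eigenvalue is bounded below by a constant under the independent sub-Gaussian model since $\Sigma_x=I$) gives, respectively, the three claimed $\ell_2$ bounds. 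In the IV case one also uses $\sqrt{\sigma_w^2\|\beta^*\|^2+\sigma_e^2}\le\sigma_w\|\beta^*\|_2+\sigma_e$ and absorbs $\sigma_u$ into the universal constant. Each low-dimensional bound holds on its own high-probability event, so a union bound with the support-recovery event yields the simultaneous conclusion.

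The hard part is essentially already done in Theorem \ref{thm:omp_add_support}; what remains is bookkeeping. The one subtle point is verifying that the sample complexities of the invoked low-dimensional corollaries are all subsumed by $n\gtrsim(1+\sigma_w^2)^2 k\log p$: for Corollary \ref{cor:LD_add_1} this is literally the same bound, for Corollary \ref{cor:LD_add_2} the requirement $n\gtrsim\log p$ is weaker, and for Corollary \ref{cor:LD_add_3} the additional factor involving $\sigma_u^2/((m/k)\sigma_k^2)$ is regarded as a constant of the instrumental-variable setup. Apart from that, the only care needed is to make sure the randomness used in the support-recovery stage and in the low-dimensional estimation stage are handled by a single union bound, which only affects universal constants in the statement of ``with high probability.''
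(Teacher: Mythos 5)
Your proposal is correct and follows exactly the route the paper itself takes: the paper offers no separate argument for this corollary beyond observing that Theorem \ref{thm:omp_add_support} guarantees exact support recovery, after which the restriction to the recovered support is precisely the low-dimensional additive-noise problem and Corollaries \ref{cor:LD_add_1}--\ref{cor:LD_add_3} apply. Your bookkeeping (checking the sample-size and signal-strength hypotheses subsume those of the invoked results, applying the low-dimensional bounds to the deterministic set $\mathrm{supp}(\beta^{*})$ and intersecting events via a union bound, and absorbing $\sigma_u$ and $\lambda_{\min}(\Sigma_x)=1$ into constants under the independent model) fills in the details the paper leaves implicit.
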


\subsection{Guarantees for Missing Data}
We provide a support-recovery result analogous to Theorem \ref{thm:omp_add_support} above.
\begin{thm}
\label{thm:omp_mis_support} Under the Independent sub-Gaussian Design model and missing data model, mod-OMP identifies the correct support of $\beta^{*}$ provided $n\gtrsim\frac{1}{(1-\rho)^{4}}k\log p$ and the non-zero entries of $\beta^{*}$ are greater than 
\[
\frac{16}{1-\rho}\left(\left\Vert \beta^{*}\right\Vert _{2}+\sigma_{e}\right)\sqrt{\frac{\log p}{n}}.
\]
\end{thm}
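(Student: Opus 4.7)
The plan is to follow the inductive structure of the proof of Theorem~\ref{thm:omp_add_support}, adapted to the multiplicative erasure model $Z = X \odot M$, where $M$ has i.i.d.\ Bernoulli$(1-\rho)$ entries. The induction hypothesis at iteration $j$ is that the current index set $I_{j-1}$ is contained in the true support $S$; to complete the induction it suffices to prove
\[
\max_{i \in S^{c}} \bigl|Z_{i}^{\top} \mathbf{r}\bigr| \;<\; \max_{i \in S \setminus I_{j-1}} \bigl|Z_{i}^{\top} \mathbf{r}\bigr|,
\]
where $\mathbf{r} = (I - P_{I_{j-1}})\mathbf{y}$ is the current residual and $P_{I_{j-1}}$ is projection onto the column span of $Z_{I_{j-1}}$. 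The mod-OMP selection rule then forces the next index into $S$.

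For the off-support upper bound, under the Independent sub-Gaussian Design Model the columns of $X$ (and hence of $Z$) are mutually independent. Since $\mathbf{r}$ depends only on $\{X_{j}, M_{j} : j \in S\} \cup \{M_{j} : j \in I_{j-1}\} \cup \{\mathbf{e}\}$, and $I_{j-1} \subseteq S$, every column $Z_{i}$ with $i \in S^{c}$ is independent of $\mathbf{r}$. Conditioning on $\mathbf{r}$, $Z_{i}^{\top}\mathbf{r}$ is sub-Gaussian with parameter $\|\mathbf{r}\|_{2}/\sqrt{n}$, so a union bound yields $\max_{i \in S^{c}}|Z_{i}^{\top}\mathbf{r}| \lesssim \|\mathbf{r}\|_{2}\sqrt{\log p / n}$ with high probability, and a standard concentration bound gives $\|\mathbf{r}\|_{2} \leq \|\mathbf{y}\|_{2} \lesssim \|\beta^{*}\|_{2} + \sigma_{e}$ uniformly across iterations. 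For the in-support lower bound, I would use $(I-P_{I})Z_{I} = 0$ together with the decomposition $X_{I} = Z_{I} + X_{I}\odot\bar{M}_{I}$, where $\bar M_{ij} = 1 - M_{ij}$, to write
\[
\mathbf{r} = (I - P_{I})\bigl[(X_{I}\odot\bar{M}_{I})\beta^{*}_{I} + X_{S\setminus I}\beta^{*}_{S\setminus I} + \mathbf{e}\bigr].
\]
Picking $i^{*} \in S\setminus I$ with $|\beta^{*}_{i^{*}}| = \max_{i \in S \setminus I}|\beta^{*}_{i}|$ and isolating the $X_{i^{*}}$ contribution, the dominant signal term is
\[
\beta^{*}_{i^{*}}\,Z_{i^{*}}^{\top}(I - P_{I})X_{i^{*}} = \beta^{*}_{i^{*}}\bigl[Z_{i^{*}}^{\top}X_{i^{*}} - Z_{i^{*}}^{\top}P_{I}X_{i^{*}}\bigr].
\]
The quadratic form $Z_{i^{*}}^{\top}X_{i^{*}} = \sum_{k} X_{k,i^{*}}^{2} M_{k,i^{*}}$ concentrates around $1-\rho$ by Hanson--Wright, while the projection correction $|Z_{i^{*}}^{\top}P_{I}X_{i^{*}}|$ has mean $(1-\rho)j/n$ and fluctuations of order $\sqrt{j\log p}/n$, both negligible compared with the signal under $n \gtrsim (1-\rho)^{-4} k \log p$.

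The main obstacle is controlling the remaining cross terms in $Z_{i^{*}}^{\top}\mathbf{r}$: the bilinear forms $\beta^{*}_{j} Z_{i^{*}}^{\top}(I - P_{I}) X_{j}$ for $j \in S \setminus (I \cup \{i^{*}\})$, the missing-data remainder $Z_{i^{*}}^{\top}(I - P_{I})(X_{I}\odot\bar{M}_{I})\beta^{*}_{I}$, and $Z_{i^{*}}^{\top}(I - P_{I})\mathbf{e}$. Unlike the additive-noise setting of Theorem~\ref{thm:omp_add_support}, the effective noise entries $X_{k,i}\bar{M}_{k,i}$ are not independent of $X$, so the earlier argument does not transfer verbatim. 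I would handle these by conditioning on the mask $M$ (which renders the erasures as deterministic diagonal rescalings of the sub-Gaussian columns of $X$) and applying Hanson--Wright-style tail bounds for bilinear/quadratic forms of independent sub-Gaussian variables to each cross term, exploiting that $M_{i^{*}}$ is independent of $\{X_{j}, M_{j}\}_{j \neq i^{*}}$. The remainder $(X_{I}\odot\bar{M}_{I})\beta^{*}_{I}$ is itself a zero-mean sub-Gaussian vector with entrywise variance $\rho(1-\rho)\|\beta^{*}_{I}\|_{2}^{2}/n$, so its inner product with $Z_{i^{*}}$ scales as $\|\beta^{*}\|_{2}\sqrt{\log p / n}$. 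Summing all noise contributions gives a bound of order $(\|\beta^{*}\|_{2}+\sigma_{e})\sqrt{\log p/n}$, and balancing this against the signal $\tfrac{1-\rho}{2}|\beta^{*}_{i^{*}}|$ produces the stated minimum-entry threshold $\frac{16}{1-\rho}(\|\beta^{*}\|_{2}+\sigma_{e})\sqrt{\log p / n}$. The sample-size condition $n \gtrsim (1-\rho)^{-4} k \log p$ is needed both to keep $Z_{I}^{\top}Z_{I}$ well-conditioned (with smallest eigenvalue bounded below by roughly $(1-\rho)^{2}/2$) throughout all $k$ iterations and to ensure the higher-order projection and bilinear-form fluctuations stay below the signal level.
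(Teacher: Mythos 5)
Your proposal follows the paper's proof of Theorem \ref{thm:omp_mis_support} almost step for step: the same induction, the same use of $(I-\mathcal{P}_I)Z_I=0$ to rewrite the residual as $(I-\mathcal{P}_I)\bigl(X_{I_r}\beta^*_{I_r}+(X_I-Z_I)\beta^*_I+\mathbf{e}\bigr)$ (your $X_I\odot\bar M_I$ is exactly the paper's $X_I-Z_I$), the same contraction-plus-column-independence bound for the erasure remainder and for $\mathbf{e}$, and the same independence/union-bound argument for the off-support inner products. The one place you genuinely diverge is the signal lower bound: the paper keeps the whole vector $Z_{I_r}^{\top}(I-\mathcal{P}_I)X_{I_r}\beta^*_{I_r}$, passes to $\ell_2$ via $\|\cdot\|_\infty\ge\|\cdot\|_2/\sqrt{|I_r|}$, and uses the uniform operator-norm bounds of Lemma \ref{lem:restricted_e} over all sub-blocks to get a smallest-eigenvalue bound of order $1-\rho$, whereas you isolate the single largest remaining coordinate $i^*$, concentrate the diagonal form $Z_{i^*}^{\top}X_{i^*}$ around $1-\rho$, and control the in-support cross terms separately. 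Both routes give the same threshold (indeed $\max_{i\in I_r}|\beta^*_i|\ge\|\beta^*_{I_r}\|_2/\sqrt{|I_r|}$, so your signal term is no weaker); the paper's eigenvalue route absorbs all cross terms into one matrix bound, while yours is more elementary per term but must bound the aggregated cross term $|Z_{i^*}^{\top}(I-\mathcal{P}_I)X_{J}\beta^*_{J}|$ by $\|\beta^*\|_2\sqrt{\log p/n}$ via independence of $Z_{i^*}$ from the remaining columns, which is fine. Two points to tighten: (i) the asserted independence of $\mathbf{r}$ from the off-support columns requires the oracle/decoupling device of Tropp that the paper invokes explicitly --- the event $I_{j-1}\subseteq S$ is itself determined by comparisons involving off-support columns, so the naive conditioning is circular without it; (ii) the entrywise variance of $(X_I\odot\bar M_I)\beta^*_I$ is $\rho\|\beta^*_I\|_2^2/n$ rather than $\rho(1-\rho)\|\beta^*_I\|_2^2/n$ --- harmless here, and retaining the factor $\rho$ at all is actually sharper than the paper's bound $\sigma_1(X_I-Z_I)\le 2$, which discards it and is the source of the non-vanishing bound as $\rho\to0$ noted in the paper's remark.
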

\begin{rem}
There is an unsatisfactory element to this bound, as it does not recover the clean-covariate results as $\rho \rightarrow 0$. Since the algorithm reduces to the standard OMP algorithm, this is a short falling of the analysis. At issue is a weakness in the concentration inequalities when $\rho$ is small. We note that the leading optimization-based bounds for performance under missing data, given in \cite{loh2011nonconvex}, seem to face the same problem. 
\end{rem}
We can combine the above theorem with Corollary \ref{cor:LD_mis_1} to obtain a bound on the $\ell_{2}$ error.
\begin{cor}
\label{thm:omp_mis} Consider the Independent sub-Gaussian model and Missing Data model. If $n \gtrsim\frac{1}{(1-\rho)^{4}}k\log p$ and the non-zero entries of $\beta^{*}$ are greater than $\frac{16}{1-\rho}\left(\left\Vert \beta^{*}\right\Vert _{2}+\sigma_{e}\right)\sqrt{\frac{\log p}{n}}$, then using our estimator for missing data, the output of mod-OMP satisfies
\[
\left\Vert \hat{\beta}-\beta^{*}\right\Vert _{2}\lesssim\left(\frac{1}{(1-\rho)^{2}}\left\Vert \beta^{*}\right\Vert _{2}+\frac{1}{1-\rho}\sigma_{e}\right)\sqrt{\frac{k\log p}{n}}.
\]

\end{cor}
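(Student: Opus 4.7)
The plan is to view this corollary as the natural composition of the two main pieces already established for the missing-data setting: the support-recovery guarantee (Theorem \ref{thm:omp_mis_support}) and the low-dimensional error bound (Corollary \ref{cor:LD_mis_1}). The hypotheses on $n$ and on the minimum non-zero magnitude of $\beta^*$ are precisely those appearing in Theorem \ref{thm:omp_mis_support}, so the first step is simply to invoke that theorem and conclude that, with high probability, the support $I$ returned by mod-OMP coincides with the true support $S := \mathrm{supp}(\beta^*)$.

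Conditional on the event $\{I = S\}$, the problem collapses to a low-dimensional regression on at most $k$ coordinates: the observation model reads $\mathbf{y} = X_I \beta^*_I + \mathbf{e}$, and the final step of mod-OMP computes $\hat{\beta}_I = \hat{\Sigma}_{I,I}^{-1} \hat{\gamma}_I$ using the missing-data estimator $\hat{\Sigma}_{I,I} = (Z_I^\top Z_I) \odot M$ and $\hat{\gamma}_I = \frac{1}{1-\rho} Z_I^\top \mathbf{y}$, with $\hat{\beta}_{I^c} = 0$. Since we are in the Independent sub-Gaussian Design model, the rows of $X_I$ are sub-Gaussian with covariance $\frac{1}{n} I$, so $\lambda_{\min}(\Sigma_{x,I}) = 1$. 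The sample-size hypothesis $n \gtrsim \frac{1}{(1-\rho)^4} k \log p$ therefore matches the hypothesis of Corollary \ref{cor:LD_mis_1} applied to the $k$-dimensional problem indexed by $I$, and applying that corollary directly yields
\[
\left\Vert \hat{\beta}_I - \beta^*_I \right\Vert_2 \;\lesssim\; \left( \frac{1}{(1-\rho)^2}\|\beta^*\|_2 + \frac{1}{1-\rho}\sigma_e \right) \sqrt{\frac{k\log p}{n}}.
\]
Because $\hat{\beta}_{I^c} = 0 = \beta^*_{I^c}$ on the complement, this is exactly $\|\hat{\beta} - \beta^*\|_2$, giving the stated bound.

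The only technical care needed is a union bound over the two high-probability events (correct support recovery from Theorem \ref{thm:omp_mis_support}, and the low-dimensional strong-convexity plus concentration events underlying Corollary \ref{cor:LD_mis_1}); since each holds with probability at least $1 - C_1 p^{-C_2}$, the combined event still holds with high probability in the paper's sense. I do not expect a real obstacle here: the main substantive work, namely controlling the greedy steps with the corrupted inner products $Z_i^\top \mathbf{r}$ and controlling the estimator $\hat{\Sigma}_{I,I}$ in the presence of erasures, has already been handled in the two cited results. One minor point worth a line of justification is that Corollary \ref{cor:LD_mis_1} was stated with $\beta^*$ of dimension $k$, whereas here $\beta^*_I$ has support of size at most $k$; this is harmless because restricting to $I$ produces exactly the low-dimensional model to which the corollary applies, with $\|\beta^*_I\|_2 = \|\beta^*\|_2$.
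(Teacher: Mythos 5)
Your proposal is correct and follows exactly the paper's intended argument: the paper itself introduces this corollary with ``We can combine the above theorem with Corollary \ref{cor:LD_mis_1} to obtain a bound on the $\ell_2$ error,'' i.e., support recovery via Theorem \ref{thm:omp_mis_support} followed by the low-dimensional missing-data bound on the recovered support, with $\lambda_{\min}(\Sigma_x)=1$ under the independent design. Your added remarks on the union bound and on restricting $\beta^*$ to $I$ are sound and fill in the (unstated) details correctly.
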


\section{Proofs}
\label{sec:proofs}

We prove all the results in this section. We make continued use of a few technical lemmas on concentration results. We provide the statement of these lemmas here, but postpone the proofs to the Appendix.

\subsection{Supporting Concentration Results}
\begin{lem}
\cite[Lemma 14]{loh2011nonconvex}\label{lem:covariance_concentration} Suppose
$Y\in\mathbb{R}^{n\times k}$ is a zero mean sub-Gaussian matrix with
parameter $(\frac{1}{n}\Sigma,\frac{1}{n}\sigma^{2})$. If $n\gtrsim\log p\ge\log k$,
then
\[
\mathbb{P}\left(\left\Vert Y^{\top}Y-\Sigma\right\Vert _{\infty}\ge c_{0}\sigma^{2}\sqrt{\frac{\log p}{n}}\right)\le c_{1}\exp\left(-c_{2}\log p\right).
\]

\end{lem}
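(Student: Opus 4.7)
The plan is to bound each entry of $Y^\top Y - \Sigma$ individually by a Bernstein-type concentration inequality for sums of independent sub-exponential random variables, and then take a union bound over the $k^2 \le p^2$ entries. The key observation is that a product of two sub-Gaussian random variables is sub-exponential, so for every index pair $(j,k)$,
\[
(Y^\top Y - \Sigma)_{jk} = \sum_{i=1}^n\bigl(Y_{ij}Y_{ik} - \mathbb{E}[Y_{ij}Y_{ik}]\bigr)
\]
is a centered sum of $n$ independent sub-exponential terms, which is exactly the setting Bernstein handles.

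To control the sub-exponential parameter uniformly, I would apply the polarization identity $Y_{ij}Y_{ik} = \tfrac14\bigl((Y_{ij}+Y_{ik})^2 - (Y_{ij}-Y_{ik})^2\bigr)$, reducing the off-diagonal case to squares of sub-Gaussians. Applying the sub-Gaussian assumption on rows of $Y$ to the unit vectors $(\mathbf{e}_j \pm \mathbf{e}_k)/\sqrt{2}$ shows that $Y_{ij} \pm Y_{ik}$ is sub-Gaussian with parameter $O(\sigma/\sqrt{n})$, and hence its square (centered) is sub-exponential with $\psi_1$-norm $O(\sigma^2/n)$. For the diagonal entries $(j=k)$, one may argue directly with $Y_{ij}^2$.

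With this sub-exponential parameter $K = O(\sigma^2/n)$ in hand, Bernstein's inequality yields
\[
\mathbb{P}\bigl(\bigl|(Y^\top Y - \Sigma)_{jk}\bigr| \ge \tau\bigr) \le 2\exp\!\left(-c\,\min\!\left(\frac{\tau^2 n}{\sigma^4},\,\frac{\tau n}{\sigma^2}\right)\right).
\]
Choosing $\tau = c_0\sigma^2\sqrt{\log p/n}$, the Gaussian term of the minimum equals $c_0^2\log p$ and the exponential term equals $c_0\sqrt{n\log p}$, which is at least $c_0\log p$ under the hypothesis $n\gtrsim\log p$. So each entry's deviation exceeds $\tau$ with probability at most $\exp(-c\log p)$; a union bound over the $k^2\le p^2$ entries inflates the prefactor by $p^2$ but leaves a bound of the advertised form $c_1\exp(-c_2\log p)$. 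The main technical nuisance is bookkeeping around the $1/n$ normalization built into the definition of the sub-Gaussian matrix, which affects the sub-exponential parameter of $Y_{ij}Y_{ik}$; after polarization this reduces to a textbook calculation, and the condition $n\gtrsim\log p$ is exactly what ensures we remain in the Gaussian regime of Bernstein rather than the slower exponential tail.
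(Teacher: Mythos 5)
Your argument is correct: polarization reduces each entry of $Y^{\top}Y-\Sigma$ to a centered sum of $n$ independent sub-exponential terms with $\psi_1$-parameter $O(\sigma^2/n)$, Bernstein gives the entrywise tail, the hypothesis $n\gtrsim\log p$ keeps the deviation $c_0\sigma^2\sqrt{\log p/n}$ in the sub-Gaussian branch of the minimum, and the union bound over $k^2\le p^2$ entries is absorbed by taking $c_0$ large. The paper does not prove this lemma itself but imports it verbatim as Lemma 14 of Loh and Wainwright, whose proof (and the deviation bound ``(70)'' that this paper invokes elsewhere) is essentially the same entrywise sub-exponential concentration argument you give, so there is nothing to add.
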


\begin{lem}\label{lem:Sigma_beta} Suppose $X\in\mathbb{R}^{n\times k}$, $Y\in\mathbb{R}^{n\times m}$
are zero-mean sub-Gaussian matrices with parameters $(\frac{1}{n}\Sigma_{x},\frac{1}{n}\sigma_{x}^{2})$,
$\left(\frac{1}{n}\Sigma_{y},\frac{1}{n}\sigma_{y}^{2}\right)$. Then for any fixed vectors $\mathbf{v}_1$, $\mathbf{v}_2$, we have
\[
\mathbb{P}\left(\left|\mathbf{v}_1^{\top}\left(Y^{\top}X-\mathbb{E}\left[Y^{\top}X\right]\right)\mathbf{v}_2\right|\ge t\left\Vert \mathbf{v}_1\right\Vert \left\Vert \mathbf{v}_2\right\Vert \right)\le3\exp\left(-cn\min\left\{ \frac{t^{2}}{\sigma_{x}^{2}\sigma_{y}^{2}},\frac{t}{\sigma_{x}\sigma_{y}}\right\} \right).
\]
In particular, if $n\gtrsim\log p \ge \log m \vee \log k$, we have w.h.p.
\[
\left|\mathbf{v}_1^{\top}\left(Y^{\top}X-\mathbb{E}\left[Y^{\top}X\right]\right)\mathbf{v}_2\right|\le\sigma_{x}\sigma_{y}\left\Vert \mathbf{v}_1\right\Vert \left\Vert \mathbf{v}_2\right\Vert \sqrt{\frac{\log p}{n}}.
\]
Setting $\mathbf{v}_1$ to be the $i^{th}$ standard basis vector, and using a union bound over $i=1,\ldots,m$,
we have w.h.p. \textup{
\[
\left\Vert \left(Y^{\top}X-\mathbb{E}\left[Y^{\top}X\right]\right)v\right\Vert _{\infty}\le\sigma_{x}\sigma_{y}\left\Vert v\right\Vert \sqrt{\frac{\log p}{n}}.
\]
}\end{lem}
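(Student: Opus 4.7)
The plan is to recognize $\mathbf{v}_1^{\top}(Y^{\top}X - \mathbb{E}[Y^{\top}X])\mathbf{v}_2$ as a sum of $n$ independent, centered, scalar random variables, each of which is the product of two sub-Gaussian factors, and then to apply a Bernstein-type tail bound. By homogeneity we may assume $\|\mathbf{v}_1\|=\|\mathbf{v}_2\|=1$. Letting $\mathbf{x}_i, \mathbf{y}_i$ denote the $i$-th rows of $X$ and $Y$ (as column vectors) and $U_i := \mathbf{v}_1^{\top}\mathbf{y}_i$, $V_i := \mathbf{v}_2^{\top}\mathbf{x}_i$, we have
\[
\mathbf{v}_1^{\top}\bigl(Y^{\top}X - \mathbb{E}[Y^{\top}X]\bigr)\mathbf{v}_2 \;=\; \sum_{i=1}^{n}\bigl(U_i V_i - \mathbb{E}[U_i V_i]\bigr),
\]
with the summands independent across $i$ by the sub-Gaussian matrix definition. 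By hypothesis $U_i$ is sub-Gaussian with parameter $\sigma_y/\sqrt{n}$ and $V_i$ with parameter $\sigma_x/\sqrt{n}$; no cross-independence between $X_i$ and $Y_i$ is required.

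Next I would establish that each centered product $U_i V_i - \mathbb{E}[U_iV_i]$ is sub-exponential with parameters of order $\sigma_x\sigma_y/n$. The cleanest route is the polarization identity $U_iV_i = \tfrac{1}{4}\bigl[(U_i+V_i)^2 - (U_i-V_i)^2\bigr]$: since $U_i\pm V_i$ are sub-Gaussian with parameter at most $(\sigma_x+\sigma_y)/\sqrt{n}$, the squares are sub-exponential with parameter on the order of $(\sigma_x+\sigma_y)^2/n$, and the general-case bound $\sigma_x\sigma_y/n$ follows by a standard rescaling (or directly by bounding $\mathbb{E}[\exp(\lambda U_iV_i)]$ via Cauchy--Schwarz on the MGFs of $U_i^2$ and $V_i^2$). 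Applying the Bernstein inequality for sums of $n$ independent centered sub-exponential random variables then yields
\[
\mathbb{P}\!\left(\Bigl|\sum_{i=1}^{n}(U_iV_i - \mathbb{E}[U_iV_i])\Bigr| \ge t\right) \le 3\exp\!\left(-cn\min\!\left\{\frac{t^2}{\sigma_x^2\sigma_y^2},\; \frac{t}{\sigma_x\sigma_y}\right\}\right),
\]
which, after restoring the factor $\|\mathbf{v}_1\|\|\mathbf{v}_2\|$, is exactly the first display of the lemma.

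Finally, to extract the w.h.p.\ statement, I would set $t = \sigma_x\sigma_y\sqrt{\log p / n}$; the hypothesis $n \gtrsim \log p$ forces $t/(\sigma_x\sigma_y) \le 1$, so the $t^2$-term controls the minimum and the probability collapses to $3\exp(-c\log p)$. The $\ell_\infty$ bound follows by choosing $\mathbf{v}_1 = \mathbf{e}_i$ for $i = 1,\dots,m$ and union-bounding; the assumption $\log p \ge \log m$ ensures the union cost is absorbed into the constants. I expect the main technical obstacle to be the product-of-sub-Gaussians step: one has to keep careful track of the $1/n$ scaling so that $U_iV_i$ lies in the sub-exponential regime with the right parameters, and one must argue without relying on any independence between $\mathbf{x}_i$ and $\mathbf{y}_i$. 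The polarization-plus-squaring trick is the cleanest way around this, since it reduces everything to the well-known fact that the square of a sub-Gaussian variable is sub-exponential with parameters controlled by the original sub-Gaussian norm.
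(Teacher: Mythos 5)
Your proposal is correct and follows essentially the same route as the paper: the paper also reduces the cross term to squared norms via the polarization identity $\mathbf{v}_1^{\top}(Y^{\top}X-\mathbb{E}[Y^{\top}X])\mathbf{v}_2=\tfrac12\left[\Phi(X\mathbf{v}_2+Y\mathbf{v}_1)-\Phi(X\mathbf{v}_2)-\Phi(Y\mathbf{v}_1)\right]$ with $\Phi(x)=\|x\|^2-\mathbb{E}\|x\|^2$, and then invokes a sub-exponential (Bernstein-type) concentration bound for these squared sub-Gaussian norms, which is exactly your row-by-row polarization plus Bernstein argument aggregated at the vector level. The only cosmetic difference is that the paper outsources the concentration of $\Phi$ to a cited lemma of Loh and Wainwright rather than deriving the scalar sub-exponential bounds directly.
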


As a simple corollary of this lemma, we get the following.
\begin{cor}
\label{cor:X_v_2norm} If $X\in\mathbb{R}^{n\times k}$ is a zero-mean
sub-Gaussian matrix with parameter $(\frac{1}{n}\Sigma_{x}^{2}I,\frac{1}{n}\sigma_{x}^{2})$,
and $\mathbf{v}$ is a fixed vector in $\mathbb{R}^{n}$, then for any $\epsilon\ge1$,
we have 
\[
\mathbb{P}\left(\left\Vert X^{\top}\mathbf{v}\right\Vert _{2}>\sqrt{\frac{(1+\epsilon)k}{n}}\sigma_{x}\left\Vert \mathbf{v}\right\Vert _{2}\right)\le3\exp\left(-ck\epsilon\right).
\]
\end{cor}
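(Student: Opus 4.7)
My plan is to prove this by a standard covering net argument applied to the sphere in $\mathbb{R}^k$, combined with the sub-Gaussian tail of a linear combination of independent sub-Gaussian rows of $X$. The intuition is that $\|X^\top \mathbf{v}\|_2 = \sup_{\mathbf{u}\in S^{k-1}} \mathbf{u}^\top X^\top \mathbf{v}$, and for each fixed unit vector $\mathbf{u}$ the scalar $\mathbf{u}^\top X^\top \mathbf{v}$ is a sum of $n$ independent zero-mean sub-Gaussians whose parameters are easy to control using the definition of a sub-Gaussian matrix.

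First, I would fix a $\tfrac{1}{2}$-net $\mathcal{N}$ of the unit sphere $S^{k-1}$ of cardinality at most $5^k$, for which the standard discretization bound gives $\|X^\top \mathbf{v}\|_2 \le 2\max_{\mathbf{u}\in\mathcal{N}}\mathbf{u}^\top X^\top \mathbf{v}$. For each fixed $\mathbf{u}\in\mathcal{N}$, write
\[
\mathbf{u}^\top X^\top \mathbf{v} \;=\; \sum_{i=1}^n v_i\bigl(\mathbf{x}_i^\top \mathbf{u}\bigr),
\]
where $\mathbf{x}_i$ is the $i$th row of $X$. By the second clause in the sub-Gaussian matrix definition, each $\mathbf{x}_i^\top \mathbf{u}$ is zero-mean sub-Gaussian with parameter $\tfrac{\sigma_x}{\sqrt n}$; since the rows are independent, $\sum_i v_i\mathbf{x}_i^\top\mathbf{u}$ is zero-mean sub-Gaussian with parameter at most $\tfrac{\sigma_x\|\mathbf{v}\|_2}{\sqrt n}$. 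A standard Chernoff bound then yields, for any $t>0$,
\[
\mathbb{P}\!\left(\bigl|\mathbf{u}^\top X^\top \mathbf{v}\bigr|>t\right) \;\le\; 2\exp\!\left(-\tfrac{n t^2}{2\sigma_x^2\|\mathbf{v}\|_2^2}\right).
\]

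Next, I would take a union bound over $\mathcal{N}$ and choose $t=\tfrac{1}{2}\sqrt{(1+\epsilon)k/n}\,\sigma_x\|\mathbf{v}\|_2$, so that the event $\|X^\top\mathbf{v}\|_2 > \sqrt{(1+\epsilon)k/n}\,\sigma_x\|\mathbf{v}\|_2$ is contained in $\{\max_{\mathbf{u}\in\mathcal{N}}|\mathbf{u}^\top X^\top\mathbf{v}|>t\}$ and therefore has probability at most
\[
2\cdot 5^k \exp\!\left(-\tfrac{(1+\epsilon)k}{8}\right)
\;=\; 2\exp\!\left(k\log 5 - \tfrac{(1+\epsilon)k}{8}\right).
\]
Under the hypothesis $\epsilon\ge 1$, the exponent dominates the $k\log 5$ term linearly in $\epsilon$, and absorbing the $\log 5$ into the constant gives a bound of the form $3\exp(-ck\epsilon)$ for a suitable universal $c>0$.

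The proof is essentially routine; the only mild subtlety is calibrating constants so that the $\log 5$ volumetric penalty from the net is swallowed into the exponent as $ck\epsilon$, which is precisely what the assumption $\epsilon\ge 1$ is there to buy. No genuine obstacle arises beyond this bookkeeping.
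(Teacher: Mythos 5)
Your route is genuinely different from the paper's: you discretize the sphere $S^{k-1}$ and union-bound scalar sub-Gaussian tails, whereas the paper never introduces a net at all. It writes $\|X^{\top}\mathbf{v}\|_2^2 = \mathbf{v}^{\top}XX^{\top}\mathbf{v} \le |\mathbf{v}^{\top}(XX^{\top}-\tfrac{k}{n}\sigma_x^2 I)\mathbf{v}| + \tfrac{k}{n}\sigma_x^2\|\mathbf{v}\|_2^2$ and applies the quadratic-form concentration of Lemma \ref{lem:Sigma_beta} (with $X^{\top}$ viewed as a $k\times n$ sub-Gaussian matrix and $t=\tfrac{k}{n}\sigma_x^2\epsilon$) to the fluctuation term, which directly yields probability $3\exp(-ck\min\{\epsilon^2,\epsilon\})=3\exp(-ck\epsilon)$ for $\epsilon\ge 1$ with no volumetric penalty.

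The difference matters, because your final absorption step has a genuine gap. With a $\tfrac12$-net of size $5^k$ and $t=\tfrac12\sqrt{(1+\epsilon)k/n}\,\sigma_x\|\mathbf{v}\|_2$, your exponent is $k\log 5 - \tfrac{(1+\epsilon)k}{8}$. For this to be negative at all you need $1+\epsilon > 8\log 5 \approx 12.9$; for $\epsilon$ in the range $[1,\,12]$ the right-hand side of your bound exceeds $2e^{1.3k}$ and therefore cannot be dominated by $3\exp(-ck\epsilon)$ for any universal $c>0$. The claim that "the exponent dominates the $k\log 5$ term linearly in $\epsilon$" is only true for $\epsilon$ above an absolute constant much larger than $1$, and tuning the net granularity does not help: a $\delta$-net forces the exponent $\tfrac{(1-\delta)^2(1+\epsilon)k}{2} - k\log(1+2/\delta)$, and at $\epsilon=1$ this is negative for every $\delta\in(0,1)$. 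The corollary is stated (and needed) with the constant-free threshold $\sqrt{(1+\epsilon)k/n}\,\sigma_x\|\mathbf{v}\|_2$ for all $\epsilon\ge 1$, i.e., only a factor $\sqrt{1+\epsilon}$ above the root-mean-square of $\|X^{\top}\mathbf{v}\|_2$, and a net-plus-union-bound argument intrinsically pays an additive $\Theta(k)$ in the exponent that cannot be absorbed at that scale. To close the gap you should concentrate the scalar $\|X^{\top}\mathbf{v}\|_2^2$ around its mean directly (Hanson--Wright, or the paper's Lemma \ref{lem:Sigma_beta}), exploiting that $\mathbf{v}$ is a single fixed vector so that no supremum over $\mathbb{R}^k$ is ever required.
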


\begin{lem}
\label{lem:restricted_e} If $X\in\mathbb{R}^{n\times k}$, $Y\in\mathbb{R}^{n\times m}$
are zero mean sub-Gaussian matrices with parameter $(\frac{1}{n}\Sigma_{x},\frac{1}{n}\sigma_{x}^{2})$,$(\frac{1}{n}\Sigma_{y},\frac{1}{n}\sigma_{y}^{2})$,
then 
\[
\mathbb{P}\left(\sup_{\mathbf{v}_1\in\mathbb{R}^{m},\mathbf{v}_2\in\mathbb{R}^{k},\left\Vert \mathbf{v}_1\right\Vert =\left\Vert \mathbf{v}_2\right\Vert =1}\left|\mathbf{v}_1^{\top}\left(Y^{\top}X-\mathbb{E}\left[Y^{\top}X\right]\right)\mathbf{v}_2\right|\ge t\right)\le2\exp\left(-cn\min(\frac{t^{2}}{\sigma_{x}^{2}\sigma_{y}^{2}},\frac{t}{\sigma_{x}\sigma_{y}})+6(k+m)\right).
\]
In particular, for each $\lambda>0$, if $n\gtrsim\max\left\{ \frac{\sigma_{x}^{2}\sigma_{y}^{2}}{\lambda^{2}},1\right\} (k+m)\log p$,
then w.h.p.
\begin{eqnarray*}
\sup_{\mathbf{v}_1\in\mathbb{R}^{m},\mathbf{v}_2\in\mathbb{R}^{k}}\left|\mathbf{v}_1^{\top}\left(Y^{\top}X-\mathbb{E}\left[Y^{\top}X\right]\right)\mathbf{v}_2\right| & \le & \frac{1}{54}\lambda\left\Vert \mathbf{v}_1\right\Vert \left\Vert \mathbf{v}_2\right\Vert.
\end{eqnarray*}
\end{lem}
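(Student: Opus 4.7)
The plan is to recognize the supremum as the operator norm $\|M\|_{\mathrm{op}}$ of the centered random matrix $M := Y^\top X - \mathbb{E}[Y^\top X]$, and then to bound $\|M\|_{\mathrm{op}}$ by the standard $\epsilon$-net discretization combined with the pointwise sub-exponential tail bound that already underlies Lemma~\ref{lem:Sigma_beta}.

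First I would fix a small $\epsilon \in (0, 1/2)$, say $\epsilon = 1/4$, and choose $\epsilon$-nets $\mathcal{N}_m \subset S^{m-1}$ and $\mathcal{N}_k \subset S^{k-1}$ of cardinalities at most $(1+2/\epsilon)^m \le 9^m$ and $9^k$ respectively. The standard net-approximation step then yields
\[
\|M\|_{\mathrm{op}} \;\le\; \frac{1}{1-2\epsilon}\, \max_{u \in \mathcal{N}_m,\, v \in \mathcal{N}_k} \bigl|u^\top M v\bigr| \;=\; 2\,\max_{u,v}\bigl|u^\top M v\bigr|,
\]
so that $\{\|M\|_{\mathrm{op}} \ge t\} \subseteq \{\max_{(u,v)} |u^\top M v| \ge t/2\}$.

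Next, for each fixed pair $(u,v)$ in the net, the bilinear form $u^\top Y^\top X v = \sum_{i=1}^n (u^\top \mathbf{y}_i)(\mathbf{x}_i^\top v)$ is a sum of independent products of sub-Gaussian random variables, i.e.\ a sum of independent sub-exponentials with appropriate Orlicz parameters; the Bernstein-type tail bound for such sums (exactly the one behind the pointwise inequality of Lemma~\ref{lem:Sigma_beta}) gives
\[
\mathbb{P}\bigl(|u^\top M v| \ge \tau\bigr) \;\le\; 3\exp\!\left(-c' n \min\!\left\{\frac{\tau^2}{\sigma_x^2\sigma_y^2},\; \frac{\tau}{\sigma_x \sigma_y}\right\}\right).
\]
I would then apply this with $\tau = t/2$ and take a union bound over the $|\mathcal{N}_m| \cdot |\mathcal{N}_k| \le 9^{k+m}$ pairs. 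Since $\log 9 < 6$, the net cardinality and the factor $3$ in front are absorbed by the $6(k+m)$ summand in the stated exponent, yielding the claimed tail inequality.

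The "in particular" statement then follows by plugging $t = \lambda/54$ (the constant $1/54$ is chosen so that the factor $2$ lost in the net step together with the absolute constant from Bernstein collapse to this value) into the tail bound: under $n \gtrsim \max\{\sigma_x^2\sigma_y^2/\lambda^2, 1\}\,(k+m)\log p$ the quadratic branch of the $\min$ dominates, and $c'n \cdot t^2/(\sigma_x \sigma_y)^2$ is at least a constant multiple of $(k+m)\log p$, which swamps the $6(k+m)$ cost and produces a bound of order $\exp(-C_2 \log p)$. The only real obstacle I foresee is constant-bookkeeping, i.e.\ choosing $\epsilon$ so that both the net-cardinality exponent $\log(1+2/\epsilon)$ and the approximation loss $1/(1-2\epsilon)$ are compatible with the stated $6(k+m)$ and $1/54$; no new probabilistic input is needed beyond the standard Hanson--Wright-style bilinear tail already used for Lemma~\ref{lem:Sigma_beta}.
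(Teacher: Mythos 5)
Your proposal is correct and follows essentially the same route as the paper: discretize the bilinear form over $\epsilon$-nets of the two unit spheres/balls, apply the pointwise sub-exponential (Bernstein-type) tail bound underlying Lemma~\ref{lem:Sigma_beta} to each net pair, and absorb the net cardinality and approximation-loss constants into the $6(k+m)$ term and the $1/54$ factor via a union bound. The only cosmetic difference is that the paper uses a $1/3$-cover of the unit balls with an explicit bilinear splitting giving a factor $9/2$, whereas you use the standard $1/(1-2\epsilon)$ sphere-net bound with $\epsilon=1/4$; both are routine variants of the same argument.
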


\subsection{Proof of Corollary \ref{cor:LD_add_1}}
\begin{proof}
Substituting $Z=X+W$ into the definition of $\hat{\gamma}$ and $\hat{\Sigma}$,
we obtain
\begin{eqnarray*}
\left\Vert \hat{\gamma}-\hat{\Sigma}\beta^{*}\right\Vert _{\infty} & = & \left\Vert -X^{\top}W\beta^{*}+Z^{\top}e-(W^{\top}W-\Sigma_{w})\beta^{*}\right\Vert _{\infty}\\
 & \le & \left\Vert X^{\top}W\beta^{*}\right\Vert _{\infty}+\left\Vert Z^{\top}e\right\Vert _{\infty}+\left\Vert (W^{\top}W-\Sigma_{w})\beta^{*}\right\Vert _{\infty}
\end{eqnarray*}
Using Lemma \ref{lem:Sigma_beta}, we have w.h.p.
\begin{eqnarray*}
\left\Vert X^{\top}W\beta^{*}\right\Vert _{\infty} & \le & \sigma_{w}\left\Vert \beta\right\Vert _{2}\sqrt{\frac{\log p}{n}}\\
\left\Vert Z^{\top}e\right\Vert _{\infty} & \le & \sigma_{e}\sqrt{1+\sigma_{w}^{2}}\sqrt{\frac{\log p}{n}}\\
\left\Vert (W^{\top}W-\Sigma_{w})\beta^{*}\right\Vert _{\infty} & \le & \sigma_{w}^{2}\left\Vert \beta\right\Vert _{2}\sqrt{\frac{\log p}{n}}.
\end{eqnarray*}
It follows that
\[
\left\Vert \hat{\gamma}-\hat{\Sigma}\beta^{*}\right\Vert _{2}\le\sqrt{k}\left\Vert \hat{\gamma}-\hat{\Sigma}\beta^{*}\right\Vert _{\infty}\le\left[\left(\sigma_{w}+\sigma_{w}^{2}\right)\left\Vert \beta^{*}\right\Vert _{2}+\sigma_{e}\sqrt{1+\sigma_{w}^{2}}\right]\sqrt{\frac{k\log p}{n}.}
\]

On the other hand, observe that $Z$ is sub-Gaussian with parameter
$(\frac{1}{n}\Sigma_{x}+\frac{1}{n}\Sigma_{w},\frac{1}{n}(1+\sigma_{w}^{2}))$.
When $n\gtrsim\frac{(1+\sigma_{w}^{2})^{2}k\log p}{\lambda^2_{\min}(\Sigma_{x})}$,
by Lemma \ref{lem:restricted_e} with $\lambda=\lambda_{\min}(\Sigma_{x})$,
we have $\lambda_{1}\left(Z^{\top}Z-(\Sigma_{x}+\Sigma_{w})\right)\le\frac{1}{54}\lambda_{\min}(\Sigma_{x})$ w.h.p.
It follows that
\begin{eqnarray*}
\lambda_{\min}\left(\hat{\Sigma}\right)=\inf_{\left\Vert v\right\Vert =1}v^{\top}\hat{\Sigma}v & = & \inf_{\left\Vert v\right\Vert =1}v^{\top}\left(\Sigma_{x}+Z^{\top}Z-(\Sigma_{x}+\Sigma_{w})\right)v\\
 & \ge & \lambda_{\min}(\Sigma_{x})-\lambda_{1}\left(Z^{\top}Z-(\Sigma_{x}+\Sigma_{w})\right)\\
 & \ge & \frac{1}{2}\lambda_{\min}(\Sigma_{x}).
\end{eqnarray*}
The proposition then follows by applying Theorem \ref{prop:lowD_general}.
\end{proof}

\subsection{Proof of Corollary \ref{cor:LD_add_2}}
\begin{proof}
In this case, we have
\begin{eqnarray*}
\left\Vert \hat{\gamma}-\hat{\Sigma}\beta^{*}\right\Vert _{\infty} & = & \left\Vert (X^{\top}X-\Sigma_{x})\beta^{*}+W^{\top}X\beta^{*}+Z^{\top} \mathbf{e} \right\Vert _{\infty}\\
 & \le & \left\Vert W^{\top}X\beta^{*}\right\Vert _{\infty}+\left\Vert Z^{\top} \mathbf{e} \right\Vert _{\infty}+\left\Vert (X^{\top}X-\Sigma_{x})\beta^{*}\right\Vert _{\infty}
\end{eqnarray*}
By Lemma \ref{lem:Sigma_beta}, we have w.h.p.
\begin{eqnarray*}
\left\Vert W^{\top}X\beta^{*}\right\Vert _{\infty} & \le & \sigma_{w}\left\Vert \beta\right\Vert _{2}\sqrt{\frac{\log p}{n}}\\
\left\Vert Z^{\top} \mathbf{e} \right\Vert _{\infty} & \le & \sigma_{e}\sqrt{1+\sigma_{w}^{2}}\sqrt{\frac{\log p}{n}}\\
\left\Vert (X^{\top}X-\Sigma_{x})\beta^{*}\right\Vert _{\infty} & \le & \left\Vert \beta^{*}\right\Vert _{2}\sqrt{\frac{\log p}{n}}.
\end{eqnarray*}
So $\left\Vert \hat{\gamma}-\hat{\Sigma}\beta^{*}\right\Vert _{2} \le \sqrt{k} \left\Vert \hat{\gamma}-\hat{\Sigma}\beta^{*}\right\Vert _{\infty}\lesssim\left[\left(1+\sigma_{w}\right)\left\Vert \beta^{*}\right\Vert _{2}+\sigma_{e}\sqrt{1+\sigma_{w}^{2}}\right]\sqrt{\frac{k\log p}{n}.}$
On the other hand, by assumption $\lambda_{\min}(\hat{\Sigma})=\lambda_{\min}(\Sigma_{x})$.
The proposition then follows by applying Theorem \ref{prop:lowD_general}.
\end{proof}

\subsection{Proof of Corollary \ref{cor:LD_add_3}}
\begin{proof}
First observe that
\begin{eqnarray*}
\lambda_{\min}(\hat{\Sigma}) & = & \lambda_{\min}((X+W){}^{\top}UU^{\top}(X+W))\\
 & = & \sigma_{k}^{2}(U^{\top}X+U^{\top}W)\\
 & = & \sigma_{k}^{2}(\mathbb{E}\left[U^{\top}X\right]+(U^{\top}X-\mathbb{E}\left[U^{\top}X\right])+U^{\top}W)\\
 & \ge & \left[\sigma_{k}-\sigma_{1}(U^{\top}X-\mathbb{E}\left[U^{\top}X\right])-\sigma_{1}(U^{\top}W)\right]^{2}.
\end{eqnarray*}
By Lemma \ref{lem:restricted_e} with $\lambda=\sigma_{k}$, we have
$\sigma_{1}(U^{\top}W)\le\frac{1}{4}\sigma_{k}$ and $\sigma_{1}(U^{\top}X-\mathbb{E}\left[U^{\top}X\right])\le\frac{1}{4}\sigma_{k}$
under our assumption, so $\lambda_{\min}(\hat{\Sigma})\ge\frac{1}{4}\sigma_{k}^{2}.$
On the other hand,
\begin{eqnarray*}
\left\Vert \hat{\Sigma}\beta^{*}-\hat{\gamma}\right\Vert _{2} & = & \left\Vert (X+W)^{\top}UU^{^{\top}}(W\beta^{*}-\mathbf{e})\right\Vert _{2}\\
 & \le & \left\Vert X^{\top}UU^{\top}(W\beta^{*}+\mathbf{e})\right\Vert _{2}+\left\Vert W^{\top}UU^{\top}(W\beta^{*}+ \mathbf{e})\right\Vert _{2}.
\end{eqnarray*}
We bound each term.
\begin{enumerate}
\item By Lemma \ref{lem:restricted_e} with $\lambda=\sigma_{1}$, we have
$\sigma_{1}(U^{\top}X)\le\frac{3}{2}\sigma_{1}$. Each entry of $W\beta^{*}+e$
is i.i.d. zero-mean sub-Gaussian with variance bounded by $\sigma_{w}^{2}\left\Vert \beta^{*}\right\Vert ^{2}+\sigma_{e}^{2}$.
Hence by Lemma \ref{lem:Sigma_beta}, $\left\Vert U^{\top}(W\beta^{*}+\mathbf{e})\right\Vert _{2}\le\sqrt{m}\left\Vert U^{\top}(W\beta^{*}+\mathbf{e})\right\Vert _{\infty}\le\sigma_{u}\sqrt{\sigma_{w}^{2}\left\Vert \beta^{*}\right\Vert ^{2}+\sigma_{e}^{2}}\sqrt{\frac{m\log p}{n}}$.
It follows that $\left\Vert \left(U^{\top}X\right)^{\top}U^{\top}(W\beta^{*}+\mathbf{e})\right\Vert _{2}\le2\sqrt{\sigma_{w}^{2}\left\Vert \beta^{*}\right\Vert ^{2}+\sigma_{e}^{2}}\sqrt{\frac{\sigma_{u}^{2}\sigma_{1}^{2}m\log p}{n}}$.
\item By Lemma \ref{lem:restricted_e} with $\lambda=\sigma_{1}$, we have
$\left\Vert W^{\top}U\right\Vert _{op}\le\sigma_{1}$ under the assumption,
so the second term is bounded by $\sigma_{w}\left\Vert \beta^{*}\right\Vert \sqrt{\frac{\sigma_{u}^{2}\sigma_{1}^{2}m\log p}{n}}$.
\end{enumerate}
The result follows from applying Theorem \ref{prop:lowD_general}.
\end{proof}

\subsection{Proof of Corollary \ref{cor:LD_mis_1}}
\begin{proof}
Let $\Sigma_{z}=\mathbb{E}\left[Z^{\top}Z\right]$; we have $(\Sigma_{z})_{ij}=(1-\rho)(\Sigma_{x})_{ij}$
for $i=j$ and $(\Sigma_{z})_{ij}=(1-\rho)^{2}(\Sigma_{x})_{ij}$
for $i\neq j$. Note that the observed matrix $Z$ is sub-Gaussian
with parameter $(\frac{1}{n}\Sigma_{z},\frac{1}{n})$, which follows
from the sub-Gaussianity of $X$ (c.f. \cite{loh2011nonconvex}).
We set $\Delta_{z}=Z^{\top}Z-\Sigma_{z}$. By Lemma \ref{lem:covariance_concentration},
we know $\max_{i}\left|(\Delta_{z})_{ii}\right|\le\frac{1}{4}(1-\rho)^{2}\lambda_{\min}(\Sigma_{x})$
w.h.p. When this happens, for each unit norm $v$, we have
\begin{eqnarray*}
v^{\top}(\Delta_{z}\odot M)v & = & \sum_{i,j}v_{i}v_{j}(\Delta_{z})_{ij}\frac{1}{(1-\rho)^{2}}+\sum_{i}v_{i}^{2}(\Delta_{z})_{ii}\left(\frac{1}{1-\rho}-\frac{1}{(1-\rho)^{2}}\right)\\
 & \le & \frac{1}{(1-\rho)^{2}}v^{\top}\Delta_{z}v+\left\Vert v\right\Vert ^{2}\frac{\rho}{(1-\rho)^{2}}\max_{i}\left|(\Delta_{z})_{ii}\right|\\
 & \le & \frac{1}{(1-\rho)^{2}}v^{\top}\Delta_{z}v+\frac{\rho}{4}\lambda_{\min}(\Sigma_{x}).
\end{eqnarray*}
By Lemma \ref{lem:restricted_e} with $\lambda=\frac{1}{4}(1-\rho)^{2}\lambda_{\min}(\Sigma_{x})$,
we obtain $\max_{v:\left\Vert v\right\Vert =1}v^{\top}\Delta_{z}v\le\frac{1}{4}(1-\rho)^{2}\lambda_{\min}(\Sigma_{x})$,
so $v^{\top}(\Delta_{z}\odot M)v\le\frac{1}{4}(1+\rho)\lambda_{\min}(\Sigma_{x})$.
Because $\hat{\Sigma}=(\Sigma_{z}+Z^{\top}Z-\Sigma_{z})\odot M=\Sigma_{x}+\Delta_{z}\odot M$,
it follows that $\lambda_{\min}(\hat{\Sigma})\ge\lambda_{\min}(\Sigma_{x})-\lambda_{1}\left(\Delta_{z}\odot M\right)\ge\frac{1}{2}\lambda_{\min}(\Sigma_{x})$.

On the other hand, observe that

\begin{eqnarray*}
\left\Vert \hat{\gamma}-\hat{\Sigma}\beta^{*}\right\Vert _{\infty} & \le & \left\Vert \hat{\gamma}-\Sigma_{x}\beta^{*}\right\Vert _{\infty}+\left\Vert (\hat{\Sigma}-\Sigma_{x})\beta^{*}\right\Vert _{\infty}\\
 & \le & \left\Vert \frac{1}{1-\rho}Z^{\top}X\beta^{*}-\Sigma_{x}\beta^{*}\right\Vert _{\infty}+\left\Vert \frac{1}{1-\rho}Z^{\top} \mathbf{e} \right\Vert _{\infty}+\left\Vert (\hat{\Sigma}-\Sigma_{x})\beta^{*}\right\Vert _{\infty}.
\end{eqnarray*}
By Lemma \ref{lem:Sigma_beta}, w.h.p. the first term is bounded by
$\frac{1}{1-\rho}\left\Vert \beta^{*}\right\Vert \sqrt{\frac{\log p}{n}}$,
and the second term is bounded by $\frac{1}{1-\rho}\sigma_{e}\sqrt{\frac{\log p}{n}}$.
The magnitude of the $i$-th term of $(\hat{\Sigma}-\Sigma_{x})\beta^{*}$
is
\begin{eqnarray*}
\left|((Z^{\top}Z-\mathbb{E}\left[Z^{\top}Z\right])_{i-}\odot M_{i-})\beta^{*}\right| &=& \left|(Z^{\top}Z-\mathbb{E}\left[Z^{\top}Z\right])_{i-}(M_{i-}^{\top}\odot\beta^{*})\right| \\
&\le& \left\Vert (Z^{\top}Z-\mathbb{E}\left[Z^{\top}Z\right])(M_{i-}^{\top}\odot\beta^{*})\right\Vert _{\infty}.
\end{eqnarray*}
Note that we use $M_{i-}$ to denote the $i^{th}$ row of the matrix $M$.

Thus, by Lemma \ref{lem:Sigma_beta} and union bound over $i$, we have
\begin{eqnarray*}
\left\Vert (\hat{\Sigma}-\Sigma_{x})\beta^{*}\right\Vert _{\infty} &\le& \max_{i=1,\ldots n}\left\Vert (Z^{\top}Z-\mathbb{E}\left[Z^{\top}Z\right])(M_{i-}^{\top}\odot\beta^{*})\right\Vert _{\infty} \\ 
&\le& \sqrt{\frac{\log p}{n}}\max_{i}\left\Vert M_{i-}^{\top}\odot\beta^{*}\right\Vert _{2} \\
&\le& \frac{1}{(1-\rho)^{2}}\left\Vert \beta^{*}\right\Vert _{\infty}\sqrt{\frac{\log p}{n}}.
\end{eqnarray*}
Combining pieces, we have 
\begin{eqnarray*} 
\left\Vert \hat{\gamma}-\hat{\Sigma}\beta^{*}\right\Vert _{2} &\le& \sqrt{k}\left\Vert \hat{\gamma}-\hat{\Sigma}\beta^{*}\right\Vert _{\infty} \\ 
&\le& \left(\frac{1}{(1-\rho)^{2}}\left\Vert \beta^{*}\right\Vert _{2}+\frac{1}{1-\rho}\sigma_{e}\right)\sqrt{\frac{k\log p}{n}}.
\end{eqnarray*}
The corollary follows by applying Theorem \ref{prop:lowD_general}.
\end{proof}

\subsection{Proof of Theorem \ref{thm:omp_add_support}}

We use induction. The inductive assumption is that the previous steps identify a
subset $I$ of the true support $I^{*}={\rm supp}(\beta^{*})$. Let $I_{r}=I^{*}-I$
be the remaining true support that is yet to be identified. We need
to prove that at the current step, mod-OMP picks an index in $I_{r}$,
i.e., $\left\Vert h_{I_{r}}\right\Vert _{\infty}>\left|h_{i}\right|$
for all $i\in(I^{*})^{c}$.

We use a decoupling argument similar to \cite{tropp2007OMP}: consider
the oracle which runs mod-OMP over only the true support $I^{*}$. Then
our mod-OMP identifies $I^{*}$ if and only if it identifies it in the
same order as the oracle. Therefore we can assume $I$ to be independent
of $X_{i}$ and $W_{i}$ for all $i\in(I^{*})^{c}$. Note that $I$
may still depend on $X_{I^{*}}$, $W_{I^{*}}$, and $\mathbf{e}$.

Define $\mathcal{P}_{I}\triangleq Z_{I}(Z_{I}^{\top}Z_{I})^{-1}Z_{I}^{\top}.$
We have
\begin{eqnarray}
 & & \left\Vert h_{I_{r}}\right\Vert _{\infty} \nonumber \\
 & = & \left\Vert Z_{I_{r}}^{\top}r\right\Vert _{\infty}\nonumber \\
 & = & \left\Vert Z_{I_{r}}^{\top}(I-\mathcal{P}_{I})(X_{I^{*}}\beta_{I^{*}}+\mathbf{e})\right\Vert _{\infty}\nonumber \\
 & = & \left\Vert Z_{I_{r}}^{\top}(I-\mathcal{P}_{I})(Z_{I^{*}}\beta_{I^{*}}^{*}-W_{I^{*}}\beta_{I^{*}}^{*}+\mathbf{e})\right\Vert _{\infty}\nonumber \\
 & = & \left\Vert Z_{I_{r}}^{\top}(I-\mathcal{P}_{I})(Z_{I_{r}}\beta_{I_{r}}^{*}-W_{I^{*}}\beta_{I^{*}}^{*}+\mathbf{e})\right\Vert _{\infty}\nonumber \\
 & \underset{=}{(a)} & \left\Vert X_{I_{r}}^{\top}(I-\mathcal{P}_{I})X_{I_{r}}\beta_{I_{r}}^{*}+W_{I_{r}}^{\top}(I-\mathcal{P}_{I})X_{I_{r}}\beta_{I_{r}}^{*}-Z_{I_{r}}^{\top}(I-\mathcal{P}_{I})W_{I}^{\top}\beta_{I}+Z_{I_{r}}^{\top}(I-\mathcal{P}_{I})\mathbf{e}\right\Vert _{\infty}\nonumber \\
\nonumber \\
 & \ge & \!\! \frac{1}{\sqrt{k-i}}\left(\left\Vert X_{I_{r}}^{\top}(I-\mathcal{P}_{I})X_{I_{r}}\beta_{I_{r}}^{*}\right\Vert _{2} \!-\! \left\Vert W_{I_{r}}^{\top}(I-\mathcal{P}_{I})X_{I_{r}}\beta_{I_{r}}^{*}\right\Vert _{2} \!-\! \left\Vert Z_{I_{r}}^{\top}(I-\mathcal{P}_{I})(W_{I}\beta_{I}-\mathbf{e})\right\Vert _{2}\right),\label{eq:h_good}
\end{eqnarray}
where (a) follows from substituting $ Z=X+W $. For the first term, we have the following lemma.
\begin{lem}
Under the assumptions of Theorem \ref{thm:omp_add_support}, w.h.p.
$\forall I_{1}\subseteq I^{*},$ $I_{1}^{c}\triangleq I^{*}-I_{1}$,
\begin{eqnarray*}
\lambda_{\min}\left(X_{I_{1}^{c}}^{\top}(I-\mathcal{P}_{I_{1}})X_{I_{1}^{c}}\right) & \ge & \frac{1}{2},\\
\lambda_{\max}\left(W_{I_{1}^{c}}^{\top}(I-\mathcal{P}_{I_{1}})X_{I_{1}^{c}}\right) & \le & \frac{1}{8}.
\end{eqnarray*}
\end{lem}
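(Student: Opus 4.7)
The plan is to establish each bound for a fixed $I_1$ via a decomposition, then take a union bound over the $2^k$ subsets $I_1 \subseteq I^*$; since $k \le \log p$ in the intended regime, the $k \log 2$ penalty is absorbed into the $\log p$ tails of our concentration lemmas. The key structural point is that under the Independent sub-Gaussian Design model, the columns of $X$ and $W$ indexed by $I_1^c$ are independent of those indexed by $I_1$. Consequently $X_{I_1^c}$ and $W_{I_1^c}$ are independent of $Z_{I_1} = X_{I_1}+W_{I_1}$, hence of the projection $\mathcal{P}_{I_1}$, which I will treat as a fixed rank-$|I_1|$ projection after conditioning.

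For the first bound, write $X_{I_1^c}^{\top}(I-\mathcal{P}_{I_1})X_{I_1^c} = X_{I_1^c}^{\top}X_{I_1^c} - X_{I_1^c}^{\top}\mathcal{P}_{I_1}X_{I_1^c}$. Since $\mathbb{E}[X_{I_1^c}^{\top}X_{I_1^c}] = I$, Lemma \ref{lem:restricted_e} applied with $(Y,X) = (X_{I_1^c}, X_{I_1^c})$ and $\lambda = 1/4$ yields $\lambda_{\min}(X_{I_1^c}^{\top}X_{I_1^c}) \ge 3/4$ once $n \gtrsim k\log p$. Factoring $\mathcal{P}_{I_1} = UU^{\top}$ with $U \in \mathbb{R}^{n \times |I_1|}$ orthonormal and conditioning on $\mathcal{P}_{I_1}$, I have $X_{I_1^c}^{\top}\mathcal{P}_{I_1}X_{I_1^c} = (U^{\top}X_{I_1^c})^{\top}(U^{\top}X_{I_1^c})$, whose expectation is $(|I_1|/n)I$. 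Applying the restricted-eigenvalue bound (Lemma \ref{lem:restricted_e}) to the $|I_1| \times |I_1^c|$ sub-Gaussian matrix $U^{\top}X_{I_1^c}$ controls its operator norm by $O(\sqrt{k/n})$, giving $\lambda_{\max}(X_{I_1^c}^{\top}\mathcal{P}_{I_1}X_{I_1^c}) \le |I_1|/n + O(\sqrt{k/n}) \le 1/4$. Subtracting yields the required $1/2$.

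For the second bound, decompose $W_{I_1^c}^{\top}(I-\mathcal{P}_{I_1})X_{I_1^c} = W_{I_1^c}^{\top}X_{I_1^c} - (\mathcal{P}_{I_1}W_{I_1^c})^{\top}(\mathcal{P}_{I_1}X_{I_1^c})$. Since $W \perp X$ gives $\mathbb{E}[W_{I_1^c}^{\top}X_{I_1^c}] = 0$, Lemma \ref{lem:restricted_e} applied to the pair $(W_{I_1^c}, X_{I_1^c})$ with $\lambda$ a small absolute constant gives $\|W_{I_1^c}^{\top}X_{I_1^c}\|_{op} \le 1/16$ whenever $n \gtrsim \sigma_w^2 k\log p$ (here the $\sigma_x\sigma_y = \sigma_w$ factor in the lemma pins down the $\sigma_w^2$ in the sample complexity). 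For the cross term, condition on $\mathcal{P}_{I_1}=UU^{\top}$ and use $\|(\mathcal{P}_{I_1}W_{I_1^c})^{\top}(\mathcal{P}_{I_1}X_{I_1^c})\|_{op} \le \|U^{\top}W_{I_1^c}\|_{op}\|U^{\top}X_{I_1^c}\|_{op}$; the same restricted-eigenvalue bound applied to $U^{\top}W_{I_1^c}$ and $U^{\top}X_{I_1^c}$ (both living in an at-most-$k \times k$ ambient space) gives the factors $\sigma_w\sqrt{k/n}+|I_1|\sigma_w^2/n$ and $\sqrt{k/n}+|I_1|/n$ respectively; their product is $\le 1/16$ under the assumed $n \gtrsim (1+\sigma_w^2)^2 k \log p$. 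Summing the two pieces gives the required $1/8$.

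The main obstacle is carefully legitimising the conditioning step---one must check that even though $I_1$ itself is a random subset chosen by the algorithm, stating the bound uniformly over all $I_1 \subseteq I^*$ sidesteps that randomness, and then the columnwise independence within the Independent sub-Gaussian Design model is precisely what makes $X_{I_1^c}$ and $W_{I_1^c}$ independent of the frozen projection. The $(1+\sigma_w^2)^2$ sample-complexity factor is forced by the cross term: the bilinear $W^{\top}X$-type quantity contributes $\sigma_w^2$ through the pairwise variance, while the quadratic $\mathcal{P}_{I_1}W$-type factor contributes another $\sigma_w^2$, so the looser of the two demands $n \gtrsim (1+\sigma_w^2)^2 k \log p$.
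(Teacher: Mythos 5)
Your overall architecture---splitting off the projection term, exploiting that the columns indexed by $I_1^c$ are independent of $Z_{I_1}$ (hence of $\mathcal{P}_{I_1}$), and union-bounding over the $2^k$ subsets $I_1\subseteq I^*$---is exactly the paper's. But there is a genuine gap in how you control the projection terms. You factor $\mathcal{P}_{I_1}=UU^{\top}$, condition on $U$, and then apply Lemma \ref{lem:restricted_e} to the matrices $U^{\top}X_{I_1^c}$ and $U^{\top}W_{I_1^c}$. That lemma is stated for matrices with $n$ \emph{independently sampled} sub-Gaussian rows and the $\frac{1}{n}$ normalization tied to that row count. The projected matrix $U^{\top}X_{I_1^c}$ has only $|I_1|$ rows, and those rows are $u_j^{\top}X_{I_1^c}$ for the orthonormal columns $u_j$ of $U$: they are correlated linear combinations of the same underlying rows of $X_{I_1^c}$, so they are not independent for general sub-Gaussian entries (only uncorrelated). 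Nor can you take $U$ itself as the ``$Y$'' in the lemma, since after conditioning $U$ is a deterministic orthonormal matrix, not a zero-mean sub-Gaussian random matrix. So the step ``$\|U^{\top}X_{I_1^c}\|_{op}\lesssim\sqrt{k/n}$ by Lemma \ref{lem:restricted_e}'' (and likewise for $U^{\top}W_{I_1^c}$) is not justified as written, and this is precisely the step that produces your bounds on $X_{I_1^c}^{\top}\mathcal{P}_{I_1}X_{I_1^c}$ and on the cross term.

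The paper sidesteps this by never diagonalizing the projection: it keeps $\mathcal{P}_{I_1}=Z_{I_1}(Z_{I_1}^{\top}Z_{I_1})^{-1}Z_{I_1}^{\top}$ explicit and bounds
\[
\left\Vert X_{I_1^c}^{\top}\mathcal{P}_{I_1}X_{I_1^c}\right\Vert_{op}\le\frac{\sigma_1^2\left(X_{I_1^c}^{\top}Z_{I_1}\right)}{\sigma_{\min}\left(Z_{I_1}^{\top}Z_{I_1}\right)},
\]
so that every random object is of the form $A^{\top}B$ for two $n$-row independent sub-Gaussian matrices, to which Lemma \ref{lem:restricted_e} applies verbatim ($\mathbb{E}[X_{I_1^c}^{\top}Z_{I_1}]=0$ by column independence, $\sigma_{\min}(Z_{I_1}^{\top}Z_{I_1})\gtrsim 1+\sigma_w^2$); the same trick handles $W_{I_1^c}^{\top}\mathcal{P}_{I_1}X_{I_1^c}$. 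Your route could be repaired---for a fixed unit $u_1\in\mathbb{R}^{|I_1|}$, $Uu_1$ is a fixed unit vector in $\mathbb{R}^n$, so Corollary \ref{cor:X_v_2norm} bounds $\|X_{I_1^c}^{\top}(Uu_1)\|_2$, and an $\epsilon$-net over $u_1$ then controls $\|U^{\top}X_{I_1^c}\|_{op}$---but that is a different argument from the one you invoked. Your heuristic for where the $(1+\sigma_w^2)^2$ sample complexity comes from is essentially right, and your handling of the non-projected pieces ($X_{I_1^c}^{\top}X_{I_1^c}$ and $W_{I_1^c}^{\top}X_{I_1^c}$) is correct.
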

\begin{proof}
By Lemma \ref{lem:restricted_e} and a union bound, we have w.h.p.
$\forall I_{1}\subseteq I^{*}$, $\lambda_{\min}\left(X_{I_{1}^{c}}^{\top}X_{I_{1}^{c}}\right)\ge\frac{1}{2}$.
On the other hand, fixing $I_{1}\subseteq I^{*}$, we have
\begin{eqnarray*}
\left\Vert X_{I_{1}^{c}}^{\top}\mathcal{P}_{I_{1}}X_{I_{1}^{c}}\right\Vert _{op} & = & \left\Vert X_{I_{1}^{c}}^{\top}Z_{I_{1}}\left(Z_{I_{1}}^{\top}Z_{I_{1}}\right)^{-1}Z_{I_{1}}^{\top}X_{I_{1}^{c}}\right\Vert _{op}\\
 & \le & \sigma_{1}^{2}\left(X_{I_{1}^{c}}^{\top}Z_{I_{1}}\right)/\sigma_{\min}\left(Z_{I_{1}}^{\top}Z_{I_{1}}\right).
\end{eqnarray*}
Again by Lemma \ref{lem:restricted_e}, $\sigma_{\min}\left(Z_{I_{1}}^{\top}Z_{I_{1}}\right)\ge\frac{1}{2}(1+\sigma_{w}^{2})$
with probability at least $1-\exp\left(cn\frac{1}{(1+\sigma_{w}^{2})^{2}}+12k\right),$
and $\sigma_{1}^{2}\left(X_{I_{1}^{c}}^{\top}Z_{I_{1}}\right)\le\frac{1}{8}$
with probability at least $1-\exp\left(cn\frac{1}{(1+\sigma_{w}^{2})}+12k\right)$.
So a union bound over all $I_{1}$ yields w.h.p. $\forall I_{1}\subseteq I^{*}$,
$\left\Vert X_{I_{1}^{c}}^{\top}\mathcal{P}_{I_{1}}X_{I_{1}^{c}}\right\Vert _{op}\le\frac{1}{4}$.
It follows that 
$$
\lambda_{\min}\left(X_{I_{1}^{c}}^{\top}(I-\mathcal{P}_{I_{1}})X_{I_{1}^{c}}\right)\ge\lambda_{\min}\left(X_{I_{1}^{c}}^{\top}X_{I_{1}^{c}}\right)-\left\Vert X_{I_{1}^{c}}^{\top}\mathcal{P}_{I_{1}}X_{I_{1}^{c}}\right\Vert _{op}\ge\frac{1}{4}.
$$

Similarly, by Lemma \ref{lem:restricted_e} and the union bound, we have
w.h.p. $\forall I_{1}\subseteq I^{*}$, $\left\Vert W_{I_{1}^{c}}^{\top}X_{I_{1}^{c}}\right\Vert _{op}\le\frac{1}{16}$
and $\left\Vert W_{I_{1}^{c}}^{\top}\mathcal{P}_{I_{1}}X_{I_{1}^{c}}\right\Vert _{op}\le\frac{1}{16}$,
hence $\lambda_{\max}\left(W_{I_{1}^{c}}^{\top}(I-\mathcal{P}_{I_{1}})X_{I_{1}^{c}}\right)\le\left\Vert W_{I_{1}^{c}}^{\top}X_{I_{1}^{c}}\right\Vert _{op}+\left\Vert W_{I_{1}^{c}}^{\top}\mathcal{P}_{I_{1}}X_{I_{1}^{c}}\right\Vert _{op}\le\frac{1}{8}.$
\end{proof}
Therefore, the first term in (\ref{eq:h_good}) is lower bounded by $\frac{1}{4}\left\Vert \beta_{I_{r}}^{*}\right\Vert _{2}$,
and the second term is upper bounded by $\frac{1}{8}\left\Vert \beta_{I_{r}}^{*}\right\Vert _{2}$.

Now consider the third term in \ref{eq:h_good}. By Lemma \ref{lem:restricted_e}
and a union bound, we have w.h.p. $\sigma_{1}\left(W_{I_{1}}\right)\le\frac{3}{2}\sigma_{w}$
for all $I_{1}$. Lemma \ref{lem:Sigma_beta} gives $\left\Vert \mathbf{e} \right\Vert _{2} \le \frac{3}{2}\sigma_{e}$.
It follows that $\left\Vert (I-\mathcal{P}_{I_{1}})(W_{I_{1}}\beta_{I_{1}}-\mathbf{e})\right\Vert _{2}\le\sigma_{1}(I-P_{Z_{I_{1}}})\left(\sigma_{1}\left(W_{I_{1}}\right)\left\Vert \beta_{I_{1}}^{*}\right\Vert _{2}+\left\Vert \mathbf{e} \right\Vert _{2}\right)\le\frac{3}{2}\left(\sigma_{w}\left\Vert \beta_{I_{1}}^{*}\right\Vert _{2}+\sigma_{e}\right)$.
Set $v_{I_{1}}=(I-\mathcal{P}_{I})(W_{I}\beta_{I}-\mathbf{e})$. Because $Z_{I_{1}^{c}}$
and $v_{I_{1}}$ are independent, Corollary \ref{cor:X_v_2norm} gives
$\left\Vert Z_{I_{1}^{c}}^{\top}v_{I_{1}}\right\Vert _{2}\le\sqrt{\frac{(1+\epsilon)(k-i)(1+\sigma_{w}^{2})}{n}}\left\Vert v_{I_{1}}\right\Vert _{2}$
with probability at least $1-3\exp\left(ck\epsilon^{2}\right)$. Using
a union bound over all $I_{1}$, we conclude that the third term is
bounded w.h.p. by $4\sqrt{\frac{(1+\sigma_{w}^{2})(k-i)\log p}{n}}\left(\sigma_{w}\left\Vert \beta_{I}^{*}\right\Vert _{2}+\sigma_{e}\right).$

Combining the above bounds, we have 
$$
\left\Vert h_{I_{r}}\right\Vert _{\infty}\ge\frac{1}{\sqrt{k-i}}\left[\frac{1}{4}\left\Vert \beta_{I_{r}}^{*}\right\Vert _{2}-\frac{1}{8}\left\Vert \beta_{I_{r}}^{*}\right\Vert _{2}-4\sqrt{\frac{(1+\sigma_{w}^{2})(k-i)\log p}{n}}\left(\sigma_{w}\left\Vert \beta_{I}^{*}\right\Vert _{2}+\sigma_{e}\right)\right],
$$
which is greater than $\frac{1}{8\sqrt{k-i}}\left\Vert \beta_{I_{r}}^{*}\right\Vert _{2}$
if all the non-zero entries of $\beta^{*}$ are greater than $16\left(\sigma_{w}\left\Vert \beta^{*}\right\Vert _{2}+\sigma_{e}\right)\sqrt{\frac{(1+\sigma_{w}^{2})\log p}{n}}.$

On the other hand, by similar argument as above we have $\left\Vert (I-\mathcal{P}_{I})(Z_{I_{r}}\beta_{I_{r}}-W_{I^{*}}\beta_{I^{*}}+\mathbf{e})\right\Vert _{2}\le\frac{3}{2}\left(\left\Vert \beta_{I_{r}}^{*}\right\Vert _{2}+\sigma_{w}\left\Vert \beta_{I}^{*}\right\Vert _{2}+\sigma_{e}\right)$.
Note that for each $i\in I^{*c}$, $Z_{i}$ is independent of $Z_{I}$,
$X_{I^{*}}$ and $e$. Applying Corollary \ref{cor:X_v_2norm} gives
w.h.p.
\begin{eqnarray*}
\left|h_{i}\right| & = & \left|Z_{i}^{\top}(I-\mathcal{P}_{I})(X_{I^{*}}\beta_{I^{*}}+\mathbf{e})\right|\\
 & = & \left|Z_{i}^{\top}(I-\mathcal{P}_{I})(Z_{I_{r}}\beta_{I_{r}}-W_{I^{*}}\beta_{I^{*}}+\mathbf{e})\right|\\
 & \le & 4\sqrt{\frac{(1+\sigma_{w}^{2})\log p}{n}}\left(\left\Vert \beta_{I_{r}}^{*}\right\Vert _{2}+\sigma_{w}\left\Vert \beta_{I}^{*}\right\Vert _{2}+\sigma_{e}\right),
\end{eqnarray*}
which is smaller than $\frac{1}{8\sqrt{k-i}}\left\Vert \beta_{I_{r}}^{*}\right\Vert _{2}$
provided $n\gtrsim(1+\sigma_{w}^{2})^{2}k\log p$, and the nonzeros
of $\beta^{*}$ are greater than $4\left(\sigma_{w}\left\Vert \beta^{*}\right\Vert _{2}+\sigma_{e}\right)\sqrt{\frac{(1+\sigma_{w}^{2})\log p}{n}}$.
Using a union bound shows this holds for all $i\in I^{*c}$.

We conclude that $\left\Vert h_{I_{r}}\right\Vert _{\infty}>\left|h_{i}\right|$
for all $i\in I^{*c}$ w.h.p. This completes the proof.

\subsection{Proof of Theorem \ref{thm:omp_mis_support}}
\begin{proof}
Note that the entries of  $Z$ are i.i.d. sub-Gaussian random variables with parameter $\sqrt{\frac{1}{n}}$.
Similarly to the proof of Theorem \ref{thm:omp_add_support}, we use
induction, the decoupling argument, and the same notation. Therefore,
it suffices to show $\left\Vert h_{I_{r}}\right\Vert _{\infty}\ge\left|h_{i}\right|$
for all $i\in(I^{*})^{c}$.

We have
\begin{eqnarray*}
\left\Vert h_{I_{r}}\right\Vert _{\infty} & = & \left\Vert Z_{I_{r}}^{\top}(I-\mathcal{P}_{I})(X_{I^{*}}\beta_{I^{*}}+\mathbf{e})\right\Vert _{\infty}\\
 & \ge & \frac{1}{\sqrt{k-i}}\left\Vert Z_{I_{r}}^{\top}(I-\mathcal{P}_{I})\left(X_{I_{r}}\beta_{I_{r}}^{*}+(X_{I}-Z_{I})\beta_{I}^{*}+\mathbf{e}\right)\right\Vert _{2}\\
 & \ge & \frac{1}{\sqrt{k-i}}\left(\left\Vert Z_{I_{r}}^{\top}(I-\mathcal{P}_{I})X_{I_{r}}\beta_{I_{r}}^{*}\right\Vert _{2}+\left\Vert Z_{I_{r}}^{\top}(I-\mathcal{P}_{I})(X_{I}-Z_{I})\beta_{I}^{*}\right\Vert _{2}-\left\Vert Z_{I_{r}}^{\top}(I-\mathcal{P}_{I})\mathbf{e}\right\Vert _{2}\right)
\end{eqnarray*}
Consider the first term. We have $\lambda_{\min}(Z_{I_{r}}^{\top}X_{I_{r}})\ge\frac{1}{2}(1-\rho)$
by Lemma \ref{lem:restricted_e}. We also have $\lambda_{\min}(Z_{I}^{\top}Z_{I})\ge\frac{1}{2}(1-\rho)$,
$\sigma_{1}(Z_{I_{r}}^{\top}Z_{I})\le\frac{1}{8}(1-\rho)^{2}$, $\sigma_{1}(Z_{I}^{\top}X_{I_{r}})\le\frac{1}{8}(1-\rho)^{2}$
by the same lemma. It follows that $\lambda_{1}(Z_{I_{r}}^{\top}\mathcal{P}_{I}X_{I_{r}})=\lambda_{1}(Z_{I_{r}}^{\top}Z_{I}(Z_{I}^{\top}Z_{I})^{-1}Z_{I}^{\top}X_{I_{r}})\le\sigma_{1}^{2}(Z_{I}^{\top}Z_{I_{r}})/\lambda_{\min}(Z_{I}^{\top}Z_{I})\le\frac{1}{4}(1-\rho)^{3}$.
We conclude that $\lambda_{\min}(Z_{I_{r}}^{\top}(I-\mathcal{P}_{I})Z_{I_{r}})\ge\lambda_{\min}(Z_{I}^{\top}Z_{I})-\lambda_{1}(Z_{I_{r}}^{\top}\mathcal{P}_{I}Z_{I_{r}})\ge\frac{1}{4}(1-\rho)$.
So the first term is at least $\frac{1-\rho}{4\sqrt{k-i}}\left\Vert \beta_{I_{r}}^{*}\right\Vert _{2}$.

For the second term, we apply Lemma \ref{lem:restricted_e} to obtain that
w.h.p., $\sigma_{1}(X_{I}-Z_{I}) \le 2$. It follows that
\begin{eqnarray*}
\left\Vert (I-\mathcal{P}_{I})(X_{I}-Z_{I})\beta_{I}\right\Vert _{2} & \le & 2\left\Vert \beta_{I}\right\Vert _{2}.
\end{eqnarray*}
By Corollary \ref{cor:X_v_2norm} and a union bound, we obtain
\[
\left\Vert Z_{I_{r}}^{\top}(I-\mathcal{P}_{I})(X_{I}-Z_{I})\beta_{I}^{*}\right\Vert _{2}\le2\left\Vert \beta_{I}\right\Vert _{2}\sqrt{\frac{(k-i)\log p}{n}},
\]
which is smaller than $\frac{1-\rho}{8}(1-\rho)\left\Vert \beta_{I_{r}}^{*}\right\Vert _{2}$
if the non-zeros are bigger than $\frac{16}{1-\rho}\left\Vert \beta_{I}\right\Vert _{2}\sqrt{\frac{\log p}{n}}.$

Consider the third term. In the proof of Theorem \ref{thm:omp_add_support}
we have shown that $\left\Vert \mathbf{e} \right\Vert _{2}\le\sigma_{e}$, so
$\left\Vert (I-\mathcal{P}_{I})\mathbf{e} \right\Vert _{2}\le\sigma_{e}$. w.h.p.
By Corollary \ref{cor:X_v_2norm} and a union bound, it follows that
$\left\Vert Z_{I_{r}}^{\top}(I-\mathcal{P}_{I}) \mathbf{e} \right\Vert _{2}\le\sqrt{\frac{(k-i)\log p}{n}}\sigma_{e}$,
which is smaller than $\frac{1-\rho}{16}\left\Vert \beta_{I_{r}}^{*}\right\Vert _{2}$
if non-zeros are bigger than $\frac{16}{1-\rho}\sigma_{e}\sqrt{\frac{\log p}{n}}.$

Combining the above bounds, we conclude that $\left\Vert h_{I_{r}}\right\Vert _{\infty}\ge\frac{1-\rho}{8\sqrt{k-i}}$$\left\Vert \beta_{I_{r}}\right\Vert _{2}$
if all the non-zero entries of $\beta^{*}$ is greater than $\frac{16}{1-\rho}(\left\Vert \beta^{*}\right\Vert _{2}+\sigma_{e})\sqrt{\frac{\log p}{n}}$.

We now consider $\left|h_{i}\right|$ for $i\in(I^{*})^{c}$. We have
\begin{eqnarray*}
\left\Vert (I-\mathcal{P}_{I})(X_{I^{*}}\beta_{I^{*}}+\mathbf{e})\right\Vert _{2} & \le & \left\Vert X_{I^{*}}\beta_{I^{*}}+\mathbf{e}\right\Vert _{2}\\
 & \le & \frac{3}{2}\left\Vert \beta^{*}\right\Vert _{2}+\sigma_{e}.
\end{eqnarray*}
So by independence of $Z_{i}$ and $X_{I^{*}}$ and Corollary \ref{cor:X_v_2norm},
we obtain
\begin{eqnarray*}
\left|h_{i}\right| & = & \left|Z_{i}^{\top}(I-\mathcal{P}_{I})(X_{I^{*}}\beta_{I^{*}}+ \mathbf{e})\right|\\
 & \le & \sqrt{\frac{\log p}{n}}(\frac{3}{2}\left\Vert \beta^{*}\right\Vert _{2}+\sigma_{e}),
\end{eqnarray*}
which is smaller than $\frac{1-\rho}{8\sqrt{k-i}}\left\Vert \beta_{I_{r}}^{*}\right\Vert _{2}$
if all the non-zeros of $\beta^{*}$ are bigger than $\frac{16}{1-\rho}(\left\Vert \beta^{*}\right\Vert _{2}+\sigma_{e})\sqrt{\frac{\log p}{n}}$.
\end{proof} 

\section{Numerical Simulations}
\label{sec:simulations}
In this section we provide numerical simulations that corroborate the theoretical results presented above, as well as shed further light on the performance of mod-OMP for noisy and missing data. Our results illustrate, in particular, several key points. First, in both the low-dimensional and high-dimensional settings, empirical results demonstrate that the scaling promised in the corollaries to Theorem \ref{prop:lowD_general} and Theorem \ref{thm:omp_add_support} is correct. We demonstrate this by rescaling the error of our experiments, normalizing by the predicted contribution to the error of $n$, $k$ and $p$, in order to highlight the dependence on $\sigma_w$. Our experiments show a clear alignment of the actual results along the predicted results. The results of this section also show the different regimes of efficacy of our different estimators for the noisy-covariate setting. Finally, we also compare to \cite{loh2011nonconvex}, and demonstrate that in addition to faster running time, we seem to obtain better empirical results in terms of recovery errors.

We present the low-dimensional results first, and then the high-dimensional results.

\subsection{The Low-Dimensional Case}
We report some simulation results on our low-dimensional results from Section \ref{sec:low}. These results are also relevant to the high-dimension setting, as our OMP algorithm reduces a high-dimensional problem to a low-dimensional one once it identifies the correct support. Note that each of our bounds in Corollary \ref{cor:LD_add_1} to Corollary \ref{cor:LD_mis_1} scales with $\frac{\log p}{n}$, which is to be expected. Therefore, we focus on verifying the scaling with the other parameters such as $k, \sigma_w, \rho$ and $\|\beta^*\|$.

We first look at the case with additive noise. We fix $ n = 3200 $, $ \sigma_e=0 $, $\Sigma_x = I$ and $ \Sigma_w = \sigma^2_w I $, and sample all matrices from a Gaussian distribution. $k$ and $\sigma_w$ take values in ${2,3,\ldots,7} $ and $[0,2]$, respectively. For each $k$, we generate the true $ \beta^* $ as a random $\pm 1$ vector; note that $\|\beta^*\|=\sqrt{k}$, which also scales with $ k $. Figure \ref{fig:add_low_1} (a) shows the $\ell_2$ recovery error under different $k$ and $\sigma_w$ using the estimator built from knowledge of $\Sigma_w$, where one can see the quadratic dependence on $\sigma_w$. Corollary \ref{cor:LD_add_1} predicts that, with fixed $n$, the error scales proportional to $(\sigma_w + \sigma_w^2) \|\beta^*\| \sqrt{k\log p} = (\sigma_w + \sigma_w^2) k \sqrt{\log p}$; in particular, if we plot the error versus the control parameter $ (\sigma_w + \sigma_w^2) k $, all curves should roughly become straight lines through the origin and  align with each other. Indeed, this is precisely what we see; the results, representing results averaged over 100 trials, are plotted in Figure \ref{fig:add_low_1} (b).

Similarly, we performed simulations for the estimators built from knowledge of $\Sigma_x$ and from Instrumental Variables. In the latter case, the Instrumental Variable is generated by $U=X\Gamma + E$, where $ \Gamma\in\mathbb{R}^{k\times m}$ with $ m=2k $ and the entries of $\Gamma $ and $E$ being i.i.d. standard Gaussian variables; in this case we have $ \sigma_1(\Sigma_{UX}) \approx \sigma_k(\Sigma_{UX}) = \Theta(\sqrt{m})$ and $\sigma_u=\sqrt{k}$. Corollaries \ref{cor:LD_add_2} and \ref{cor:LD_add_3} predict that the $ \ell_2 $ errors are proportional to the control parameters $ (1+\sigma_w)k $ and $ \sigma_w k $.  respectively. These predictions again match well our simulation results shown in Figure \ref{fig:add_low_23} (a) and (b).

In addition, we compare the performance of the estimators built from $\Sigma_w$ and $\Sigma_x$. Figure \ref{fig:add_low_1v2} shows their recovery error under different $\sigma_w$ with $k=7$. The results match the theory, and in particular show that the scaling depends as predicted on $\sigma_w$: The $\Sigma_w$-estimator performs better for small $\sigma_w$, and in particular, delivers exact recovery when $\sigma_w = 0$; the $\Sigma_x$-estimator is more favorable for large $\sigma_w$ due to its linear dependence on $\sigma_w$ (versus quadratic), but the error does not go to zero when $\sigma_w \rightarrow 0$. The crossover occurs roughly at $\sigma_w=1$.

\begin{figure}
\centering{
\begin{tabular}{cc}
\includegraphics[scale=.7]{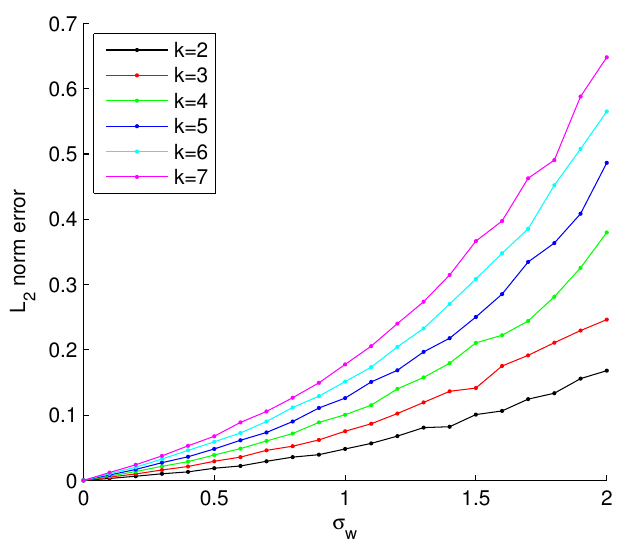} &
\includegraphics[scale=.7]{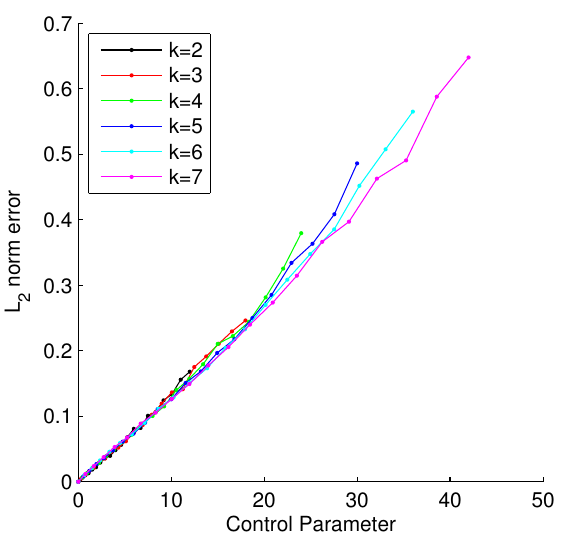} \\
(a) & (b)
\end{tabular}
\caption{\label{fig:add_low_1} $ \ell_2 $ recovery error of the $\Sigma_w$-estimator for additive noise versus (a) the noise magnitude $\sigma_w$ and (b) the control parameter $ (\sigma_w + \sigma_w^2) k $. As predicted by Corollary \ref{cor:LD_add_1}, all curves in (b) are roughly straight lines and align. Each point is an average over 200 trials.}
}
\end{figure}

\begin{figure}
\centering{
\begin{tabular}{cc}
\includegraphics[scale=.7]{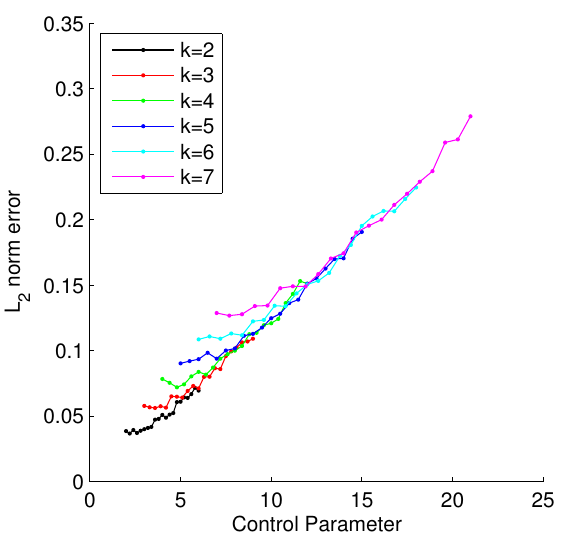} &
\includegraphics[scale=.7]{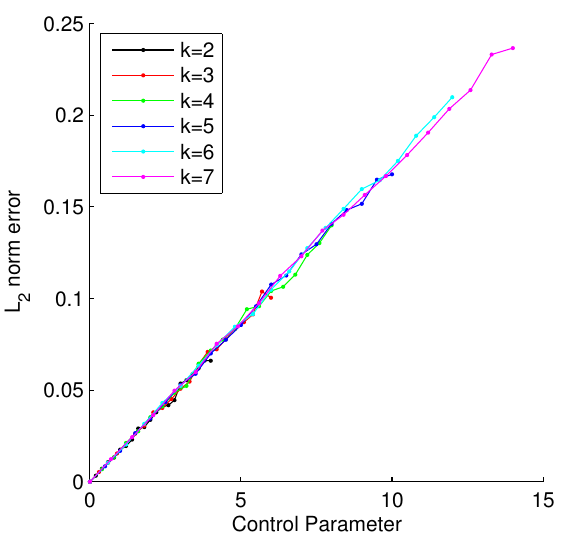} \\
(a) & (b)
\end{tabular}
\caption{\label{fig:add_low_23} (a) $ \ell_2 $ recovery error of the $\Sigma_x$-estimator for additive noise versus the control parameter $ (1+\sigma_w) k $. (b) $ \ell_2 $ recovery error of the IV-estimator versus the control parameter $ \sigma_w k $. As predicted by Corollary \ref{cor:LD_add_2} and \ref{cor:LD_add_3}, all curves are roughly straight lines and align. Each point is an average over 100 trials.}
}
\end{figure}

\begin{figure}
\centering{
\begin{tabular}{c}
\includegraphics[scale=.7]{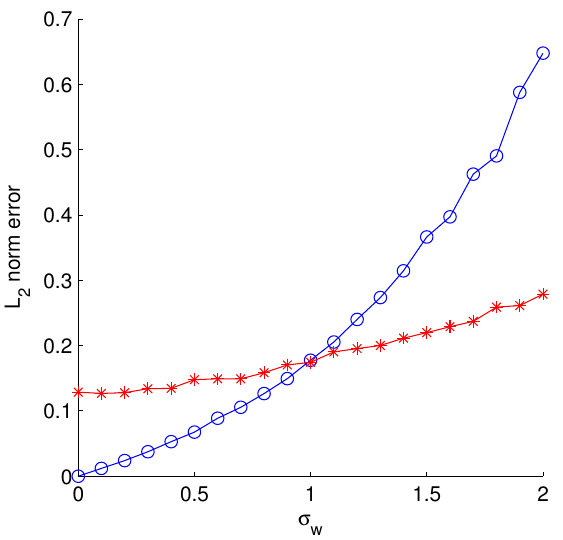}
\end{tabular}
\caption{\label{fig:add_low_1v2} Comparison between recovery errors of the $\Sigma_w$- and $\Sigma_x$-estimators for additive noise. Each point is an average over 100 trials.}
}
\end{figure}

Finally, we turn to the case with missing data. We perform simulations with parameters $n=2000$, $k\in\{2,\ldots,7\}$,  $ \rho\in[0,0.8]$, and $\beta^*$ generated in the same way as above (so that $\|\beta^*\|=k$). With $n$ fixed, Corollary \ref{cor:LD_mis_1} guarantees that the recovery error is bounded by $ O(\frac{k}{(1-\rho)^2}) $. The simulation results in Figure \ref{fig:mis_low_1} seem to \emph{outperform} this bound, as  the error goes to zero when $\rho \rightarrow 0$. If we plot the error versus the control parameter $ k \frac{\sqrt{\rho}}{1-\rho} $, then the curves become roughly straight lines and align. It would be interesting in the future to tighten our bound to match this scaling.

\begin{figure}
\hspace{-0.8cm}
\begin{tabular}{cc}
\includegraphics[scale=.65]{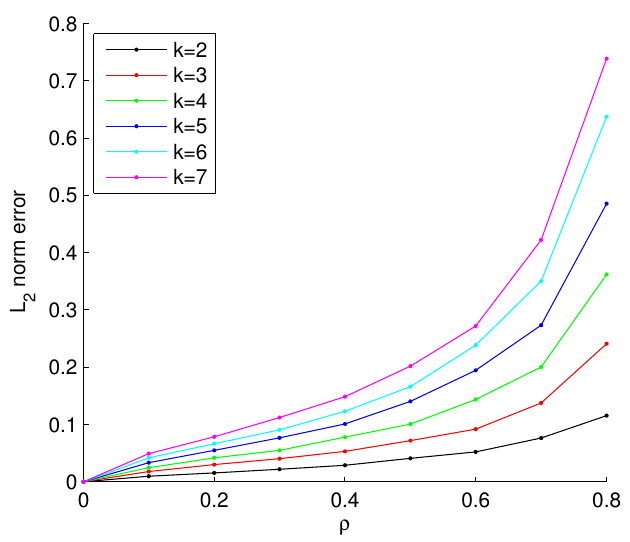} &
\includegraphics[scale=.65]{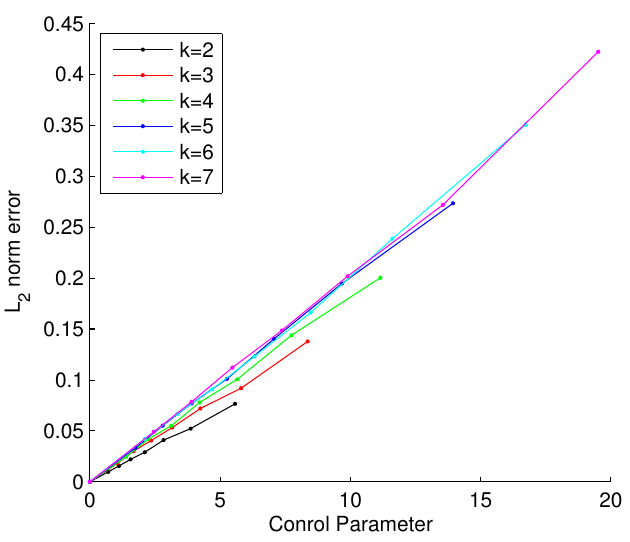} \\
(a) & (b)
\end{tabular}
\caption{\label{fig:mis_low_1} $ \ell_2 $ recovery error for missing noise versus (a) the erasure probability $\rho$ and (b) the control parameter $ \rho\sqrt{k} $. Each point is an average over 200 trials.}
\end{figure}

\subsection{The High-Dimensional Case}

In this subsection, we study the performance of our mod-OMP algorithm for the high-dimensional setting, and compare with the projected gradient method in \cite{loh2011nonconvex}. We first consider the additive noise case and use the following settings: $p=450, n=400,\sigma_{e}=0,\Sigma_{x}=I$, $ \Sigma_w = \sigma_w^2 I $, $ k\in\{2,\ldots,7\} $, and $\sigma_{w} \in [0,1]$. We compare mod-OMP using the $\Sigma_w$-estimator and the projected gradient method using the corresponding $\hat{\Sigma}$ and $\hat{\gamma}$. Figure \ref{fig:add_high} (a) plots the $\ell_{2}$ errors. One observes that OMP outperforms the projected gradient method in all cases. 

\begin{figure}
\hspace{-1cm}
\begin{tabular}{ccc}
\includegraphics[scale=.45]{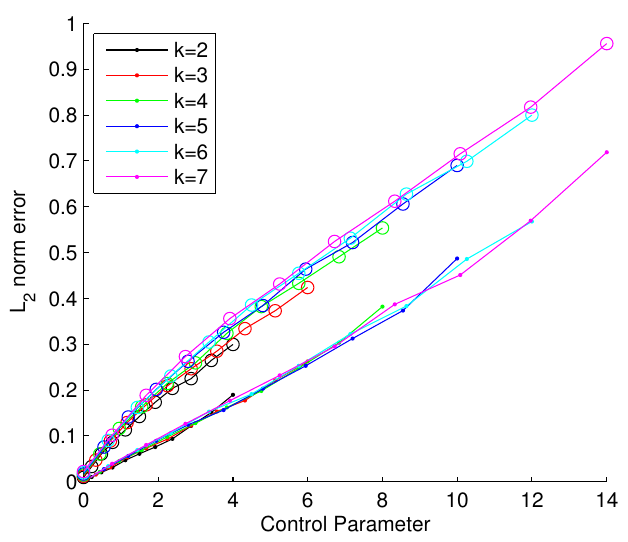} &
\includegraphics[scale=.45]{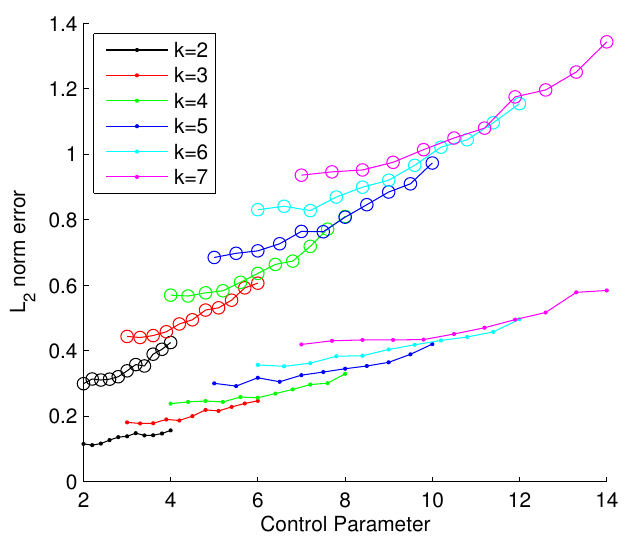} &
\includegraphics[scale=.45]{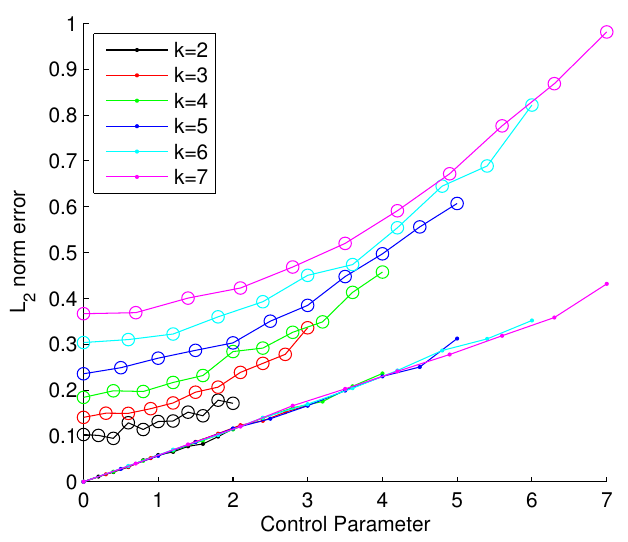} \\
(a) & (b) & (c)
\end{tabular}
\caption{\label{fig:add_high} Comparison of the $ \ell_2 $ recovery error of mod-OMP and the projected gradient method under knowledge of (a) $\Sigma_w$, (b) $\Sigma_x$, and (c) an Instrumental Variable. The error is plotted against the control parameter (a) $ (\sigma_w + \sigma_w^2) k $, (b) $ (1+\sigma_w) k $, and (c) $ \sigma_w  k $. Circles correspond to the projected gradient method and dots to mod-OMP. As claimed, mod-OMP performs better in all cases considered. Each point is an average over 200 trials.}

\end{figure}

We want to point out that mod-OMP enjoys more favorable running time in our experiments, although we do not perform a formal comparison since this depends on the particular implementation of both methods. As is clear from the description of the algorithm, mod-OMP has exactly the same running time as standard OMP. 

We also consider mod-OMP with the $\Sigma_x$- and IV-based estimators. Although not discussed in \cite{loh2011nonconvex}, it is natural to consider the corresponding variants of the projected gradient method which use the $\hat{\Sigma}$ and $\hat{\gamma}$ from knowledge of $\Sigma_x$ or IVs (c.f. (13) in \cite{loh2011nonconvex}). We plot the recovery errors for our two estimators in Figure \ref{fig:add_high} (b) and (c), and again observe better performance of mod-OMP than the projected gradient method.

We further consider robustness of the projected gradient method to over- or under-estimating $\sigma_w$, for support recovery. For very low noise, the performance is unaffected; however, it quickly deteriorates as the noise level grows. The two graphs in Figure \ref{fig:unknown-Sigma} show this deterioration; in contrast, our estimator has excellent support recovery throughout this range.

\begin{figure}
\hspace{-0.8cm}
\begin{tabular}{cc}
\includegraphics[scale=.55]{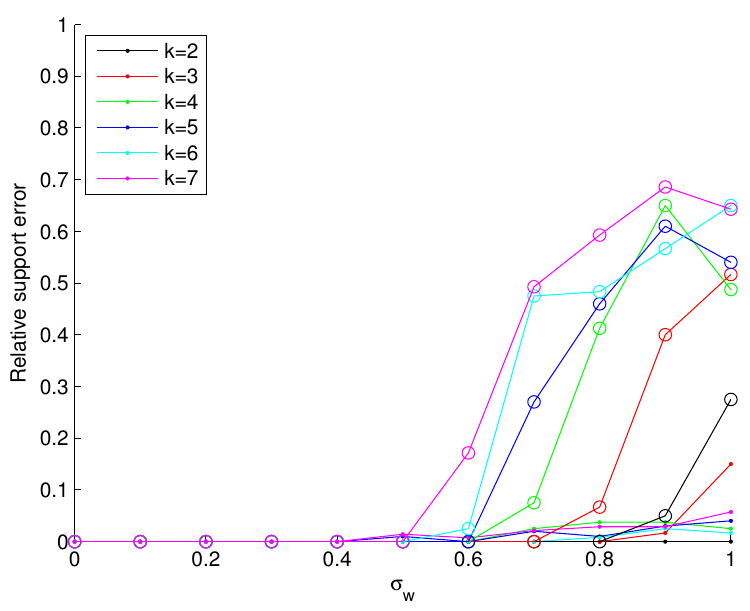} &
\includegraphics[scale=.55]{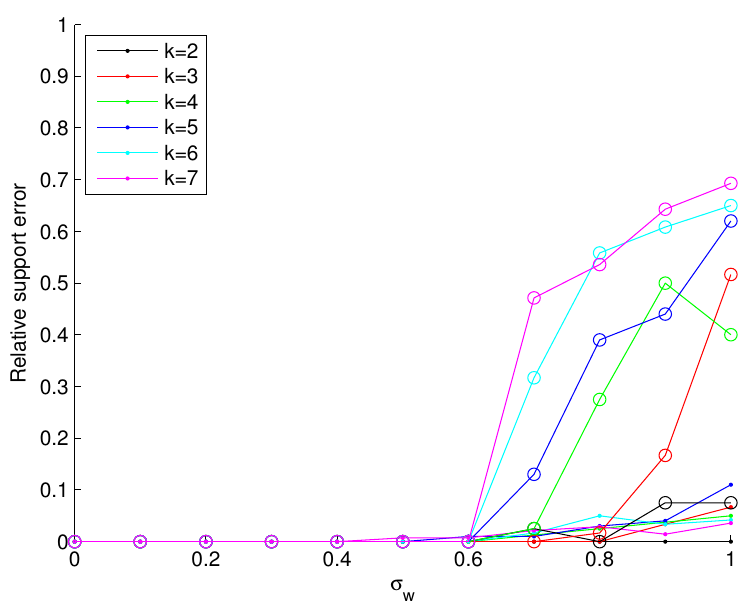} \\
(a) & (b)
\end{tabular}
\caption{\label{fig:unknown-Sigma} Support recovery error for the projected gradient algorithm when $\Sigma_w$ is not precisely known. Figure (a) shows the results when we use $\sigma_w = \sigma_w^{\mbox{{\tiny true}}}/2$, and (b) shows the results for  $\sigma_w = \sigma_w^{\mbox{{\tiny true}}} \times 2$. We note that our approach (which does not require knowledge of $\Sigma_w$) has excellent recovery throughout this range. Each point is an average over 20 trials.}

\end{figure}

We next study the case with missing data with the following setting: $p=750, n=500,\sigma_{e}=0,\Sigma_{x}=I$, $ k\in\{2,\ldots,7\}\} $, and $\rho \in [0,0.5]$. The results are displayed in Figure \ref{fig:mis_high} (a), in which mod-OMP shows better performance.

Finally, although we only consider $X$ with independent columns in this paper, we believe that this restriction can be removed. For now, we corroborate this claim via simulation. Figure \ref{fig:mis_high} (b) shows the results under the following choice of covariance matrix of $X$:
\[
\left(\Sigma_{x}\right)_{ij}=\begin{cases}
1 & \quad i=j\\
0.2 & \quad i\neq j.
\end{cases}
\]
Again, mod-OMP dominates the projected gradient method in terms of empirical performance.

\begin{figure}
\centering{
\begin{tabular}{cc}
\includegraphics[scale=.55]{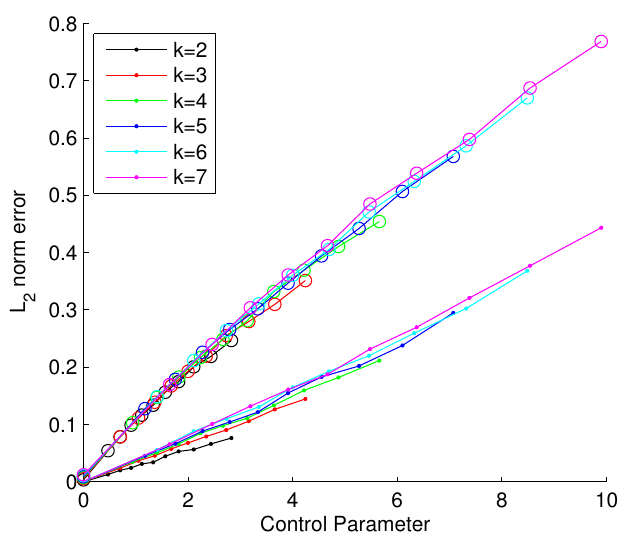} &
\includegraphics[scale=.55]{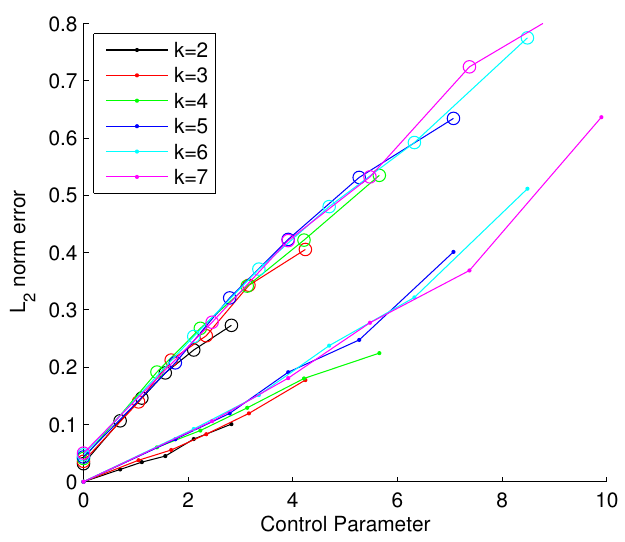} \\
(a) & (b) % & (c)
\end{tabular}
\caption{\label{fig:mis_high} Comparison of the $ \ell_2 $ recovery error of mod-OMP and the projected gradient method for missing data. The error is plotted against the control parameter $ k\frac{\sqrt{\rho}}{(1-\rho)} $. (a) Independent columns of $X$, and (b) Correlated columns. Dots correspond to mod-OMP and circles to the projected gradient method. Our results show that mod-OMP performs better in all cases considered. Each point is an average over 50 trials.}
}
\end{figure}

\appendix
\appendixpage

\section{Proof of Supporting Concentration Results}

In this section, we provide the proofs to the concentration results for sub-Gaussian random variables that we make extensive use of in Section \ref{sec:proofs}. We repeat the statements of the results below for convenience.

{\bf Lemma \ref{lem:Sigma_beta}}. {\it 
Suppose $X\in\mathbb{R}^{n\times k}$, $Y\in\mathbb{R}^{n\times m}$
are zero-mean sub-Gaussian matrices with parameters $(\frac{1}{n}\Sigma_{x},\frac{1}{n}\sigma_{x}^{2})$,
$\left(\frac{1}{n}\Sigma_{y},\frac{1}{n}\sigma_{y}^{2}\right)$. Then
for any fixed vector $\mathbf{v}_1,\mathbf{v}_2$, we have
\[
\mathbb{P}\left(\left|\mathbf{v}_1^{\top}\left(Y^{\top}X-\mathbb{E}\left[Y^{\top}X\right]\right)\mathbf{v}_2\right|\ge t\left\Vert \mathbf{v}_1\right\Vert \left\Vert \mathbf{v}_2\right\Vert \right)\le3\exp\left(-cn\min\left\{ \frac{t^{2}}{\sigma_{x}^{2}\sigma_{y}^{2}},\frac{t}{\sigma_{x}\sigma_{y}}\right\} \right).
\]
In particular, if $n\gtrsim\log p\ge\log m,\log k$, we have w.h.p.
\[
\left|\mathbf{v}_1^{\top}\left(Y^{\top}X-\mathbb{E}\left[Y^{\top}X\right]\right)\mathbf{v}_2\right|\le\sigma_{x}\sigma_{y}\left\Vert \mathbf{v}_1\right\Vert \left\Vert \mathbf{v}_2\right\Vert \sqrt{\frac{\log p}{n}}.
\]
Setting $\mathbf{v}_1$ to be the $i^{th}$ standard basis vector, and using a union bound over $i=1,\ldots,m$,
we have w.h.p. \textup{
\[
\left\Vert \left(Y^{\top}X-\mathbb{E}\left[Y^{\top}X\right]\right)v\right\Vert _{\infty}\le\sigma_{x}\sigma_{y}\left\Vert v\right\Vert \sqrt{\frac{\log p}{n}}.
\]
}}

\begin{proof}
Rescaling as necessary, we assume $\sigma_{x}=\sigma_{y}=1$ and $\left\Vert \mathbf{v}_1\right\Vert =\left\Vert \mathbf{v}_2\right\Vert =1$.
Define $\Phi(x)=\left\Vert x\right\Vert ^{2}-\mathbb{E}\left(\left\Vert x\right\Vert ^{2}\right).$
Then $\left|\mathbf{v}_1^{\top}\left(Y^{\top}X-\mathbb{E}\left[Y^{\top}X\right]\right)\mathbf{v}_2\right|=\frac{1}{2}\left|\Phi(X\mathbf{v}_2+Y\mathbf{v}_1)-\Phi(X\mathbf{v}_2)-\Phi(Y\mathbf{v}_1)\right|$.
Note that $X\mathbf{v}_2+Y\mathbf{v}_1=[X,Y][\mathbf{v}_2^{\top},\mathbf{v}_1^{\top}]^{\top}$,
where $X'=[X,Y]$ is zero-mean sub-Gaussian with parameter $(\frac{1}{n}\mathbb{E}\left[X'^{\top}X'\right],\frac{1}{n})$.
Applying (70) in \cite{loh2011nonconvex} to each of the three terms gives
\[
\left|\mathbf{v}_1^{\top}\left(Y^{\top}X-\mathbb{E}\left[Y^{\top}X\right]\right)\mathbf{v}_2\right|\ge t,
\]
with probability at most $\exp\left(-cn\min\left\{ t^{2},t\right\} \right)$.
\end{proof}

{\bf Corollary \ref{cor:X_v_2norm}}. {\it If $X\in\mathbb{R}^{n\times k}$ is a zero-mean
sub-Gaussian matrix with parameter $(\frac{1}{n}\sigma_{x}^{2}I,\frac{1}{n}\sigma_{x}^{2})$,
and $\mathbf{v}$ is a fixed vector in $\mathbb{R}^{n}$, then for any $\epsilon\ge1$,
we have 
\[
\mathbb{P}\left(\left\Vert X^{\top} \mathbf{v} \right\Vert _{2}>\sqrt{\frac{(1+\epsilon)k}{n}}\sigma_{x}\left\Vert \mathbf{v} \right\Vert _{2}\right)\le3\exp\left(-ck\epsilon\right)
\]
}

\begin{proof}
By assmption, $X^{\top}$ is zero-mean sub-Gaussian with parameter
$(\frac{1}{k}\frac{k}{n}\sigma_{x}^{2}I,\frac{1}{k}\frac{k}{n}\sigma_{x}^{2}$)
Note that have $\left\Vert X^{\top} \mathbf{v} \right\Vert _{2}^{2} \le \left| \mathbf{v}^{\top}(XX^{\top}-\frac{k}{n}\sigma_{x}^{2}I) \mathbf{v} \right|+\frac{k}{n}\sigma_{x}^{2}\left\Vert \mathbf{v} \right\Vert _{2}^{2}$.
Applying the last lemma with $t=\frac{k}{n}\sigma_{x}^{2}\epsilon$,
$\epsilon\ge1$ to the first term, we obtain
\[
\mathbb{P}\left(k\left|\mathbf{v}^{\top}(XX^{\top}-\frac{k}{n}\sigma_{x}^{2}I) \mathbf{v} \right|>\frac{k}{n}\sigma_{x}^{2}\epsilon\left\Vert  \mathbf{v} \right\Vert ^{2}\right)\le3\exp\left(-ck\min\left\{ \epsilon^{2},\epsilon\right\} \right)=3\exp\left(-ck\epsilon\right).
\]
The corollary follows.\end{proof}

{\bf Lemma \ref{lem:restricted_e}}. {\it If $X\in\mathbb{R}^{n\times k}$, $Y\in\mathbb{R}^{n\times m}$
are zero mean sub-Gaussian matrices with parameter $(\frac{1}{n}\Sigma_{x},\frac{1}{n}\sigma_{x}^{2})$,$(\frac{1}{n}\Sigma_{y},\frac{1}{n}\sigma_{y}^{2})$,
then 
\[
\mathbb{P}\left(\sup_{\mathbf{v}_1\in\mathbb{R}^{m},\mathbf{v}_2\in\mathbb{R}^{k},\left\Vert \mathbf{v}_1\right\Vert =\left\Vert \mathbf{v}_2\right\Vert =1}\left|\mathbf{v}_1^{\top}\left(Y^{\top}X-\mathbb{E}\left[Y^{\top}X\right]\right)\mathbf{v}_2\right|\ge t\right)\le2\exp\left(-cn\min(\frac{t^{2}}{\sigma_{x}^{2}\sigma_{y}^{2}},\frac{t}{\sigma_{x}\sigma_{y}})+6(k+m)\right).
\]
In particular, for each $\lambda>0$, if $n\gtrsim\max\left\{ \frac{\sigma_{x}^{2}\sigma_{y}^{2}}{\lambda^{2}},1\right\} (k+m)\log p$,
then w.h.p.
\begin{eqnarray*}
\sup_{\mathbf{v}_1\in\mathbb{R}^{m},\mathbf{v}_2\in\mathbb{R}^{k}}\left|\mathbf{v}_1^{\top}\left(Y^{\top}X-\mathbb{E}\left[Y^{\top}X\right]\right)\mathbf{v}_2\right| & \le & \frac{1}{54}\lambda\left\Vert \mathbf{v}_1\right\Vert \left\Vert \mathbf{v}_2\right\Vert 
\end{eqnarray*}
}

\begin{proof}
Rescaling as necessary, we assume $\sigma_{x}=\sigma_{y}=1$. Let $\mathcal{A}_{1}$, be a $1/3$-cover of $\mathbf{v}_1=\{ \mathbf{v} \in\mathbb{R}^{m},\left\Vert v\right\Vert \le1\}$;
it is known that $\left|\mathcal{A}_{1}\right|\le9^{2m}$, and for
each $\mathbf{v}$, there is a $u(\mathbf{v})\in\mathcal{A}_{1}$ such that $\left\Vert \Delta(\mathbf{v})\right\Vert \triangleq\left\Vert \mathbf{v}-u(\mathbf{v})\right\Vert \le\frac{1}{3}$.
Similarly we can find a $1/3$-cover $\mathcal{A}_{2}$ of $\mathbf{v}_2=\{\mathbf{v} \in\mathbb{R}^{k},\left\Vert \mathbf{v} \right\Vert \le1\}$
with $\left|\mathcal{A}_{2}\right|\le9^{2k}$. Defining $\Phi(\mathbf{v}_1,\mathbf{v}_2)=\mathbf{v}_1^{\top}\left(Y^{\top}X-\mathbb{E}\left[Y^{\top}X\right]\right)\mathbf{v}_2$, then
\begin{eqnarray*}
\sup_{\mathbf{v}_1\in \mathbf{v}_1,\mathbf{v}_2\in \mathbf{v}_2}\left|\Phi(\mathbf{v}_1,\mathbf{v}_2)\right| & \le & \max_{u_{1}\in\mathcal{A}_{1},u_{2}\in\mathcal{A}_{2}}\left|\Phi(u_{1},u_{2})\right| + \sup_{\mathbf{v}_1\in \mathbf{v}_1,\mathbf{v}_2\in \mathbf{v}_2}\left|\Phi(\Delta(\mathbf{v}_1),u(\mathbf{v}_2))\right| \\
&& + \sup_{\mathbf{v}_1\in \mathbf{v}_1,\mathbf{v}_2\in \mathbf{v}_2}\left|\Phi(u(\mathbf{v}_1),\Delta(\mathbf{v}_2))\right|+\sup_{\mathbf{v}_1\in \mathbf{v}_1,\mathbf{v}_2\in \mathbf{v}_2}\left|\Phi(\Delta(\mathbf{v}_1),\Delta(\mathbf{v}_2))\right|.
\end{eqnarray*}
Becaue $3\Delta(\mathbf{v}_1),u(\mathbf{v}_1)\in \mathbf{v}_1$, and $3\Delta(\mathbf{v}_2),u(\mathbf{v}_2)\in \mathbf{v}_2$,
it follows that 
\begin{eqnarray*}
\sup_{\mathbf{v}_1\in \mathbf{v}_1,\mathbf{v}_2\in \mathbf{v}_2}\left|\Phi(\mathbf{v}_1,\mathbf{v}_2)\right| & \le & \max_{u_{1},u_{2}\in\mathcal{A}}\left|\Phi(u_{1},u_{2})\right|+\left(\frac{1}{3}+\frac{1}{3}+\frac{1}{9}\right)\sup_{\mathbf{v}_1\in \mathbf{v}_1,\mathbf{v}_2\in \mathbf{v}_2}\left|\Phi(\mathbf{v}_1,\mathbf{v}_2)\right|,
\end{eqnarray*}
hence $\sup_{\mathbf{v}_1\in \mathbf{v}_1,\mathbf{v}_2\in \mathbf{v}_2}\left|\Phi(\mathbf{v}_1,\mathbf{v}_2)\right|\le\frac{9}{2}\max_{u_{1},u_{2}\in\mathcal{A}}\left|\Phi(u_{1},u_{2})\right|.$
Using the last lemma and a union bound, we obtain
\[
\mathbb{P}\left(\frac{9}{2}\max_{u_{1},u_{2}\in\mathcal{A}}\left|\Phi(u_{1},u_{2})\right|\ge t\right)\le9^{2k+2m}\cdot\exp\left(-cn\min\left\{ t^{2},t\right\} \right)\le\exp\left(-cn\min\left\{ t^{2},t\right\} +6(k+m)\right).
\]
 
\end{proof}

\bibliographystyle{plain}
\bibliography{ompeivbib}

\end{document}